\numberwithin{equation}{section}
\providecommand{\abs}[1]{\left\vert#1\right\vert}
\providecommand{\pnorm}[2]{\left\Vert#1\right\Vert_{L^{#2}}}
\providecommand{\pnormspace}[3]{\left\Vert#1\right\Vert_{L^{#2}(#3)}}
\providecommand{\pqnorm}[3]{\left\Vert#1\right\Vert_{L^{#2,#3}}}
\providecommand{\pqnormspace}[4]{\left\Vert#1\right\Vert_{L^{#2,#3}(#4)}}
\providecommand{\Rn}[1]{\mathbb{R}^{#1}}
\providecommand{\wnorm}[1]{\lvert \mspace{-1.8mu} \lvert \mspace{-1.8mu} \lvert #1 \rvert \mspace{-1.8mu} \rvert \mspace{-1.8mu} \rvert_{L^{2,\infty}}}
\def\({\left(}
\def\){\right)}
\def\l|{\left|}
\def\r|{\right|}
\def\ep{\varepsilon}
\def\mr{\mathbb{R}}
\def\mz{\mathbb{Z}}
\def\si{\mathbb{S}^1}
\def\p{\partial}
\def\lep{\abs{\mathrm{log }\ \ep}}
\def\nab{\nabla}
\def\om{\Omega}
\def\io{\int_{\Omega}}
\def\bo{\partial \Omega}
\def\hal{\frac{1}{2}}
\def\ro{\rho}
\def\lti{L^{2,\infty}}
\def\dist{\mathrm{dist}}
\DeclareMathOperator{\curl}{curl}
\newtheorem{lem}{Lemma}[section]
\newtheorem{cor}[lem]{Corollary}
\newtheorem{prop}[lem]{Proposition}
\newtheorem{thm}{Theorem}
\title{Lorentz Space Estimates for the Ginzburg-Landau Energy}
\author{Sylvia Serfaty\footnote{Supported by NSF CAREER grant \# DMS0239121 and a
Sloan Foundation Fellowship }  and Ian Tice\footnote{Supported by
an NSF Graduate Research Fellowship}\\
{\small Courant Institute of Mathematical Sciences}\\
{\small 251 Mercer St., New York, NY 10012}\\
{\small\tt serfaty@cims.nyu.edu, tice@cims.nyu.edu}}
\begin{document}

\maketitle
\begin{abstract}
In this paper we prove novel lower bounds for the Ginzburg-Landau
energy with or without magnetic field. These bounds rely on an
improvement of  the ``vortex balls construction'' estimates by
extracting a new positive term in the energy lower bounds.
 This extra term
can be conveniently estimated through a Lorentz space norm, on which it thus
 provides an upper bound.  The Lorentz space $L^{2,\infty}$ we use is
critical with respect to the expected vortex profiles and can
serve to estimate the total number of vortices and get improved
convergence results.
\end{abstract}


\section{Introduction}

\subsection{Motivation}In this paper we consider the  Ginzburg-Landau ``free
energy"
\begin{equation}
\label{glfree} F_\ep(u,A)= \hal \io\abs{\nab_A u}^2 + \abs{\curl A}^2 +
\frac{(1-|u|^2)^2}{2\ep^2}.
\end{equation}
Here $\Omega$ is a bounded regular  two dimensional domain  of
$\mr^2$, $u $ is a complex-valued function, and $A\in \mr^2$ is a
vector field in $\om$. This functional is the free energy of the model of
superconductivity developed by Ginzburg and Landau.
 In the model, $A$ is the vector-potential of
the magnetic field, the function $h:=\curl A=\p_1 A_2 - \p_2 A_1$
is the induced magnetic field, and the complex-valued function $u$
is the ``order parameter" indicating the local state of the
material (normal or superconducting): $|u|^2$ is the local density
of superconducting electrons. The notation $\nab_A $ refers to the
covariant gradient, which acts according to $\nab_A u = (\nab - i A) u$.

We are interested in the regime of small $\ep$: $\ep$ corresponds
to a material constant, and small $\ep$ implies type-II
superconductivity.  In this regime, $u$ (because it  is complex-valued)
can have zeroes  with a nonzero topological degree.  These defects
are called the {\it vortices} of $u$ and are the crucial
objects of interest.

By setting $A \equiv 0$ we are led to studying the simpler
Ginzburg-Landau energy ``without magnetic field":
\begin{equation}
\label{eep} E_\ep(u)= \hal \io \abs{\nab u}^2 +
\frac{(1-|u|^2)^2}{2\ep^2}.
\end{equation}
All our results will thus apply to this energy as well, by setting
$A\equiv 0$.

These functionals, and in particular the vortices arising in their
minimizers or critical points, have been studied intensively in
the mathematics literature. We refer in particular to the books
\cite{bbh} for $E_\ep$ and \cite{ss_book} for the functional with
magnetic field. The interested reader can find there more
information on the  physical and mathematical  background.\medskip

 We are interested in proving lower bounds on $F_\ep$, and in
particular estimates which relate $F_\ep(u,A)$ and $\pqnorm{\nabla_A u}{2}{\infty}$, the norm of $\nab_A u$ in the
Lorentz space $\lti$. Noticeably, Lorentz spaces were already used
in the context of the  Ginzburg-Landau energy by Lin and
Rivi\`ere in \cite{lr}. Their goal there was to  study energy
critical points in 3 dimensions, but what they used was interpolation ideas
 and the duality between Lorentz spaces $L^{2,1}$ and $\lti $.

The Ginzburg-Landau energy is generally unbounded as $\ep \to 0$;
 it blows up roughly like $\pi n \lep$, where $n$ is the
number (or total degree) of vortices. Our investigation of
estimates for $\pqnorm{\nabla_A u}{2}{\infty}$ is thus part of a quest for
intrinsic quantities in $\nab_A u$ which do not blow up as $\ep \to
0$, but rather remain of the order of $n$.

\subsection{Heuristics for idealized vortices}\label{idealv}

Let us now try to explain the interest and relevance of the
Lorentz space $\lti$ for this problem. The  space $\lti$, also
known as  ``weak-$L^2$", is a functional space
which is just ``slightly larger" than the Lebesgue space $L^2$.
One simple way of defining the $\lti$ norm is by
\begin{equation}
\label{ltinorm1} \pqnorm{f}{2}{\infty} = \sup_{\abs{E}<\infty}
\abs{E}^{-\hal} \int_E \abs{f(x)}\, dx,
\end{equation}
where $\abs{E}$ denotes the Lebesgue measure of $E$. An
equivalent way is through the super-level sets of $f$:
\begin{equation} \label{ltinorm2} \pqnorm{f}{2}{\infty}=\sup_{t>0} t \lambda_f(t)^\hal,\end{equation}
 where
$\lambda_f(t) = \abs{\{ x\in \om \;\vert\; |f(x)|>t\}}.$ For more
information on Lorentz spaces we refer for example to
\cite{grafakos,stein_intro}. A simple application of the
Cauchy-Schwarz inequality in (\ref{ltinorm1}) allows to check that
if $f$ is in $L^2$ then it is in $\lti$ with
$\pqnorm{f}{2}{\infty} \le \pnorm{f}{2}$.\medskip

Let us now consider vortices of  a complex-valued function $u$ in
the context of Ginzburg-Landau.  In the regime of small $\ep$, $u$
can have zeroes, but because  of the strong penalization of the term $\io
 (1-\abs{u}^2)^2$, $\abs{u}$ can be small only in (small) regions of
characteristic size $\ep$.

 Then around a zero at a point $x_0$,
  $u$ has a degree defined as the
 topological degree of $u/\abs{u}$ as a map from a circle to $\si$, or in other words
\begin{equation}
\label{degredefintro} d= \frac{1}{2\pi} \int_{\p B(x_0,r)}
\frac{\p }{\p \tau}\( \frac{u}{\abs{u}}\)\in \mz,\end{equation} where $r$ is
sufficiently small. One can describe the situation very roughly as
follows: $\abs{u}$ is small in a ball of radius $C\ep$, and $\abs{u}\approx
1$ outside of this ball, say in an annulus $B(x_0,R) \backslash
B(x_0, C\ep)$. The size of $R$ is meant to account for possible
neighboring zeroes. In this annulus, the model case is that of a
radial vortex of degree $d$, i.e \begin{equation}
\label{vortexradial} u(r, \theta) = f(r)e^{i d
\theta},
\end{equation}
where $(r,\theta)$ are the polar coordinates
centered at $x_0$, and $f$ is a real-valued function, close to $1$
in $B(x_0,R) \backslash B(x_0, C \ep)$. When computing the $L^2 $
norm of $\nab u$, we find that $\abs{\nab u}\approx \frac{\abs{d}}{r} $ in
the annulus and thus, using polar coordinates, \begin{eqnarray}
\nonumber
 \pnormspace{\nabla u}{2}{B(x_0,R)}^2 &\ge &  \int_{B(0,R) \backslash
B(0,C\ep)} \abs{\frac{d}{r}}^2  = \int_{C\ep}^R  \frac{2\pi  d^2}{r} \, dr\\
\label{l2norm} &\ge & 2\pi d^2 \log \frac{R}{C\ep}.\end{eqnarray}
This tells us that the (square of the) $L^2 $ norm of $\nab u$
blows up like $2\pi d^2 \lep$ as $\ep \to 0$. This is a crucial
fact in the analysis of Ginzburg-Landau, much used since
\cite{bbh}. Jerrard \cite{j} and Sandier \cite{sa} showed that
this picture is actually accurate even for arbitrary
configurations: without assuming that the vortex profile is
radial, the inequality (\ref{l2norm}) still holds (the radial
profile is actually the one that is minimal for the $L^2$ norm).
Moreover, any configuration with an  arbitrary number of vortices
can be understood as many such annuli, possibly at very close
distance to each other, glued together.  Good lower bounds like
(\ref{l2norm}) can be added up together by keeping annuli with the
same conformal type. This was the basis of the ``vortex-balls
construction" that they formulated and  which was used extensively
to understand Ginzburg-Landau minimizers, in particular in
\cite{ss_book}.\medskip

On the other hand, let us calculate (roughly) the $\lti$ norm of
$\nab u$ for the above vortex. We recall that $\abs{\nab u}\approx
\frac{\abs{d}}{r}$ in the annulus $B(x_0,R) \backslash B(x_0, C\ep)$.
Using the definition (\ref{ltinorm2}), we have $ \abs{\nab u}>t$ if
and only if $ r<\abs{d}/t $. Thus
$$\lambda_{\abs{\nab u} }(t)\approx \pi d^2/t^2, $$ and we find
\begin{equation}
\label{ltinormvid} \|\nab u\|_{\lti (B(x_0,R) \backslash
B(x_0,C\ep) ) } \approx \sqrt{\pi} |d|.\end{equation} So in contrast,
the $\lti$ norm  of $\nab u$ {\it does not blow up} as $\ep \to
0$. One can see that this space is critical in the sense that
$1/\abs{x}$ (barely) fails to be in  $L^2 $  or in $L^{2,q}$ for any $q<\infty$ (its norm blows up logarithmically in all cases)
 but  is in $\lti$ and in all $L^p$ for $p<2$.

Moreover, from this formula (\ref{ltinormvid}), it is expected that the $L^{2,\infty}$ norm
can serve to estimate the total degree $\sum \abs{d_i}$ of all the
vortices of a configuration. This is convenient since the total
degree $\sum \abs{d_i}$ is generally obtained via a ``ball
construction" that is nonunique. On the other hand $\pqnorm{\nabla u}{2}{\infty}$ provides a unique and intrinsic quantity useful to
evaluate the number  of vortices.\medskip

Because of these remarks and because of the paper \cite{lr},
it could be expected that  Lorentz spaces are a suitable functional setting in which to study Ginzburg-Landau vortices.
One may point out that there are other spaces that
would be critical for  the profile $1/\abs{x}$, such as Besov spaces;
however, it seems difficult to find an effective way of using them in
connection with the Ginzburg-Landau  energy.

 The main goal of our results is to  give a rigorous
basis to the above observations. The connection with the  Lorentz
norm of $\nab u$ is made through the ``vortex-balls
construction" of Jerrard and Sandier, as formulated in
\cite{ss_book}. Our estimates will in fact provide an improvement
of these lower bounds by adding an extra positive term in the
lower bounds, which is then related to the Lorentz norm. Just as
in  the ball construction method, one of the interests of the
result is that it is valid under very few assumptions: only a very
weak upper bound on the energy, even when $u$ has a large number
of vortices, unbounded as $\ep \to 0$. This creates serious
technical difficulties but is important since such situations
occur for energy minimizers when there is a large applied magnetic
field, as proved in \cite{ss_book}.

\subsection{Main results}
Let us point out that the estimates we prove are   not on the
Lorentz norm of $\nab u$ but rather on that of $\nab_A u$. The
reason is that the energy $F_\ep$ is {\it gauge-invariant} : it
satisfies  $F_\ep(u,A) = F_\ep(u e^{i\Phi}, A + \nab \Phi)$ for
any smooth function $\Phi$. Thus the quantity $\abs{\nab u}$ is not a
gauge-invariant quantity, hence not an intrinsic physical
quantity. This is why it is  replaced by the gauge-invariant
``covariant derivative" $\abs{\nab_A u}$.

Our method consists in proving the following improvement of the
``ball construction" lower bounds (see \cite{ss_book}, Chapter 4):
\begin{thm}[Improved lower bounds]
\label{energy_bound} Let $\alpha \in (0,1)$.  There exists
$\varepsilon_0 >0$ (depending on $\alpha$) such that for
$\varepsilon \le \varepsilon_0$ and $u, A$ both $C^1$ such that
$F_{\varepsilon}(\abs{u},\Omega) \le \varepsilon^{\alpha-1}$,  the
following hold.

For any $1 > r > C \varepsilon^{\alpha/2}$, where $C$ is a
universal constant, there exists a finite, disjoint collection of
closed balls, denoted by $\mathcal{B}$, with the following
properties.

1. The sum of the radii of the balls in the collection is $r$.

2.  Defining $\Omega_\ep= \{ x \in \Omega \, | \,\dist(x, \bo) >
\ep\}$, we have

$\{x\in \Omega_{\varepsilon} \; | \; \abs{u(x)-1} \ge \delta \}
\subset V := \Omega_{\varepsilon} \cap \left(\cup_{B\in
\mathcal{B}} B\right)$, where $\delta = \varepsilon^{\alpha/4}$.

3. We have
\begin{equation}\label{e_b_15}
\begin{split}
&\frac{1}{2} \int_V \abs{\nabla_A u}^2 + \frac{1}{2\varepsilon^2}(1-\abs{u}^2)^2 + r^2(\curl{A})^2 \\
& \ge \pi D \left( \log{\frac{r}{\varepsilon D}} -C  \right) +
\frac{1}{18} \int_{V} \abs{\nabla_{A+G} u}^2 +
\frac{1}{2\varepsilon^2}(1-\abs{u}^2)^2,
\end{split}
\end{equation}
where $G$ is some explicitly constructed vector field, $d_B$
denotes $\deg(u, \p B)$ if $B \subset \Omega_\varepsilon$ and $0$
otherwise,
\begin{equation*}
D = \sum_{\substack{B\in \mathcal{B} \\ B \subset
\Omega_\varepsilon}} \abs{d_B}
\end{equation*}
is assumed to be nonzero, and $C$ is universal.
\end{thm}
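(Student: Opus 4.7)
My starting point is the classical vortex ball construction from \cite{ss_book}, Chapter 4, applied to the configuration $(u,A)$. This already produces a disjoint family $\mathcal{B}$ of closed balls satisfying properties 1 and 2, together with the baseline lower bound
\begin{equation*}
\frac{1}{2}\int_V |\nab_A u|^2 + \frac{1}{2\ep^2}(1-|u|^2)^2 + r^2(\curl A)^2 \geq \pi D\Bigl(\log\frac{r}{\ep D} - C\Bigr).
\end{equation*}
The task is therefore to identify a reserve inside the Sandier--Jerrard annular integration that can be extracted as the new positive contribution $\frac{1}{18}\int_V|\nab_{A+G}u|^2 + \frac{1}{2\ep^2}(1-|u|^2)^2$, for an explicit vector field $G$.

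\textbf{Construction of $G$.} For each $B\in\mathcal{B}$ with $B\csubset\om_\ep$ and $d_B\neq 0$, let $x_B$ denote its center, $(\rho_B,\theta_B)$ polar coordinates around $x_B$, and $\chi_B$ a smooth cutoff equal to $1$ on $B$ outside a core of radius comparable to $\ep$ and vanishing outside $B$. Define
\begin{equation*}
G_B := d_B\, \chi_B\, \nab \theta_B, \qquad G := \sum_{B\in\mathcal{B}} G_B.
\end{equation*}
The motivation comes directly from Subsection \ref{idealv}: for the model vortex $u=f(\rho)e^{id\theta}$ one computes $\nab u = f'(\rho)\hat\rho\,e^{id\theta} + i(d/\rho)\hat\theta\, u$, so $\nab_{G}u = \nab u - iGu$ collapses to the purely radial residue $f'\hat\rho\,e^{id\theta}$, which is small away from the core. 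Thus $\nab_{A+G}u$ is an intrinsic measure of a configuration's deviation from an idealized vortex, and crucially $G$ is tangential to every circle centered at $x_B$.

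\textbf{Algebraic decomposition and refined annular bound.} From $\nab_A u = \nab_{A+G}u + iGu$ one has pointwise
\begin{equation*}
|\nab_A u|^2 = |\nab_{A+G}u|^2 + 2\,\mathrm{Re}\bigl(\overline{\nab_{A+G}u}\cdot iGu\bigr) + |G|^2|u|^2.
\end{equation*}
On a circle $\p B(x_B,t)$ inside $B$ and outside the core, $|G|^2|u|^2$ integrates to $\approx 2\pi d_B^2/t$, supplying the logarithmic main term, while the normal component of the cross term vanishes since $G\cdot n = 0$. My plan is to split $\frac{1}{2}|\nab_A u|^2$ into a large part fed to the usual Sandier--Jerrard argument (which reproduces the full main term and preserves the potential) and a small reserved slice (of weight $\sim 1/9$) combined with a Young inequality to dominate $\frac{1}{18}|\nab_{A+G}u|^2$ after absorbing the cross term. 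Tangentiality of $G$ is what allows the Young split to close: the normal component of $\nab_{A+G}u$ equals that of $\nab_A u$ and is entirely unconsumed by the tangential-$L^2$ estimate driving the standard ball growth, so it is ``free'' for the improvement. Summing the annular improvement along the growing ball family, with $\sum\mathrm{rad}(B) = r$ as in property 1, yields \eqref{e_b_15}. Inside the cores, where $|u|$ may drop to $\delta=\ep^{\alpha/4}$, the retained potential term on the right-hand side absorbs the uncontrolled contribution of $\nab_{A+G}u$, thanks to the choice of threshold.

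\textbf{Main obstacle.} The sharpest technical point is to preserve simultaneously (i) the constant $\pi$ in front of the main logarithm, (ii) a strictly positive coefficient in front of $\int|\nab_{A+G}u|^2$, and (iii) an unabsorbed copy of $\frac{1}{2\ep^2}(1-|u|^2)^2$ on the right-hand side, since a naive pointwise Young inequality forces one of these coefficients to collapse to zero or change sign. The resolution is precisely the interplay between the tangentiality of $G$ and reserving only a tiny fraction of $|\nab_A u|^2$ for the residual estimate. Secondary but nontrivial difficulties arise from the weak upper bound $F_\ep(|u|,\om) \leq \ep^{\alpha-1}$, which permits $D\to\infty$ as $\ep\to 0$: every error must be quantified in $\ep$ and $\alpha$, and the thresholds $\delta=\ep^{\alpha/4}$ together with $r > C\ep^{\alpha/2}$ are calibrated so that the baseline ball construction still applies and the new term survives with a uniform constant.
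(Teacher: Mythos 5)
Your sketch correctly identifies the two guiding ideas — introduce an auxiliary tangential field $G$ so that $\nab_{A+G}u$ measures the deviation from an idealized vortex, and extract a reserve from the ball construction lower bounds — but the construction of $G$ you propose is not the one that works, and the ``Young inequality'' mechanism you invoke to close the estimate cannot give the stated constants.

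\textbf{The field $G$ must be built on the annuli of the ball growth, not on the final balls.} You define $G_B = d_B\,\chi_B\,\nab\theta_B$ with $\theta_B$ the angle around the center $x_B$ of a \emph{final} ball $B\in\mathcal{B}$. This fails whenever the vortex structure inside $B$ is not a single symmetric vortex at $x_B$, which is the generic case. Take $B$ to contain a single degree-one zero at a point $a$ with $\abs{a - x_B}\sim r(B)/2$. Then near $a$ (outside an $\ep$-core) one has $\nab u \approx i\,\nab\theta_a\,u$, which blows up like $1/\abs{x-a}$, while your $G$ stays of order $1/r(B)$; hence $\abs{\nab_{A+G}u}^2\gtrsim 1/\abs{x-a}^2$ there and $\int_B\abs{\nab_{A+G}u}^2\gtrsim \pi\lep$. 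But for a minimizer with $D=1$ the left side of \eqref{e_b_15} is $\pi\lep+O(1)$, so the claimed inequality with coefficient $1/18$ is violated. The same problem occurs a fortiori when $B$ is the result of merging several vortices: there is simply no single-centered field that compensates $\nab_A u$ on the union of the annuli that were traversed before the merging. The paper's $G$ in \eqref{G_def} is defined on \emph{every annulus} of the ball growth, centered at the center of the \emph{intermediate} ball at each time $t$, with strength $d_B\beta_B$ chosen per annulus, and set to zero in the non-annular (merging) regions and in the initial balls. Extracting the new positive term is therefore done \emph{during} the growth, not post-hoc from the final collection; this is precisely Proposition \ref{low_growth} coupled to Lemma \ref{function_balls}.

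\textbf{The cross term cannot be absorbed by a Young inequality.} Writing $\abs{\nab_A u}^2 = \abs{\nab_{A+G}u}^2 + 2\operatorname{Re}(\overline{\nab_{A+G}u}\cdot iGu) + \abs{G}^2\abs{u}^2$ and trying to dominate the middle term by $\epsilon\abs{\nab_{A+G}u}^2 + \epsilon^{-1}\abs{G}^2\abs{u}^2$ cannot preserve the constant $\pi$ in front of the logarithm: the $\abs{G}^2$ term is exactly the quantity that produces $\pi d_B^2\log(\cdot)$, and scaling it by $\epsilon^{-1}$ destroys the constant. The paper's Lemma \ref{low_boundary} avoids this entirely: on a circle $\partial B(a,r)$ the cross term $\int_{\partial B} G\cdot(\nab\varphi - A)$ is computed \emph{exactly} via $\int_{\partial B}\nab\varphi\cdot\tau = 2\pi d_B$ and $\int_{\partial B}A\cdot\tau = \int_B\curl A$, leading to the identity \eqref{l_b_3} and then a clean minimization in $X=\int_B\curl A$. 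Tangentiality of $G$ is used to make the line integrals computable, not to kill a Young error.

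Finally, the proposal does not address how to pass from the $\mathbb{S}^1$-valued bound (on $v=u/\abs{u}$) to the bound for the full $u$ without losing a copy of the potential term: the paper does this with the co-area formula and integration by parts (Lemma \ref{co_area_subset}, the $A_1,A_2,A_3$ and $B_1,B_2,B_3$ splitting in \eqref{e_b_3}--\eqref{e_b_003}), which is where the threshold $\delta=\ep^{\alpha/4}$ and the constant $1/18$ actually come from. As written, the proposal asserts the result rather than proving it, and the two concrete mechanisms it leans on (per-final-ball $G$, Young absorption) would each fail.
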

The improvement with respect to Theorem 4.1 in \cite{ss_book} is
the addition of the extra term $\frac{1}{18}\int\abs{\nab_{A+G}
u}^2$. The term $G$ is a  vector-field constructed in the course
of the ball construction, which essentially compensates for the
expected behavior of  $\nab_A u$ in the vortices.  One can take it
to be $\tau d/r$ in every annulus of the ball construction  where
$u$ has a constant  degree $d$,  $\tau $ denotes the unit tangent
vector to each circle centered at $x_0$, the center of the
annulus, and $r= \abs{x-x_0}$. By extending $G$ to be zero outside
of the union of balls $V$, we easily deduce:
\begin{cor} Let $(u,A)$ be as above, then
\begin{equation}
\label{vjs} \io |\nab_A u- iG u|^2 \le C \( F_\ep(u,A) - \pi D
\log \( \frac{r}{\ep D}  - C\) \)
\end{equation}
 where $G$ is the explicitly constructed vector field of Theorem
 \ref{energy_bound},  and  $C$ a universal constant.\end{cor}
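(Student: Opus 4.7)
The plan is to deduce the corollary directly from Theorem~\ref{energy_bound} by exploiting that $G$ vanishes outside $V$ and the algebraic identity $\nab_{A+G}u = (\nab - i(A+G))u = \nab_A u - iGu$. In particular the left-hand side of the corollary is exactly $\int_\Omega|\nab_{A+G}u|^2$, and since $G\equiv 0$ on $\om\setminus V$ I can split
\[
\io |\nab_A u - iGu|^2 = \int_V |\nab_{A+G}u|^2 + \int_{\om\setminus V} |\nab_A u|^2,
\]
and handle each region separately. The second integral is bounded trivially by $\int_\Omega|\nab_A u|^2 \le 2F_\ep(u,A)$ from the definition of $F_\ep$.

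For the first integral, I rearrange (\ref{e_b_15}). Cancelling the two copies of $\frac{1}{2\ep^2}(1-|u|^2)^2$ that appear on both sides yields
\[
\frac{1}{18}\int_V|\nab_{A+G}u|^2 + \pi D\!\left(\log\tfrac{r}{\ep D} - C\right) \le \frac{1}{2}\int_V|\nab_A u|^2 + r^2\int_V(\curl A)^2.
\]
Using $r<1$ and the elementary estimates $\int_\Omega|\nab_A u|^2 \le 2F_\ep$ and $\int_\Omega(\curl A)^2 \le 2F_\ep$ coming straight from the definition of $F_\ep$, I conclude that
\[
\int_V|\nab_{A+G}u|^2 \le C_1\, F_\ep(u,A) - 18\pi D\!\left(\log\tfrac{r}{\ep D} - C\right)
\]
for a universal constant $C_1$. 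Adding the $\om\setminus V$ bound preserves this shape.

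To repackage the result into the compact form $C(F_\ep - \pi D(\log(r/(\ep D)) - C))$ claimed by the corollary, I invoke a second consequence of (\ref{e_b_15}): dropping the nonnegative term $\tfrac{1}{18}\int_V|\nab_{A+G}u|^2$ from the rearranged inequality gives the a priori bound $\pi D(\log(r/(\ep D)) - C) \le C_2\, F_\ep(u,A)$ for a universal $C_2$. Using this bound to rebalance the coefficients in front of $F_\ep$ and $\pi D(\log - C)$ allows me to pull out a common factor at the expense of choosing a sufficiently large universal $C$.

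The only real subtlety is the bookkeeping in this final rebalancing step: when $\pi D(\log(r/(\ep D)) - C)$ is positive, the right-hand side of the corollary is strictly smaller than $C F_\ep$ alone, and it is the a priori bound supplied by the theorem that keeps the inequality consistent. Apart from this, the proof is nothing more than algebraic rewriting of (\ref{e_b_15}) together with the covariant-derivative identity $\nab_{A+G}u = \nab_A u - iGu$; no further analytic input is required beyond Theorem~\ref{energy_bound}.
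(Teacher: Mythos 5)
Your opening moves are correct: the identity $\nab_{A+G}u = \nab_A u - iGu$, the splitting of $\io|\nab_A u - iGu|^2$ into the contributions from $V$ and from $\om\setminus V$ (using $G\equiv 0$ outside $V$), and the rearrangement of \eqref{e_b_15} to isolate $\frac{1}{18}\int_V|\nab_{A+G}u|^2$. But the final ``rebalancing'' step does not actually go through. After adding the crude bound $\int_{\om\setminus V}|\nab_A u|^2 \le 2F_\ep$, you are left with an estimate of the form $C_1 F_\ep - 18\pi D(\log\frac{r}{\ep D}-C)$ with $C_1 > 18$, and you need to turn this into $C'(F_\ep - \pi D(\log\frac{r}{\ep D}-C''))$. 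The a priori bound you propose, $\pi D(\log\frac{r}{\ep D}-C)\le C_2 F_\ep$, holds only with $C_2 = 1$ (it comes from dropping the nonnegative $G$-term in \eqref{e_b_15}, which shows $\pi D(\log-C)\le \frac{1}{2}\int_V[\ldots]\le F_\ep$). This gives $F_\ep - \pi D(\log-C)\ge 0$ but nothing better: for near-minimizers with $F_\ep\approx\pi D\lep$, the excess $F_\ep - \pi D(\log-C)$ is $O(D\log D)$ while $F_\ep$ is $O(D\lep)$, so $2F_\ep$ is \emph{not} controlled by any universal multiple of $F_\ep - \pi D(\log-C)$. No choice of universal constants $C',C''$ closes the gap.

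The fix is to not discard the slack between $F_\ep$ and the energy contained in $V$. Rearranging \eqref{e_b_15} gives
\[
\frac{1}{18}\int_V|\nab_{A+G}u|^2 + \pi D\Bigl(\log\tfrac{r}{\ep D}-C\Bigr) \le \frac{1}{2}\int_V|\nab_A u|^2 + \frac{1}{4\ep^2}\int_V(1-|u|^2)^2 + \frac{r^2}{2}\int_V(\curl A)^2.
\]
Now add $\frac{1}{2}\int_{\om\setminus V}\bigl[|\nab_A u|^2 + \frac{1}{2\ep^2}(1-|u|^2)^2 + (\curl A)^2\bigr]$ to both sides. Since $r<1$ and $V\subset\om$, the right-hand side becomes $\le F_\ep(u,A)$, while on the left $\frac{1}{2}\int_{\om\setminus V}|\nab_A u|^2 \ge \frac{1}{18}\int_{\om\setminus V}|\nab_{A+G}u|^2$ (recall $G=0$ there). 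This yields directly $\frac{1}{18}\io|\nab_{A+G}u|^2 \le F_\ep(u,A) - \pi D(\log\frac{r}{\ep D}-C)$, which is \eqref{vjs} with a single pass of the inequality and no rebalancing. In other words, the energy outside $V$ must be charged to the energy-\emph{excess} rather than to the total energy; bounding it by $2F_\ep$ and then trying to reabsorb is exactly what loses the factor.
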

 The right-hand side of
this inequality can be considered as the ``energy-excess",
difference between the total energy and the expected vortex energy
provided by the ball construction lower bounds.  Thus we control
$\io |\nab_A u -iG u|^2$ by the energy-excess. This fact is used 
repeatedly in the sequel paper \cite{p2} to better understand the
behavior of $\nab_A u$ for minimizers and almost minimizers of the
Ginzburg-Landau  energy with applied magnetic field.

One can also note that such a control (\ref{vjs}) has a similar
flavor to a result of Jerrard-Spirn \cite{jspirn} where they
control the difference (in a weaker norm but with better control) of the Jacobian
of $u$ to a measure  of the form $\sum d_i \delta_{a_i}$ by the
energy-excess. \medskip

Once Theorem \ref{energy_bound} is proved, we turn to obtaining an
$\lti $ estimate  from which $G$ has disappeared. In order to do
so,  we can bound below $\pnorm{\nab_{A+G} u}{2}$ by
$\pqnorm{\nabla_{A+G} u}{2}{\infty}$; the more delicate task is
then to control $\pqnorm{G}{2}{\infty}$ in a way that only depends
on the final data of the theorem, that is on the degrees of the
final balls constructed above and on the energy. This task is
complicated by the possible presence of large numbers of vortices
very close to each other, and compensations of vortices of large
positive degrees with vortices of large negative degrees. To
overcome this, $G$ is not defined exactly as previously said, but
in a modified way, and $\pqnorm{G}{2}{\infty}$ is controlled not
only through the degrees but also through  the total energy.

We then arrive at the following main result :
\begin{thm}[Lorentz norm bound]
\label{norm_switch} Assume the hypotheses and results of Theorem
\ref{energy_bound}.  Then there exists a universal constant $C$
such that
\begin{multline}\label{n_s_1}
\frac{1}{2} \int_V \abs{\nabla_A u}^2 +
\frac{(1-\abs{u}^2)^2}{2\varepsilon^2}
 + r^2(\curl{A})^2 + \pi \sum  \abs{d_{B}}^2\\
\ge C \pqnormspace{\nabla_A u}{2}{\infty}{V}^2 + \pi \sum \abs{d_B}
\left( \log{\frac{r}{\varepsilon \sum \abs{d_B}}} -C  \right),
\end{multline}
where the sums are taken over all the balls $B$ in the final
collection $\mathcal{B}$ that are included in $\Omega_\ep$.
\end{thm}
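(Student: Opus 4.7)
The plan is to start from Theorem \ref{energy_bound} and convert the extra positive term $\frac{1}{18}\int_V \abs{\nabla_{A+G} u}^2$ into a statement about $\pqnormspace{\nabla_A u}{2}{\infty}{V}$. Since $V\subset\Omega$ has finite Lebesgue measure, the elementary embedding $\pqnorm{f}{2}{\infty}\le \pnorm{f}{2}$ coming from Cauchy-Schwarz in (\ref{ltinorm1}) immediately gives
\[
\tfrac{1}{18}\int_V \abs{\nabla_{A+G} u}^2 \ \ge\ \tfrac{1}{18}\pqnormspace{\nabla_{A+G} u}{2}{\infty}{V}^2.
\]
So the substance of the theorem is to pass from $\nabla_{A+G}$ to $\nabla_{A}$ and to absorb the difference in terms of the degrees and energy.

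Next I would exploit the identity $\nabla_{A+G}u=\nabla_A u - iGu$ together with the quasi-triangle inequality for the Lorentz quasi-norm $\pqnorm{\cdot}{2}{\infty}$: there is a universal constant $K$ with $\pqnorm{f+g}{2}{\infty}\le K(\pqnorm{f}{2}{\infty}+\pqnorm{g}{2}{\infty})$. Writing $\nabla_A u=\nabla_{A+G}u+iGu$, squaring and using $(a+b)^2 \le 2a^2+2b^2$, I get
\[
\pqnormspace{\nabla_A u}{2}{\infty}{V}^2 \ \le\ 2K^2\bigl(\pqnormspace{\nabla_{A+G}u}{2}{\infty}{V}^2 + \pqnormspace{Gu}{2}{\infty}{V}^2\bigr).
\]
Rearranging and combining with the display above reduces Theorem \ref{norm_switch} to a bound of the form
\[
\pqnormspace{Gu}{2}{\infty}{V}^2 \ \le\ C\,\pi\sum\abs{d_B}^2 + (\text{terms absorbable by the energy on the LHS}).
\]

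The last estimate is where I expect the main difficulty to lie, and it is also where the text warns that $G$ must be defined in a modified way. Heuristically, on an annulus $B(x_0,R)\setminus B(x_0,C\ep)$ with a single vortex of degree $d$, the computation (\ref{ltinormvid}) gives $\pqnorm{G}{2}{\infty}\approx \sqrt{\pi}\abs{d}$, and since $\abs{u}\lesssim 1$ in $V$ away from a set of small measure, the factor $u$ is harmless for the $L^{2,\infty}$ estimate. Given disjoint annuli with degrees $d_i$, the super-level set $\{\abs{G}>t\}$ has measure at most $\sum_i \pi d_i^2/t^2$, so by (\ref{ltinorm2})
\[
\pqnormspace{G}{2}{\infty}{V}^2 \ \le\ \pi\sum_i d_i^2,
\]
which is precisely the $\pi\sum\abs{d_B}^2$ that appears on the left-hand side of (\ref{n_s_1}). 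The genuine obstacle is that, inside the final balls of $\mathcal{B}$, the ball-construction produces a nested family of annuli whose intermediate degrees need not match the final degree $d_B$, and large positive and negative subdegrees can coexist, so the naive choice of $G$ could have a Lorentz norm much larger than $\sum\abs{d_B}^2$. My strategy would therefore be to redefine $G$ only on those sub-annuli whose degree agrees (in sign and, essentially, in size) with $d_B$, and to estimate the missing pieces of $\nabla_{A+G}u$ by paying their $L^2$ cost against the remaining energy-excess in (\ref{e_b_15}). This is what allows $\pqnormspace{G}{2}{\infty}{V}$ to be controlled both by the final degrees and by the total energy, as announced in the discussion preceding the theorem, and this bookkeeping — not the Lorentz manipulations themselves — is what I expect to be the hard part.
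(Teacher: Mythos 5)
Your reduction step matches the paper exactly: the paper's proof (eq.\ \eqref{n_s_2}--\eqref{n_s_3}) also writes $\nabla_A u = \nabla_{A+G}u + iGu$, applies the triangle inequality for the Lorentz norm together with $\pqnorm{\cdot}{2}{\infty}\le\pnorm{\cdot}{2}$, uses $\abs{u}\le 3/2$ on the support of $G$ to pass from $\pqnorm{Gu}{2}{\infty}$ to $\pqnorm{G}{2}{\infty}$, and then invokes Theorem \ref{energy_bound} to trade $\pnormspace{\nabla_{A+G}u}{2}{V}^2$ for energy-excess. So far the two arguments coincide, up to constants and the fact that the paper's $\pqnorm{\cdot}{2}{\infty}$ is an honest norm (Lemma \ref{l2_inf_normability}), so your quasi-norm constant $K$ can be taken to be $1$.

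The gap is precisely the one you flag: you reduce the theorem to $\pqnormspace{G}{2}{\infty}{V}^2 \lesssim \pi\sum\abs{d_B}^2 + (\text{energy-excess})$, which is the paper's Proposition \ref{g_norm}, and this is the bulk of the proof — you do not carry it out, and the mechanism you sketch for it is not the one the paper uses. You propose to \emph{mask} $G$, setting it to zero on sub-annuli whose intermediate degree conflicts with $d_B$, and to pay for the discarded pieces of $\nabla_{A+G}u$ with energy-excess. That would force you to re-establish the lower bound \eqref{e_b_15} for a $G$ different from the one of Proposition \ref{final_balls_part_2}, and it is not at all obvious that the masked-out regions can be absorbed. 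The paper instead keeps $G$ supported on every annulus but \emph{rescales} it by constants $\beta_B\in(0,1]$ (eq.\ \eqref{f_b_15}), chosen via a transition time $t_{\bar B}$ determined by the $\eta$-threshold degree analysis of Lemma \ref{degree_analysis} and Corollary \ref{degree_analysis_firstgen}. Before the transition, $\beta_B=1$ and the Lorentz contribution of those early annuli (which can carry large cancelling $\pm$ degrees) is bounded directly by the energy-excess via the $\eta$-inequality \eqref{g_n_9}--\eqref{g_n_10}; after the transition, $\beta_B$ is tuned so that $\sum_B d_B^2\beta_B^2 = \abs{d_{\bar B}}$, and the remaining contribution is controlled by a \emph{counting} of effective-merging generations (step 4 of Proposition \ref{g_norm}), which bounds the number of relevant later annuli by $\frac{1+\eta}{1-\eta}\abs{D_n}$. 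This two-regime choice of $\beta_B$ is designed so that the energy lower bound of Proposition \ref{final_balls_part_2} \emph{and} the Lorentz bound hold simultaneously for the same $G$, which is what closes the argument. Your sketch identifies the right enemy (cancelling intermediate degrees) but neither of your two proposed tools — naive annulus-by-annulus Lorentz estimate, or masking plus $L^2$ absorption — by itself yields the needed $\sum\abs{d_B}^2$-type bound, and the paper's $\eta$-threshold bookkeeping is exactly what replaces them.
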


This theorem bounds below the energy contained in the union of
balls $V$
 in terms of the $\lti$ norm on $V$.  It is a simple matter to extend
  these estimates to all of $\Omega$, and deduce a control of the $\lti$ norm of $\nab_A u$
  by the energy-excess, plus the term $\sum \abs{d_B}^2$.
   This is the content of the following corollary.

\begin{cor}\label{coro1} Assuming the hypotheses and results of
Theorem \ref{energy_bound}, there exists a universal constant $C$
such that \begin{equation} \pqnormspace{\nabla_A u}{2}{\infty}{\Omega}^2 \le C \(
F_\ep(u,A) - \pi \sum \abs{d_B} \log \frac{r}{\ep \sum \abs{d_B}} + \sum
\abs{d_B}^2\),\end{equation}
where the sums are taken over all the balls $B$ in the final
collection $\mathcal{B}$ that are included in $\Omega_\ep$.
\end{cor}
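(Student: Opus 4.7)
The plan is to extend the localized estimate of Theorem \ref{norm_switch} from $V$ to all of $\Omega$ by combining subadditivity of the $\lti$ quasi-norm over disjoint sets with the trivial embedding $L^2 \hookrightarrow \lti$ on the complement. First I would record that for any measurable $f$ and any partition $\Omega = V \sqcup (\Omega\setminus V)$, the distribution function of $|f|$ splits additively across $V$ and $\Omega\setminus V$, so taking suprema in \eqref{ltinorm2} yields
\begin{equation}\label{subadd}
\pqnormspace{f}{2}{\infty}{\Omega}^2 \le \pqnormspace{f}{2}{\infty}{V}^2 + \pqnormspace{f}{2}{\infty}{\Omega\setminus V}^2.
\end{equation}
On the complementary region, the Cauchy-Schwarz bound $\pqnormspace{g}{2}{\infty}{E}\le \pnormspace{g}{2}{E}$ (immediate from \eqref{ltinorm1}) together with the definition of $F_\ep$ gives $\pqnormspace{\nab_A u}{2}{\infty}{\Omega\setminus V}^2 \le 2F_\ep(u,A)$.

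On $V$ itself I would rearrange Theorem \ref{norm_switch} so that $\pqnormspace{\nab_A u}{2}{\infty}{V}^2$ is isolated on the left. Since $r<1$, one has $\int_V r^2(\curl A)^2 \le \int_\Omega(\curl A)^2\le 2F_\ep(u,A)$, so the left-hand side of \eqref{n_s_1} is at most $CF_\ep(u,A)+\pi\sum \abs{d_B}^2$. Transferring the logarithmic term across produces
\begin{equation*}
C\pqnormspace{\nab_A u}{2}{\infty}{V}^2 \le CF_\ep(u,A) + \pi\sum \abs{d_B}^2 - \pi\sum \abs{d_B}\log\frac{r}{\ep\sum \abs{d_B}} + C\pi\sum \abs{d_B}.
\end{equation*}
Every ball contributing to the sum has integer degree $\abs{d_B}\ge 1$, so $\abs{d_B}\le \abs{d_B}^2$, and the last summand is absorbed into $\pi\sum \abs{d_B}^2$ after enlarging the universal constant.

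Plugging both the $V$-bound and the $\Omega\setminus V$-bound into \eqref{subadd} delivers the stated inequality. I do not anticipate any substantive obstacle here: the corollary is really a clean post-processing of Theorem \ref{norm_switch}. The only care required is in the bookkeeping which allows $\pi\sum\abs{d_B}$ to be absorbed into $\pi\sum\abs{d_B}^2$ and in observing that the constraint $r<1$ neutralizes the $r^2(\curl A)^2$ contribution.
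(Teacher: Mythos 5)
Your argument is correct and follows essentially the same route the paper takes. The paper proves the slightly sharper Corollary~\ref{norm_switch_domain} by adding $F_\ep^r(u,A,\Omega\setminus V)$ to both sides of \eqref{n_s_1}, using $\pqnormspace{g}{2}{\infty}{\Omega\setminus V}\le \pnormspace{g}{2}{\Omega\setminus V}$ on the complement, and then invoking the convexity (triangle inequality) for the $L^{2,\infty}$ norm to pass from $V$ and $\Omega\setminus V$ to all of $\Omega$; Corollary~\ref{coro1} is then read off after discarding the $r^2$ factor and absorbing the linear $C\pi D$ term into $\sum\abs{d_B}^2$, exactly the bookkeeping steps you carry out. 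Your subadditivity inequality \eqref{subadd} is stated for the quasi-norm $\sup_t t\lambda_f(t)^{1/2}$ of \eqref{ltinorm2}, which is what the superlevel-set argument genuinely gives; for the actual norm $\pqnorm{\cdot}{2}{\infty}$ defined via the integral in Section~6 one instead uses the triangle inequality and picks up an inessential factor of~$2$. Either way, the universal constant absorbs the difference, so the two formulations are equivalent for this purpose, and no step of your outline fails.
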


These estimates can indeed help to bound from above
$\pqnormspace{\nabla_A u}{2}{\infty}{\Omega}^2$ by the total
number of vortices, provided we can control the energy-excess by
that number of vortices. This can in turn serve to obtain stronger
convergence results when a weak limit of $\nab_A u$ is known. For
example, if one considers  the energy $E_\ep$ (which we recall
amounts to  setting $A\equiv 0$), it is known from  Bethuel-Brezis-H\'elein \cite{bbh} that
$\pi \sum \abs{d_B} \lep = \pi n \lep$ is the leading order of the
energy (at least for minimizers) and that the next order term is a
term of order 1, called the ``renormalized energy" $W$, that
accounts for the interaction between the vortices. The upper bound
of Corollary \ref{coro1} roughly tells us that
$$\pqnormspace{\nabla u}{2}{\infty}{\Omega}^2 \le C (W + \sum \abs{d_B}^2 + \sum \abs{d_B} \log \sum \abs{d_B}). $$
It is expected that the total cost of interaction  of the vortices
in $W$ is of order of  $n^2$, where $n= \sum \abs{d_B}$ is the total vorticity mass  (here $n$ can blow up as $\ep \to 0$). Thus, we
obtain a bound of the form
$$\pqnormspace{\nabla u}{2}{\infty}{\Omega}^2 \le C n^2, $$ which indeed bounds the $\lti$
norm of $\nab u$ by an order of $n $, the total vorticity mass,
as expected in the heuristic calculations of Section \ref{idealv}.

In the simplest case where  we know that $E_\ep(u_\ep) \le \pi n
\lep + C$, which happens for energy minimizers when $n$ is
bounded, as proved in \cite{bbh},  we then  deduce that $\|\nab u
\|_{\lti} \le C$.  To be more precise, for the minimizers of
$E_\ep$ found in \cite{bbh}, we have
\begin{prop}[Application to minimizers of $E_\ep$ with Dirichlet
boundary conditions]\label{corobbh}\indent\par\noindent
 Let $\Omega$ be starshaped and $u_\ep$ minimize $E_\ep$ under the constraint $u_\ep =g $ on
$\bo$, where $g$ is a fixed $\si$-valued map of degree $d>0$ on the
boundary of $\Omega$, as studied in \cite{bbh}. Then there exists
a  universal constant $C$ such that
$$\pqnormspace{\nabla u_\ep}{2}{\infty}{\Omega}^2 \le C(\min_{\om^d} W + d (\log{d}+ 1)  )+o_\ep(1).$$
Moreover, as $\ep \to 0$, $$\nab u_\ep \rightharpoonup \nab u_\star
\quad \text{weakly-$*$ in }\ \lti(\om),$$ where $u_\star $ is the
$S^1$-valued ``canonical harmonic map" of \cite{bbh} to which
converges $u$ in $C^k_{loc}$ outside of a set of $d$ vortex
points.
\end{prop}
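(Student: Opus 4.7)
The plan is to combine Corollary \ref{coro1} (applied with $A \equiv 0$) with the sharp asymptotic analysis of $E_\ep$-minimizers due to Bethuel-Brezis-H\'elein. The two key ingredients I would import from \cite{bbh} are the energy upper bound
\begin{equation*}
E_\ep(u_\ep) \le \pi d \lep + \min_{\om^d} W + d\gamma + o_\ep(1),
\end{equation*}
where $\gamma$ is a universal constant, and the structural result that for $\ep$ sufficiently small the vortex set of $u_\ep$ consists of exactly $d$ points, each of degree $+1$, clustered near a minimizer $(a_1^\star,\dots,a_d^\star)$ of the renormalized energy $W$. Since $E_\ep(u_\ep)\lesssim \lep \le \ep^{\alpha-1}$ for any $\alpha\in(0,1)$, the hypotheses of Theorem \ref{energy_bound} and hence of Corollary \ref{coro1} are satisfied.

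Next, fix once and for all an $\ep$-independent $r \in (0,1)$ (say $r = 1/2$) and apply Theorem \ref{energy_bound}. For $\ep$ small enough, the resulting ball collection $\mathcal{B}$ separates the $d$ vortices of $u_\ep$, so each interior ball carries exactly one vortex of degree $+1$; in particular
\begin{equation*}
\sum_{B \subset \om_\ep} \abs{d_B} = d, \qquad \sum_{B \subset \om_\ep} \abs{d_B}^2 = d.
\end{equation*}
Inserting these into Corollary \ref{coro1} yields
\begin{equation*}
\pqnormspace{\nabla u_\ep}{2}{\infty}{\om}^2 \le C \( E_\ep(u_\ep) - \pi d \log\frac{r}{\ep d} + d \).
\end{equation*}
Writing $\pi d \log(r/(\ep d)) = \pi d \lep + \pi d \log(r/d)$ and plugging in the BBH upper bound makes the leading $\pi d \lep$ terms cancel, leaving
\begin{equation*}
\pqnormspace{\nabla u_\ep}{2}{\infty}{\om}^2 \le C\(\min_{\om^d} W + d\gamma + \pi d \log(d/r) + d\) + o_\ep(1) \le C\(\min_{\om^d} W + d(\log d + 1)\) + o_\ep(1),
\end{equation*}
which is the first claim, since $r$ and $\gamma$ are absolute constants.

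For the weak-$*$ convergence, the uniform bound just obtained places $\{\nabla u_\ep\}$ in a bounded subset of $\lti(\om) = (L^{2,1}(\om))^*$, so by Banach-Alaoglu any subsequence has a further weakly-$*$ convergent sub-subsequence; it thus suffices to identify any such accumulation point $V$ with $\nabla u_\star$. For a test function $\phi \in C_c^\infty(\om \setminus \{a_1^\star,\dots,a_d^\star\})$, the $C^k_{loc}$ convergence of $u_\ep$ to $u_\star$ away from the vortex set gives $\int\nabla u_\ep \cdot \phi \to \int\nabla u_\star \cdot \phi$, hence $\int V\cdot \phi = \int \nabla u_\star \cdot \phi$ for all such $\phi$. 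Density of these test functions in $L^{2,1}(\om)$, which follows from the absolute continuity of the $L^{2,1}$ norm applied to a shrinking neighborhood $U_\delta$ of $\{a_1^\star,\dots,a_d^\star\}$ together with the duality estimate $\abs{\int_{U_\delta}(\nabla u_\ep - V)\cdot \phi} \le C \|\phi\chi_{U_\delta}\|_{L^{2,1}}$, then forces $V = \nabla u_\star$, which gives weak-$*$ convergence of the whole family.

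The main obstacle is the bookkeeping for the ball collection $\mathcal{B}$: verifying that for the chosen $\ep$-independent $r$ and all sufficiently small $\ep$ the collection produced by Theorem \ref{energy_bound} isolates each vortex of $u_\ep$, so that $\sum \abs{d_B} = \sum \abs{d_B}^2 = d$ holds and the arithmetic collapses to the stated right-hand side. This is not automatic from the statement of Theorem \ref{energy_bound} — which permits balls of high or cancelling degrees — but it follows by combining BBH's fine description of the minimizer's vortex structure (only $+1$ degrees, pairwise separated in the limit) with a compatibility check between the scale $r$ and the inter-vortex distances.
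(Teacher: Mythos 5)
Your overall strategy matches the paper's: invoke the BBH energy expansion and vortex structure, feed $\sum\abs{d_B} = \sum\abs{d_B}^2 = d$ into Corollary \ref{coro1} with $A\equiv 0$, cancel the $\pi d \lep$ terms, and then pass to the weak-$*$ limit using that $\lti$ is a dual Banach space together with BBH's $C^k_{loc}$ convergence. However, the gap you flag at the end is a real one, and it is precisely where the paper does something different.

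You fix $r = 1/2$ $\ep$-independent and then assert that the final ball collection isolates the $d$ vortices into distinct balls, so that $\sum\abs{d_B}^2 = d$. This can fail: the ball construction only controls the total radius $r(\mathcal{B}) = r$, and if the limiting vortices $a_1^\star, \dotsc, a_d^\star$ are separated by distances small compared to $1/2$ (which can certainly happen for a general starshaped $\Omega$ and $g$ of large degree), the growth process will merge several degree-$1$ balls into a single ball of higher degree. In that case $\sum\abs{d_B}^2$ can be as large as $d^2$, and the arithmetic does not collapse to $d(\log d + 1)$. The vague "compatibility check" you invoke does not close this; what closes it is the paper's explicit choice
$$r = \frac14 \min_{i,j}\Bigl(\dist(a_i,\partial\Omega),\ \abs{a_i-a_j}\Bigr),$$
which is still $\ep$-independent but is pinned below the scale of vortex separation (and of distance to $\partial\Omega$). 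With this $r$, once $\ep$ is small enough that each of the $d$ degree-$1$ zeros $b_i^\ep$ is close to $a_i$, each zero falls in its own ball $B_i$, all $B_i \subset \Omega_\ep$, and $d_{B_i}=1$ by the BBH structure theorem, giving $\sum\abs{d_B}=\sum\abs{d_B}^2=d$ exactly. The residual $\ep$-independent $\log r$ contribution is then harmless. So you should replace the fixed $r=1/2$ with the configuration-dependent $r$ above; otherwise the claim $\sum\abs{d_B}^2=d$ is unjustified.

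For the weak-$*$ convergence your route is correct but different in detail. You identify the weak-$*$ limit by testing against $\phi\in C^\infty_c(\Omega\setminus\{a_i^\star\})$ and then invoking density of such $\phi$ in $L^{2,1}(\Omega)$ (via absolute continuity of the $L^{2,1}$ norm) together with the $\lti$--$L^{2,1}$ duality estimate on shrinking neighborhoods of the vortex points. The paper instead shows distributional convergence directly: it splits the test integral into a piece away from the $a_i$ (which converges by $C^k_{loc}$ convergence) and a piece near the $a_i$, which it estimates via H\"older with an exponent $p<2$, using that $\lti(\Omega)\hookrightarrow L^p(\Omega)$ for $p<2$ and $\nabla u_\star\in L^p$ for $p<2$. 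Both are valid; the paper's argument is more elementary in that it avoids asserting density in $L^{2,1}$, trading it for the embedding $\lti\hookrightarrow L^p$. Either version would serve here.
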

Note that the renormalized energy $W$ depends on $g$ (hence on
$d$), and the $d \log d$ is not optimal here; rather, it should be
$d$. It is more delicate to obtain this kind of improvement to the
estimate; this is one of the things done in \cite{p2} in the
context of the energy with applied magnetic field. Also the
convergence of $\nab u_\ep$ {\it cannot be strengthened},
convergence in $\lti$ strong does not hold, as illustrated by the
following model case: let $V_\ep $ be the vector field
$\frac{(x-p_\ep)^\perp}{|x-p_\ep|^2}$ and $V=
\frac{(x-p)^\perp}{|x-p|^2} $ with $p_\ep \to p$ as $\ep \to 0$.
Then $2\sqrt{\pi} \le \|V_\ep - V\|_{\lti} \le 4\sqrt{\pi}$, while clearly $V_\ep
\rightharpoonup V$ weakly-$*$ in $\lti$.
\medskip

We have focused on proving upper bounds on $\pqnorm{\nabla_A
u}{2}{\infty}$ in terms of its $L^2 $ norm and Ginzburg-Landau
energy. It is not difficult to obtain  some adapted, though not
optimal, lower bounds. For example, we can prove the following:
\begin{prop}\label{proplb}
 Let $f\in L^{\infty}(\Omega)$ be such that $\pnormspace{f}{\infty}{\Omega}
\le \frac{C}{\ep}$ for some $\ep <1$. Then
\begin{equation}
\label{borninf} \pqnormspace{f}{2}{\infty}{\Omega}^2 \ge \frac{1}{2\lep} \io
\abs{f}^2 - \frac{ C^2 \abs{\Omega}}{2\lep} .\end{equation}
\end{prop}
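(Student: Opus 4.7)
The plan is to combine the layer-cake representation of the $L^2$ norm with the two natural upper bounds on the distribution function $\lambda_f$: the trivial one coming from $|\Omega|$, and the one coming from the $L^{2,\infty}$ norm itself. The $L^{\infty}$ bound then plays the role of truncating the integral at height $C/\varepsilon$, which is where the logarithm $|\log\varepsilon|$ will enter.

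More concretely, first I would write
\begin{equation*}
\io |f|^2 = \int_0^\infty 2 t \, \lambda_f(t) \, dt = \int_0^{C/\ep} 2 t \, \lambda_f(t) \, dt,
\end{equation*}
using $\pnormspace{f}{\infty}{\Omega} \le C/\ep$ to truncate. Next, set $M := \pqnormspace{f}{2}{\infty}{\Omega}$. From the definition \eqref{ltinorm2} one has $\lambda_f(t) \le M^2/t^2$ for all $t > 0$, while trivially $\lambda_f(t) \le |\Omega|$.

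Now I would split the truncated integral at a threshold $t_0$ to be chosen, using the trivial bound on $[0,t_0]$ and the $L^{2,\infty}$ bound on $[t_0, C/\ep]$:
\begin{equation*}
\io |f|^2 \le \int_0^{t_0} 2 t \, |\Omega| \, dt + \int_{t_0}^{C/\ep} 2 t \, \frac{M^2}{t^2} \, dt = t_0^2 |\Omega| + 2 M^2 \log\!\Big(\frac{C}{\ep \, t_0}\Big).
\end{equation*}
The cleanest choice is $t_0 = C$ (admissible as soon as $\ep < 1$, so $C < C/\ep$), which yields
\begin{equation*}
\io |f|^2 \le C^2 |\Omega| + 2 M^2 \lep.
\end{equation*}
Rearranging gives exactly \eqref{borninf}, and this is the only real step of the argument.

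There is no genuine obstacle here; the only slightly delicate point is the choice of the split $t_0$: picking $t_0 = M/|\Omega|^{1/2}$ (the crossing point of the two bounds) would introduce an unwanted $\log M$ term, so one must instead pick $t_0$ independent of $M$ and absorb the residual $t_0^2|\Omega|$ into the additive error $C^2|\Omega|/(2\lep)$ on the right-hand side of \eqref{borninf}. The $L^\infty$ hypothesis is used precisely once, to cut off the layer-cake integral at $C/\ep$ so that the logarithm is bounded by $\lep$ rather than being infinite.
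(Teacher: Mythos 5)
Your proof is correct and follows essentially the same route as the paper's: layer-cake representation, truncation at $C/\varepsilon$ via the $L^\infty$ bound, splitting at $t_0=C$, and using $\lambda_f(t)\le|\Omega|$ on $[0,C]$ and $\lambda_f(t)\le \pqnormspace{f}{2}{\infty}{\Omega}^2/t^2$ on $[C,C/\varepsilon]$. The only cosmetic difference is that the paper explicitly invokes Lemma \ref{l2_inf_normability} to pass from $\sup_{t>0}t^2\lambda_f(t)$ (the quasi-norm squared) to $\pqnormspace{f}{2}{\infty}{\Omega}^2$, whereas you appeal directly to the introduction's definition \eqref{ltinorm2}, which amounts to the same thing.
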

This proposition is a direct consequence of the definition of the
$\lti$ norm. Its short proof is presented in Section
\ref{l_2_inf_discussion}.

For critical points of the Ginzburg-Landau energy, it is known
that the gradient bound $\pnormspace{\nabla_A u}{\infty}{\Omega}\le
\frac{C}{\ep}$ holds. Thus applying Proposition \ref{proplb} to
$f= \nab_A u$, we find
$$\pqnormspace{\nabla_A u}{2}{\infty}{\Omega}^2  \ge \frac{1}{2\lep} \io \abs{\nab_A u}^2
-o(1).$$ Knowing some  lower bounds (provided by the ball
construction)  of the type $\io\abs{\nab_A u}^2 \ge 2\pi n \lep$, where
$n$ is the total degree of the vortices, we find lower bounds of
the type  $\pqnormspace{\nabla_A u}{2}{\infty}{\Omega}^2 \ge \pi n $, also
relating the $\lti$ norm of $\nab_A u$ to the total number
 of vortices.\medskip

In \cite{p2}, which is the sequel of this paper, the ideas and main 
results of this paper are extended to the case of the full
Ginzburg-Landau energy with an applied magnetic field, getting
better estimates on $\pqnormspace{\nabla_A u}{2}{\infty}{\Omega}$
in terms of the number of vortices.  These results lead to a somewhat stronger (than previously known results) convergence of  $\nab_A u$ and of the Jacobian determinants of $u$  when certain energy conditions are fulfilled.

\subsection{Plan}
The paper is organized as follows: in Section 2, for the
convenience of the reader, we  give a review (with slight
modifications) of the crucial definitions and ingredients for the
vortex-balls construction following Chapter 4 of \cite{ss_book}.

In Section 3 we present the main argument, with the introduction
of the function $G$ and the ``trick'' that allows us to gain an extra term in the lower
bounds for the energy on annuli.

In Section 4  we show how this extra term incorporates into the
estimates through the growing and merging of balls, and hence through the
whole ball construction.

In Section 5  we deduce the proof of the main results.

In Section 6 we estimate the $\lti$ norm of $G$ in order to pass
from Theorem \ref{energy_bound} to Theorem \ref{norm_switch}. This
is the only section in which $\lti$ comes into play.

In Section 7 we show how the methods of this paper can be adapted
to work with the version of the ball construction formulated by
Jerrard in \cite{j}, at the expense of less control of
$\|G\|_{\lti}$.

\section{Reminders for the vortex balls construction}
\subsection{The ball growth method}

In finding lower bounds for the Ginzburg-Landau energy of
a configuration $(u,A)$ it is most convenient to work on annuli,
the deleted interior discs of which contain the set where $u$ is near $0$,
 and in particular the vortices.  On each annulus, a lower bound is found
 in terms of a topological term (the degree of the vortex) and a conformal
 factor, which we define to be the logarithm of the ratio of the outer and
 inner radii of the annulus.  Therefore, to create useful lower bounds we must
  be able to identify the set where $u$ is near $0$ and then create a family
  of annuli with large conformal type outside this set.  The first component
  of the process uses energy methods to find a covering of the set by small,
  disjoint balls, and is addressed later.  The second component is known as
  the general ball growth method and is presented in this section.  Here we
  follow the construction of Chapter 4 from \cite{ss_book}.

As a technical tool we will need the ability to merge two tangent or overlapping
balls into a single ball that contains the original balls, and with the property
 that its radius is equal to the sum of the radii of the original balls.
  Our first lemma recalls how to do such a merging.  We write $r(B)$ for
  the radius of a ball $B$.

\begin{lem}\label{ball_merging}
Let $B_1$ and $B_2$ be closed balls in $\Rn{n}$ such that $B_1 \cap B_2 \ne \varnothing$.  Then there is a closed ball $B$ such that $r(B) = r(B_1) + r(B_2)$ and $B_1 \cup B_2 \subset B$.
\end{lem}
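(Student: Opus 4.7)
The plan is to construct the enclosing ball $B$ explicitly by picking its center on the line segment joining the centers of $B_1$ and $B_2$. Write $B_i = \overline{B}(x_i, r_i)$ for $i = 1, 2$ and let $d = \abs{x_1 - x_2}$. The hypothesis $B_1 \cap B_2 \neq \varnothing$ translates to the triangle inequality $d \le r_1 + r_2$, which is the only input I will use.

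I seek a center $x$ of the form $x = x_1 + s(x_2 - x_1)$ for some $s \in [0,1]$ (when $d > 0$; the case $x_1 = x_2$ is trivial, take $x = x_1$ and $s = 0$). Any $y \in B_1$ satisfies $\abs{y - x} \le \abs{y - x_1} + \abs{x_1 - x} \le r_1 + sd$ by the triangle inequality, and analogously $\abs{y - x} \le r_2 + (1-s)d$ for $y \in B_2$. Hence $B_1 \cup B_2 \subset \overline{B}(x, r_1 + r_2)$ provided I can choose $s \in [0,1]$ so that
\begin{equation*}
r_1 + sd \le r_1 + r_2 \qquad \text{and} \qquad r_2 + (1-s)d \le r_1 + r_2,
\end{equation*}
i.e., so that $sd \le r_2$ and $(1-s)d \le r_1$.

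The next step is to verify that such $s$ exists. The two constraints read $s \le r_2/d$ and $s \ge 1 - r_1/d$, so I need the interval $[\max(0, 1 - r_1/d),\, \min(1, r_2/d)]$ to be nonempty. Nonemptiness follows directly from $d \le r_1 + r_2$: this gives $1 - r_1/d \le r_2/d$, while the clipping at $0$ and $1$ handles the degenerate cases where one ball already contains the other (for instance $d \le r_1$ forces $s = 0$, i.e. $x = x_1$, and one checks that $B_2 \subset B_1 \subset \overline{B}(x_1, r_1 + r_2)$ as well). Choosing any admissible $s$ then gives a closed ball $B = \overline{B}(x, r_1 + r_2)$ with the required properties.

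There is no genuine obstacle here; the only subtlety is bookkeeping the endpoint cases $d = 0$, $d \le r_1$, and $d \le r_2$, where the "interior" choice of $s$ would fall outside $[0,1]$ and one must clip. The argument is dimension-free, relying only on the triangle inequality in $\Rn{n}$.
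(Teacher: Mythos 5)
Your proof is correct and follows essentially the same approach as the paper: placing the enclosing ball's center on the segment between $x_1$ and $x_2$ and using the triangle inequality. The paper simply writes down the explicit weight $s = r_2/(r_1+r_2)$, which lands in your admissible interval for every configuration (so no clipping is ever needed), whereas you argue nonemptiness of the constraint interval abstractly; this is a cosmetic difference.
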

\begin{proof}
If $B_1 = B(a_1,r_1)$ and $B_2 = B(a_2,r_2)$, then $ B =
B\left(\frac{r_1 a_1 + r_2 a_2}{r_1+r_2},r_1+r_2\right) $ has the
desired properties.
\end{proof}

The ball growth lemma now provides an algorithm for growing an
initial collection of small balls into a final collection of large
balls.  Essentially, the balls in a collection are grown
concentrically by increasing their radii by the same conformal
factor.  This is continued until a tangency occurs, at which point
the previous lemma is used to merge the tangent balls.  The
process is then repeated in stages until the collection is of the
desired size.  The annuli of interest at each stage are formed by
deleting the initial collection of balls from the final
collection; the construction guarantees that all of the annuli in
a stage have the same conformal type.

Given a finite collection of disjoint balls, $\mathcal{B}$, we define the radius of the collection, $r(\mathcal{B})$, to be the sum of the radii of the balls in the collection, i.e.
\begin{equation*}
 r(\mathcal{B}) = \sum_{B \in \mathcal{B}} r(B).
\end{equation*}
For any $\lambda >0$ and any ball $B = B(a,r),$ we define
$\lambda B = B(a,\lambda r)$.  Extending this notation to
 collections of balls, we write $\lambda \mathcal{B} =
  \{ \lambda B \;\vert\; B \in \mathcal{B} \}$.  For an annulus
   $A =B(a,r_1) \backslash B(a,r_0)$, we define the conformal
   factor by $\tau = \log(r_1/r_0)$.  We can now state the ball growth lemma, the proof of which can be found in Theorem 4.2 of \cite{ss_book}.

\begin{lem}[Ball growth lemma]\label{ball_growth_lemma}
Let $\mathcal{B}_0$ be a finite collection of disjoint, closed balls.  There exists a family $\{ \mathcal{B}(t) \} _{t\in \Rn{}_{+}}$ of collections of disjoint, closed balls such that the following hold.

1. $\mathcal{B}_0 = \mathcal{B}(0)$.

2. For $s\ge t \ge 0$,
\begin{equation*}
\bigcup_{B \in \mathcal{B}(t)}B \subseteq \bigcup_{B \in \mathcal{B}(s)}B.
\end{equation*}

3. There exists a finite set $T \subset \Rn{+}$ such that if $[t,s]\subset \Rn{+} \backslash T$, then $\mathcal{B}(s) = e^{s-t}\mathcal{B}(t)$.  In particular, if $B(s)\in \mathcal{B}(s)$ and $B(t) \in \mathcal{B}(t)$ are such that $B(t) \subset B(s)$, then $B(s) = e^{s-t} B(t)$ and the conformal factor of the annulus $B(s) \backslash B(t)$ is $\tau = s-t$.

4. For every $t\in \Rn{+}$, $r(\mathcal{B}(t))=e^tr(\mathcal{B}_0)$.
\end{lem}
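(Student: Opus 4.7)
The plan is to construct the family $\{\mathcal{B}(t)\}$ iteratively in time, by alternating two operations: concentric conformal growth of a disjoint collection, and instantaneous merging of intersecting balls via Lemma \ref{ball_merging}. Set $t_0 = 0$ and $\mathcal{B}(t_0) = \mathcal{B}_0$. Assuming a disjoint collection $\mathcal{B}(t_k) = \{B(a_i,r_i)\}$ has been defined, I would set
\begin{equation*}
\mathcal{B}(t) = \{B(a_i, e^{t-t_k} r_i)\} \qquad \text{for } t \ge t_k,
\end{equation*}
and let $t_{k+1}$ be the first subsequent time at which two of these balls become tangent. At $t = t_{k+1}$, I would apply Lemma \ref{ball_merging} iteratively to any pair of intersecting balls in the current collection, replacing the two balls by their merged ball of radius equal to the sum of the two radii, until the resulting family is again pairwise disjoint; this new family is $\mathcal{B}(t_{k+1})$, and conformal growth then resumes from it.

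The four properties follow essentially from the construction. Property 1 is immediate. Property 2 holds because during a growth phase every ball enlarges, and every merge replaces $B_1, B_2$ by a ball that contains $B_1 \cup B_2$, so the union of the balls is nondecreasing in $t$. On each open interval $(t_k, t_{k+1})$ the evolution is pure concentric scaling, so if $t,s$ both lie in such an interval then $\mathcal{B}(s) = e^{s-t}\mathcal{B}(t)$; and whenever $B(t) \subset B(s)$ under this correspondence, the annulus $B(s)\setminus B(t)$ has conformal factor $\log(e^{s-t}) = s-t$, yielding Property 3. For Property 4, conformal growth multiplies the total radius $r(\mathcal{B}(t))$ by $e^{s-t}$, while each merging step preserves this total sum exactly, since Lemma \ref{ball_merging} gives $r(B) = r(B_1)+r(B_2)$; hence $r(\mathcal{B}(t)) = e^t r(\mathcal{B}_0)$ for all $t$ by induction on $k$.

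The main subtlety, which would be the key step requiring care, is the merging procedure at a single critical time $t_{k+1}$: after one merge, the new ball of radius $r_1+r_2$ can extend well beyond $B_1 \cup B_2$ transversely to the line joining the centers $a_1,a_2$, so it may now overlap with balls of the collection that were previously disjoint from $B_1$ and $B_2$, triggering further cascading merges. I would handle this by repeatedly applying Lemma \ref{ball_merging} at the fixed time $t_{k+1}$; since each application strictly decreases the number of balls, the procedure terminates after finitely many steps and produces a disjoint collection. The same counting argument gives the finiteness of $T = \{t_k\}$: across the whole construction the total number of merges is at most $|\mathcal{B}_0|-1$, so after some finite time the collection grows purely conformally thereafter.
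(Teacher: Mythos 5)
Your proposal is correct and is essentially the standard construction; the paper itself gives no proof but points to Theorem 4.2 of Sandier--Serfaty, which proceeds exactly by this alternation of concentric conformal growth with iterated merging via Lemma \ref{ball_merging} at the finitely many tangency times. You have correctly identified and handled the one genuine subtlety (cascading merges at a fixed time), and your counting argument bounding $|T|$ by $|\mathcal{B}_0|-1$ is the right way to get finiteness of the merging set.
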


We now show how to couple lower bounds to the geometric construction.  We may think of a function $\mathcal{F}:\Rn{2}\times \Rn{+}\rightarrow \Rn{+}$ as being defined also for collections of balls, $\mathcal{B}$, via the identifications
\begin{equation*}
 \mathcal{F}(B(x,r)) = \mathcal{F}(x,r)
\end{equation*}
and
\begin{equation*}
 \mathcal{F}(\mathcal{B}) = \sum_{B \in \mathcal{B}} \mathcal{F}(B).
\end{equation*}
Here and for the rest of the paper we employ the notation $\bar{B}$ to refer to a specific ball $\bar{B}$ in some collection, and not to refer to the closure of $B$.  We will also abuse notation by writing $\bar{B} \cap \mathcal{B}(t)$ for the collection $\{\bar{B} \cap B \;\vert\; B \in \mathcal{B}(t)\}$.

\begin{lem}\label{function_balls}
Let $\mathcal{B}_0$ be a finite collection of disjoint, closed balls, and suppose that $\mathcal{B}(t)$ is the collection of balls obtained from $\mathcal{B}_0$ by growing them according to the ball growth lemma.  Fix a time $s>0$ and suppose that $0 < s_1 < \dotsb < s_K \le s$ denote the times at which mergings occur in the the ball growth lemma, i.e. let the $s_i$ be an increasing enumeration of the set $T$ defined there.  Then
\begin{equation}\label{fn_b_1}
\mathcal{F}(\mathcal{B}(s)) - \mathcal{F}(\mathcal{B}_0)  =
\int_0^s \sum_{B(x,r)\in\mathcal{B}(t)} r \frac{\partial
\mathcal{F}}{\partial r}(x,r)\, dt + \sum_{k=1}^{K}
\mathcal{F}(\mathcal{B}(s_k))-\mathcal{F}(\mathcal{B}(s_k))^{-},
\end{equation}
where $\mathcal{F}(\mathcal{B}(s_k))^{-}=\lim\limits_{t\rightarrow s_k^-} \mathcal{F}(\mathcal{B}(t))$.  Moreover, for any $\bar{B} \in \mathcal{B}(s)$, the following localized version of \eqref{fn_b_1} holds:
\begin{equation}\label{fn_b_2}
\mathcal{F}(\bar{B}) - \mathcal{F}(\bar{B} \cap \mathcal{B}_0 ) =
\int_0^s \sum_{B(x,r)\in \bar{B} \cap \mathcal{B}(t)} r
\frac{\partial \mathcal{F}}{\partial r}dt + \sum_{k=1}^{K}
\mathcal{F}(\bar{B} \cap \mathcal{B}(s_k) )-\mathcal{F}(\bar{B} \cap
\mathcal{B}(s_k))^{-}.
\end{equation}
\end{lem}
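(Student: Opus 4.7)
The plan is to telescope across the finitely many merging times, applying the fundamental theorem of calculus on each open interval where the ball collection evolves by pure dilation, and then accounting for the jumps at the merging times by the sum on the right-hand side.

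More concretely, set $s_0 = 0$ and $s_{K+1} = s$, and consider a subinterval $(s_{k-1},s_k)$. By Property 3 of the ball growth lemma (Lemma \ref{ball_growth_lemma}), there are no mergings on $[s_{k-1}^+, s_k^-]$, so for each $t$ in this interval and each $B(x,r(t)) \in \mathcal{B}(t)$, the center $x$ is fixed and $r(t) = e^{t - s_{k-1}}\, r(s_{k-1}^+)$. Hence $r'(t) = r(t)$, and by the chain rule
\begin{equation*}
\frac{d}{dt}\mathcal{F}(B(x,r(t))) \;=\; r(t)\,\frac{\partial \mathcal{F}}{\partial r}(x,r(t)).
\end{equation*}
Since the collection of centers is constant on $(s_{k-1},s_k)$, summing over $\mathcal{B}(t)$ and integrating yields
\begin{equation*}
\mathcal{F}(\mathcal{B}(s_k))^{-} - \mathcal{F}(\mathcal{B}(s_{k-1})) \;=\; \int_{s_{k-1}}^{s_k}\sum_{B(x,r)\in\mathcal{B}(t)} r\,\frac{\partial \mathcal{F}}{\partial r}(x,r)\,dt.
\end{equation*}
Telescoping over $k=1,\dots,K+1$ and inserting the merging jumps $\mathcal{F}(\mathcal{B}(s_k)) - \mathcal{F}(\mathcal{B}(s_k))^-$ to correct for the discontinuities produces \eqref{fn_b_1}.

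For the localized version \eqref{fn_b_2}, fix $\bar{B} \in \mathcal{B}(s)$ and consider, for each $t\in[0,s]$, the subcollection $\bar{B} \cap \mathcal{B}(t) = \{B \in \mathcal{B}(t) : B \subset \bar{B}\}$; by the monotonicity in Property 2 and the growth/merging rules, this is exactly the set of balls at time $t$ that will eventually be swallowed into $\bar{B}$. On each interval $(s_{k-1},s_k)$ these balls dilate concentrically just as before (the dynamics are purely local to $\bar{B}$ on such subintervals), so the same computation applies verbatim to this subcollection. Telescoping again and adding the merging-time jumps restricted to $\bar{B}$ gives \eqref{fn_b_2}.

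The only genuinely delicate point I anticipate is bookkeeping at the merging times: one must check that differentiating ``one ball at a time'' is legitimate even though the indexing of balls in $\mathcal{B}(t)$ changes across each $s_k$. This is handled by working on the open subintervals $(s_{k-1},s_k)$ (where the indexing is fixed) and absorbing all of the combinatorial change into the jump terms $\mathcal{F}(\mathcal{B}(s_k)) - \mathcal{F}(\mathcal{B}(s_k))^{-}$. For the local statement one also needs the observation that mergings occurring outside $\bar{B}$ contribute $0$ to the jumps of $\bar{B}\cap\mathcal{B}(\cdot)$, so only mergings inside $\bar B$ appear, which is exactly what \eqref{fn_b_2} asserts.
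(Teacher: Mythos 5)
Your proof is correct and is essentially the argument behind the result. The paper itself simply defers to Proposition 4.1 of Sandier--Serfaty's book, noting that the only change is retaining the jump sum rather than bounding it below; you have reconstructed the natural proof: partition $[0,s]$ at the merging times, apply the fundamental theorem of calculus on each open subinterval where Property 3 of Lemma \ref{ball_growth_lemma} guarantees pure concentric dilation with $r'(t)=r(t)$, and telescope, with the mismatches across each $s_k$ being exactly the jump terms $\mathcal{F}(\mathcal{B}(s_k))-\mathcal{F}(\mathcal{B}(s_k))^-$. Your treatment of the localized version \eqref{fn_b_2} is also right: because the balls are disjoint and the construction is local, the subcollection $\{B\in\mathcal{B}(t):B\subset\bar B\}$ evolves autonomously, and mergings outside $\bar B$ contribute zero to its jumps.

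One small point worth making explicit: the argument implicitly uses that $\mathcal{F}(x,\cdot)$ is differentiable (or at least absolutely continuous) in $r$, which is what justifies applying the fundamental theorem of calculus on each subinterval; this is satisfied in the intended application $\mathcal{F}(x,r)=\int_{B(x,r)}e(u)$, where $\partial_r\mathcal{F}$ is the boundary integral. You may also wish to note the edge case $s=s_K$: there $\mathcal{F}(\mathcal{B}(s))^-$ need not equal $\mathcal{F}(\mathcal{B}(s))$, but the telescoping still closes because the $k=K$ jump term accounts for the discontinuity at $s_K$. Neither is a gap, just bookkeeping worth acknowledging.
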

\begin{proof}
The proof is the same as in Proposition 4.1 of \cite{ss_book}, but here we keep the second sum in \eqref{fn_b_1} rather than bounding it.
\end{proof}

Note that in the case that
\begin{equation*}
\mathcal{F}(x,r) = \int_{B(x,r)} e(u)
\end{equation*}
for some $u$-dependent energy density $e(u)$, the first term on the right of \eqref{fn_b_1} corresponds to integration in polar coordinates on each annulus, and the second corresponds to the energy contained in the non-annular parts of $\mathcal{B}(s)$.

\subsection{The radius of a set}

In order to effectively use the ball growth lemma to generate
lower bounds, it is necessary to first produce a collection of
disjoint balls covering the set where $u$ is near $0$.  We do this
by using the concept of the radius of a set, which is useful in
two ways.  First, it is defined as an infimum over all coverings
of the set by collections of balls, so that by exceeding the
infimum we may find a covering of the set by balls.  Second, it is
comparable to the $\mathcal{H}^1$ Hausdorff measure of the boundary, and so it can
be used with the co-area formula to produce coverings by balls of
the set where $\abs{u}$ is far from unity.

We define the radius of a compact set $\omega \subset \Rn{2}$, written $r(\omega)$, by
\begin{equation*}
r(\omega) = \inf \{r(B_1)+\dotsb+r(B_k) \;\vert\; \omega \subset \cup_{i=1}^k B_i \text{ and }k< \infty    \}.
\end{equation*}
We make the following remarks.\\
1) In the definition we may assume that the balls are disjoint.  If they are not, then we merge balls that meet into a single ball with radius equal to the sum of the radii of the merged balls according to Lemma \ref{ball_merging}.  \\
2) If $A\subseteq B$ then $r(A) \le r(B)$.\\
3) The infimum is not necessarily achieved.

It is necessary to also introduce a modification of the radius that measures the radius of the connected components of a compact set $\omega$ that lie inside an open set $\Omega$.  Indeed, we define
\begin{equation*}
    r_{\Omega}(\omega) = \sup \{ r(K\cap \omega) \;|\; K\subset
\Omega \text{ s.t. } K \text{ is compact and }\partial K \cap \omega = \varnothing  \}.
\end{equation*}
The following lemmas record the crucial properties of these quantities.  The omitted proofs may be found in Section 4.4 of \cite{ss_book}.

\begin{lem}
    Let $\omega$ be a compact subset of $\Rn{2}$.  Then
\begin{equation}
2r(\omega) \le \mathcal{H}^1(\partial\omega).
\end{equation}
\end{lem}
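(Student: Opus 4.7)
The plan is to decompose $\omega$ into its (at most countably many) connected components $\{\omega_\alpha\}$, cover each $\omega_\alpha$ by a single closed ball $B_\alpha$ of radius at most $\mathcal{H}^1(\partial\omega_\alpha)/2$, and sum: since $\sum_\alpha \mathcal{H}^1(\partial\omega_\alpha)\le \mathcal{H}^1(\partial\omega)$ (the components being disjoint closed sets), this yields $2r(\omega)\le\sum_\alpha 2r(B_\alpha)\le\mathcal{H}^1(\partial\omega)$. We may assume $\mathcal{H}^1(\partial\omega)<\infty$, else the inequality is trivial since $\omega$ is compact.

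The core sub-claim is that any compact connected $K\subset\mathbb{R}^2$ is contained in a closed ball of radius at most $\mathcal{H}^1(\partial K)/2$. Let $\bar B(m,\rho)$ denote the smallest closed ball enclosing $K$. By the classical structure of smallest enclosing balls in the plane, $m$ lies in the convex hull of $K\cap\partial B(m,\rho)$, so one of two cases occurs: either (i) there are antipodal points $p,q\in K\cap\partial B(m,\rho)$ with $|p-q|=2\rho$, or (ii) there are three points $p_1,p_2,p_3\in K\cap\partial B(m,\rho)$ whose triangle has $m$ in its open interior. In case (i), projecting $K$ orthogonally onto the line through $p$ and $q$ gives, by connectedness of $K$, a closed interval of length exactly $2\rho$ (it contains $\pi(p),\pi(q)$ at distance $2\rho$ and is contained in the projection of the enclosing ball). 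A vertical-slice argument shows that at each point of this interval the perpendicular-direction extreme points of $K$ necessarily lie in $\partial K$, so $\pi(\partial K)=\pi(K)$; since $\pi$ is $1$-Lipschitz, $\mathcal{H}^1(\partial K)\ge 2\rho$. In case (ii), $\mathrm{conv}(K)$ contains the triangle $\triangle p_1p_2p_3$, whose perimeter $2\rho\sum_i\sin(A_i/2)$ -- parametrized by the central angles $A_i\in(0,\pi)$ with $\sum_i A_i=2\pi$ -- is elementary to verify as strictly greater than $4\rho$ (the infimum $4\rho$ is only approached in the degenerate antipodal limit). Combined with the classical inequality $\mathrm{perim}(\mathrm{conv}(K))\le\mathcal{H}^1(\partial K)$ (which itself follows by applying the slice argument in every direction and integrating via Cauchy's formula), this gives $\mathcal{H}^1(\partial K)>4\rho>2\rho$.

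The main technical obstacle is a rigorous justification of the inequality $\mathrm{perim}(\mathrm{conv}(K))\le\mathcal{H}^1(\partial K)$ for arbitrary compact connected planar $K$, which is classical in convex geometry but delicate for non-smooth boundaries. All other ingredients -- the structure theorem for smallest enclosing balls in the plane, the vertical-slice projection argument, and the inscribed-triangle perimeter computation -- are routine, and the final inequality follows by assembly over connected components.
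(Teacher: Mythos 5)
The paper does not prove this lemma; it refers to Section 4.4 of \cite{ss_book} for the omitted proof, so there is no in-paper argument to compare against. Evaluating your proposal on its own terms, there are two genuine gaps. First, the reduction to connected components is broken at the start: a compact subset of $\Rn{2}$ need \emph{not} have at most countably many connected components --- a Cantor set has uncountably many (all singletons) --- so the decomposition $\omega=\bigcup_\alpha\omega_\alpha$ with a countable sum $\sum_\alpha 2r(B_\alpha)$ is not available in general. The standard way around this covers $\partial\omega$ directly by finitely many balls, merges overlapping ones (Lemma~\ref{ball_merging}) to make them pairwise disjoint, and then observes that $\Rn{2}\setminus\bigcup_j B_j$ is connected and contains points far from $\omega$, hence is disjoint from $\omega$; thus a disjoint ball cover of $\partial\omega$ automatically covers $\omega$. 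This avoids components entirely.

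Second, and more seriously, the inequality $\mathrm{perim}(\mathrm{conv}(K))\le\mathcal{H}^1(\partial K)$ invoked in case (ii) is false. Take $K$ to be a tripod: three segments of length $\ell$ emanating from a common point at $120^\circ$. Then $\partial K=K$, $\mathcal{H}^1(\partial K)=3\ell$, while $\mathrm{conv}(K)$ is an equilateral triangle of side $\ell\sqrt3$ with perimeter $3\sqrt3\,\ell>3\ell$. The Cauchy-formula derivation you sketch also cannot give the constant $1$: since $\mathrm{perim}(\mathrm{conv}(K))=\int_0^\pi w(\theta)\,d\theta$ and the slicing argument gives $w(\theta)\le\mathcal{H}^1(\partial K)$ for each $\theta$, you only obtain $\mathrm{perim}(\mathrm{conv}(K))\le\pi\,\mathcal{H}^1(\partial K)$, and combined with $\mathrm{perim}(\triangle p_1p_2p_3)>4\rho$ this yields only $\mathcal{H}^1(\partial K)>4\rho/\pi<2\rho$, which is too weak. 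The classical inequality that does hold is $\mathrm{perim}(\mathrm{conv}(\Gamma))\le 2\,\mathcal{H}^1(\Gamma)$ for a compact \emph{connected} $\Gamma$ (a continuum), proved via arc-length parametrization of the continuum (G\o{}l\k{a}b-type), not via Cauchy's formula; even then, to apply it to $\Gamma=\partial K$ you must first guarantee $\partial K$ is connected, e.g.\ by filling the bounded components of $\Rn{2}\setminus K$ and invoking unicoherence of the plane (this does not increase $\rho$ and only shrinks $\partial K$). With the corrected factor~$2$ and the sharp $\mathrm{perim}(\triangle)>4\rho$ estimate, case (ii) then closes, but as written your argument rests on a false lemma and on a derivation that would not give the needed constant even if the lemma held.
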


\begin{lem}\label{haus_bound}
    Let $\Omega$ be open and $\omega \subset \Omega$ be a compact set.  Then
\begin{equation}
2 r_{\Omega}(\omega) \le \mathcal{H}^1(\partial\omega\cap\Omega).
\end{equation}
\end{lem}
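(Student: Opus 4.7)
The plan is to reduce this to the previous lemma by applying it to the compact set $K \cap \omega$ for an arbitrary admissible $K$ in the definition of $r_\Omega(\omega)$, and then to take a supremum.

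First, I would fix any compact $K \subset \Omega$ with $\partial K \cap \omega = \varnothing$. Since $K$ and $\omega$ are both compact in $\Rn{2}$, the intersection $K \cap \omega$ is compact, so the preceding lemma applies and yields
\begin{equation*}
2\, r(K \cap \omega) \le \mathcal{H}^1(\partial(K \cap \omega)).
\end{equation*}

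The main step is then to control $\partial(K \cap \omega)$ by $\partial\omega \cap \Omega$. The standard point-set identity $\partial(A \cap B) \subset (\partial A \cap \bar B) \cup (\bar A \cap \partial B)$, together with the fact that both $K$ and $\omega$ are closed, gives
\begin{equation*}
\partial(K\cap\omega) \subset (\partial K \cap \omega) \cup (K \cap \partial\omega).
\end{equation*}
The admissibility hypothesis $\partial K \cap \omega = \varnothing$ kills the first set on the right, and the containment $K \subset \Omega$ lets me rewrite the second as a subset of $\partial\omega \cap \Omega$. Hence $\partial(K \cap \omega) \subset \partial\omega \cap \Omega$, and monotonicity of Hausdorff measure gives
\begin{equation*}
\mathcal{H}^1(\partial(K \cap \omega)) \le \mathcal{H}^1(\partial\omega \cap \Omega).
\end{equation*}

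Combining the two displayed inequalities yields $2\,r(K \cap \omega) \le \mathcal{H}^1(\partial\omega \cap \Omega)$ for every admissible $K$. Taking the supremum over all such $K$ in the definition of $r_\Omega(\omega)$ gives the desired conclusion $2 r_\Omega(\omega) \le \mathcal{H}^1(\partial\omega \cap \Omega)$. I do not expect a serious obstacle here; the only delicate point is invoking the set-theoretic inclusion for $\partial(K\cap\omega)$ correctly, which requires $K$ and $\omega$ to be closed—both of which hold since they are compact.
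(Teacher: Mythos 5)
Your proof is correct: applying the unrestricted bound $2r(\cdot) \le \mathcal{H}^1(\partial\cdot)$ to $K\cap\omega$, using the standard boundary inclusion together with $\partial K \cap \omega = \varnothing$ and $K\subset\Omega$ to get $\partial(K\cap\omega) \subset \partial\omega\cap\Omega$, and then taking the supremum over admissible $K$ is exactly the natural reduction. The paper omits the proof and refers to Section 4.4 of \cite{ss_book}, which argues the same way, so your approach matches.
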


\begin{lem}
    Let $\omega_1, \omega_2$ be compact subsets of $\Rn{2}$.  Then
\begin{equation}
    r(\omega_1 \cup \omega_2) \le r(\omega_1) + r(\omega_2).
\end{equation}
\end{lem}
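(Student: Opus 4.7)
The plan is to prove this by a simple $\varepsilon$-near-optimality argument based directly on the infimum definition of $r(\omega)$. The key observation is that the union of a covering of $\omega_1$ and a covering of $\omega_2$ is a covering of $\omega_1 \cup \omega_2$, and the total radius is additive over disjoint unions of ball collections.

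More concretely, fix $\varepsilon > 0$. By the definition of the infimum in the formula for $r(\omega_1)$, there exists a finite collection of balls $B_1^{(1)},\dots,B_{k_1}^{(1)}$ covering $\omega_1$ with
\begin{equation*}
\sum_{i=1}^{k_1} r(B_i^{(1)}) \le r(\omega_1) + \varepsilon/2.
\end{equation*}
Likewise there exists a finite collection $B_1^{(2)},\dots,B_{k_2}^{(2)}$ covering $\omega_2$ with
\begin{equation*}
\sum_{j=1}^{k_2} r(B_j^{(2)}) \le r(\omega_2) + \varepsilon/2.
\end{equation*}

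The concatenated collection $\{B_i^{(1)}\} \cup \{B_j^{(2)}\}$ is a finite collection of balls whose union covers $\omega_1 \cup \omega_2$. Hence, by the definition of $r(\omega_1 \cup \omega_2)$ as an infimum,
\begin{equation*}
r(\omega_1 \cup \omega_2) \le \sum_{i=1}^{k_1} r(B_i^{(1)}) + \sum_{j=1}^{k_2} r(B_j^{(2)}) \le r(\omega_1) + r(\omega_2) + \varepsilon.
\end{equation*}
Letting $\varepsilon \to 0$ gives the claim. (Note that the definition does not require the covering balls to be disjoint; should one wish to work with a disjoint collection, Remark 1 after the definition, together with Lemma \ref{ball_merging}, allows one to merge any intersecting balls in the concatenated collection without increasing the total radius.)

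There is no real obstacle: the result is essentially built into the definition of $r$ as an infimum over finite ball coverings, combined with the fact that the radius functional on ball collections is additive over disjoint unions of collections.
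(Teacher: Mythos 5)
Your proof is correct and is the natural argument from the infimum definition of $r$: concatenate $\varepsilon/2$-near-optimal coverings of $\omega_1$ and $\omega_2$, observe that the result covers $\omega_1\cup\omega_2$, and let $\varepsilon\to 0$. The paper itself omits the proof of this lemma (deferring to Section 4.4 of the Sandier--Serfaty book), and the argument there is the same one you give; your parenthetical remark about not needing disjointness (or restoring it via Lemma \ref{ball_merging}) is also correct and consistent with the paper's Remark 1 following the definition.
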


\begin{lem}\label{rad_subadd}
    Let $\omega_1, \omega_2$ be compact sets, and let $\Omega \subset \Rn{2} $ be an open set.  Then
\begin{equation}
    r_{\Omega}(\omega_1 \cup \omega_2) \le r_{\Omega}(\omega_1) + r_{\Omega}(\omega_2).
\end{equation}
\end{lem}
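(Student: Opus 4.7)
The plan is to deduce subadditivity for $r_\Omega$ directly from the already-stated subadditivity of the ordinary radius $r$, by exploiting the fact that the admissibility condition appearing in the definition of $r_\Omega(\omega_1\cup\omega_2)$ is simultaneously an admissibility condition for $r_\Omega(\omega_1)$ and $r_\Omega(\omega_2)$.

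Concretely, I would fix an arbitrary compact set $K\subset\Omega$ with $\partial K\cap(\omega_1\cup\omega_2)=\varnothing$. Since $\partial K\cap(\omega_1\cup\omega_2)=(\partial K\cap\omega_1)\cup(\partial K\cap\omega_2)$, the emptiness of the union forces $\partial K\cap\omega_1=\varnothing$ and $\partial K\cap\omega_2=\varnothing$. Hence the same $K$ is an admissible test set in the suprema defining $r_\Omega(\omega_1)$ and $r_\Omega(\omega_2)$, which immediately gives $r(K\cap\omega_i)\le r_\Omega(\omega_i)$ for $i=1,2$.

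Next I would write $K\cap(\omega_1\cup\omega_2)=(K\cap\omega_1)\cup(K\cap\omega_2)$ and apply the preceding subadditivity of $r$ (the unmodified radius) to the compact sets $K\cap\omega_1$ and $K\cap\omega_2$, obtaining
\begin{equation*}
r\bigl(K\cap(\omega_1\cup\omega_2)\bigr)\le r(K\cap\omega_1)+r(K\cap\omega_2)\le r_\Omega(\omega_1)+r_\Omega(\omega_2).
\end{equation*}
Since the right-hand side is independent of $K$, taking the supremum over all admissible $K$ on the left-hand side yields $r_\Omega(\omega_1\cup\omega_2)\le r_\Omega(\omega_1)+r_\Omega(\omega_2)$, as desired.

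There is no real obstacle here; the only point to be careful about is the logical equivalence $\partial K\cap(\omega_1\cup\omega_2)=\varnothing\iff \partial K\cap\omega_1=\varnothing\text{ and }\partial K\cap\omega_2=\varnothing$, which is what makes the admissible competitors for the left-hand supremum a subfamily of the admissible competitors for each right-hand supremum, so that the plain subadditivity of $r$ transfers cleanly to $r_\Omega$.
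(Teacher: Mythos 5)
Your proof is correct and follows essentially the same route as the paper's: fix an admissible $K$ for $\omega_1\cup\omega_2$, observe that $K$ is then admissible for each $\omega_i$ individually, apply subadditivity of the plain radius $r$ to $K\cap\omega_1$ and $K\cap\omega_2$, and take the supremum. The paper also dispatches the degenerate case $\Omega\subset\omega_1\cup\omega_2$ separately, but this is an inessential bookkeeping step since your argument handles it automatically (the supremum is then over $K=\varnothing$ alone).
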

\begin{proof}
If $\Omega \subset \omega_1 \cup \omega_2$, then the result is trivial.  Suppose otherwise.  Let $K \subset \Omega$ be such that $K$ is compact and $\partial K \cap (\omega_1 \cup \omega_2) = \varnothing$.  Then $(\partial K \cap \omega_1) \cup (\partial K \cap \omega_2) = \varnothing$, which implies that $\partial K \cap \omega_1 =\varnothing$ and $\partial K \cap \omega_2 =\varnothing$.  Hence,
\begin{equation}
\begin{split}
r(K \cap (\omega_1 \cup \omega_2)) &= r((K\cap \omega_1)\cup(K\cap \omega_2)) \\
&\le r(K\cap \omega_1) + r(K\cap \omega_2) \\
&\le r_{\Omega}(\omega_1) + r_{\Omega}(\omega_2).
\end{split}
\end{equation}
Taking the supremum over all such $K$, we get $r_{\Omega}(\omega_1 \cup \omega_2) \le r_{\Omega}(\omega_1) + r_{\Omega}(\omega_2).$
\end{proof}

We will now use these concepts to compare the energy of a real-valued function $\rho$, defined on an open set $\Omega$, to the radius of the set where $\rho$ is far from unity.

\begin{lem}\label{e_rad_bound}
Let $\rho \in C^1(\Omega,\mathbb{R})$ with $\Omega \subset \Rn{2}$
open and bounded.  Let
\begin{equation}
F_{\varepsilon}(\rho,\Omega) = \frac{1}{2} \int_{\Omega} \abs{\nabla\rho}^2 + \frac{1}{2\varepsilon^2} (1-\rho^2)^2.
\end{equation}
Then there is a universal constant $C$ such that
\begin{equation}
    r_{\Omega}\left(\left\{\rho \le 1/2 \right\} \cup \left\{ \rho \ge 3/2 \right\}\right) \le \varepsilon C F_{\varepsilon}(\rho,\Omega).
\end{equation}
\end{lem}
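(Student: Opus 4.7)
The plan is to combine Lemma~\ref{rad_subadd} (subadditivity), Lemma~\ref{haus_bound} (radius vs.\ Hausdorff measure), the co-area formula, and an AM-GM inequality to convert into the Ginzburg--Landau energy.

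By Lemma~\ref{rad_subadd},
$$ r_\Omega(\{\rho \le 1/2\} \cup \{\rho \ge 3/2\}) \le r_\Omega(\{\rho \le 1/2\}) + r_\Omega(\{\rho \ge 3/2\}),$$
so it suffices to estimate each piece; I focus on $\omega := \{\rho \le 1/2\}$. For any $t \in (1/2, 3/4)$ we have $\omega \subset \{\rho \le t\}$, and the topological boundary of $\{\rho \le t\}$ in $\Omega$ lies in $\{\rho=t\}\cap\Omega$. Combining monotonicity of $r_\Omega$ under set inclusion with Lemma~\ref{haus_bound} yields
$$ 2\,r_\Omega(\omega) \le 2\,r_\Omega(\{\rho\le t\}) \le \mathcal{H}^1(\partial\{\rho\le t\}\cap\Omega)\le\mathcal{H}^1(\{\rho=t\}\cap\Omega).$$
Since the left-hand side is independent of $t$, I average over $t \in (1/2, 3/4)$ and invoke the co-area formula:
$$ r_\Omega(\omega) \le 2\int_{1/2}^{3/4} \mathcal{H}^1(\{\rho=t\}\cap\Omega)\,dt = 2\int_{\{1/2<\rho<3/4\}\cap\Omega} |\nabla\rho|\,dx.$$

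To convert to $F_\varepsilon$, I apply the Modica--Mortola-style AM-GM bound $\tfrac{1}{2}|\nabla\rho|^2 + \tfrac{1}{2\varepsilon^2}(1-\rho^2)^2 \ge \tfrac{1}{\varepsilon}|\nabla\rho||1-\rho^2|$. On $\{1/2 < \rho < 3/4\}$ one has $|1-\rho^2| \ge 7/16$, so
$$ \int_{\{1/2<\rho<3/4\}} |\nabla\rho| \le \tfrac{16}{7}\int_\Omega |\nabla\rho||1-\rho^2| \le \tfrac{16}{7}\varepsilon F_\varepsilon(\rho,\Omega),$$
giving $r_\Omega(\omega) \le C\varepsilon F_\varepsilon(\rho,\Omega)$. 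The symmetric argument on the range $(5/4, 3/2)$, where $|1-\rho^2| \ge 9/16$, handles $\{\rho \ge 3/2\}$, completing the proof.

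The delicate point is justifying the monotonicity of $r_\Omega$ used above, since its definition involves test sets $K$ with $\partial K \cap \omega = \varnothing$ whose admissible class shrinks as $\omega$ enlarges, so monotonicity is not immediate from the definition. I would handle this by working $K$-by-$K$: for any admissible $K$ for $\omega$, compactness of $\partial K$ and continuity of $\rho$ force $\min_{\partial K}\rho > 1/2$, and a co-area averaging on a subinterval of $(1/2, \min_{\partial K}\rho)$ (in which the $|\nabla\rho|$ integrand is uniformly controlled via the AM-GM bound above) produces $t^*$ with $\partial K \cap \{\rho \le t^*\} = \varnothing$; then $\partial(K\cap\{\rho\le t^*\}) \subset K\cap\{\rho=t^*\}$ and the basic non-relative radius/Hausdorff bound $2r \le \mathcal{H}^1(\partial)$ applies. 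Taking the supremum over such $K$ recovers the desired estimate. Truncation arguments of exactly this style are standard and appear in Section~4.4 of \cite{ss_book}.
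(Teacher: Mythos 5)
Your proof takes essentially the same route as the paper's: Cauchy--Schwarz/AM--GM together with the co-area formula to bound level-set lengths by $\varepsilon F_\varepsilon$, selection of a good level in each of $(1/2,3/4)$ and $(5/4,3/2)$ (you average, the paper invokes the mean value theorem --- equivalent), and Lemmas~\ref{haus_bound} and~\ref{rad_subadd}.  You invoke subadditivity first and the energy estimate last, reversing the paper's order; the constants $7/16$ and $9/16$ match.

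Two remarks.  First, the monotonicity of $r_\Omega$ under set inclusion that you flag as delicate is indeed not immediate from the definition, but the paper's own proof uses it silently: the passage from $r_\Omega(S_{t_0})$ to $r_\Omega(\{\rho\le 1/2\})$ in display~\eqref{erb_5} is precisely that monotonicity, tacitly treated as one of the omitted facts from Section~4.4 of \cite{ss_book}.  A citation there is the cleanest way to handle it.  Second, your $K$-by-$K$ sketch as written does not quite close the gap: the interval $(1/2,\min_{\partial K}\rho)$ over which you propose to average can be arbitrarily short (when $\rho$ comes close to $1/2$ on $\partial K$), so the mean-value choice of $t^*$ there does not yield a bound on $\mathcal{H}^1(\{\rho=t^*\}\cap\Omega)$ that is uniform over admissible $K$, and the bound on $r(K\cap\omega)$ you obtain degenerates before you take the supremum.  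Making this rigorous requires choosing $t^*$ in a fixed interval independent of $K$ and then producing an admissible test set for that fixed level, which takes more care than the sketch (or the paper's proof) supplies.  There is also a small technicality worth noting: Lemma~\ref{haus_bound} is stated for \emph{compact} $\omega$, whereas $\{\rho\le t\}$ is closed in $\Omega$ but may fail to be closed in $\Rn{2}$; this is why the paper works with the $\Rn{2}$-closures $S_{t_0}$, $S^{t_1}$ of the level sets rather than the level sets themselves.
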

\begin{proof}
By the Cauchy-Schwarz inequality and the co-area formula we have that
\begin{equation}\label{erb_1}
\begin{split}
F_{\varepsilon}(\rho,\Omega) & = \frac{1}{2} \int_{\Omega} \abs{\nabla\rho}^2 + \frac{1}{2\varepsilon^2} (1-\rho^2)^2 \\
& \ge \frac{1}{\varepsilon\sqrt{2}} \int_{\Omega} \abs{\nabla \rho}\abs{1-\rho^2} \\
&= \frac{1}{\varepsilon\sqrt{2}} \int_0^{\infty} \int_{\{\rho=t\}\cap \Omega} \abs{1-\rho^2} d\mathcal{H}^1 dt\\
&= \frac{1}{\varepsilon\sqrt{2}} \int_0^{\infty} \abs{1-t^2} \mathcal{H}^1(\{\rho=t\}\cap \Omega)dt.
\end{split}
\end{equation}
We break the last integral into two parts and bound
\begin{equation}\label{erb_2}
\begin{split}
&\frac{1}{\varepsilon\sqrt{2}} \int_0^{\infty} \abs{1-t^2} \mathcal{H}^1(\{\rho=t\}\cap \Omega)dt\\
&\ge \frac{1}{\varepsilon\sqrt{2}} \int_{\frac{1}{2}}^{\frac{3}{4}} (1-t^2) \mathcal{H}^1(\{\rho=t\}\cap \Omega)dt +
\frac{1}{\varepsilon\sqrt{2}} \int_{\frac{5}{4}}^{\frac{3}{2}} (t^2-1) \mathcal{H}^1(\{\rho=t\}\cap \Omega)dt \\
&= \frac{1}{\varepsilon 4 \sqrt{2}} (1-t_0^2)\mathcal{H}^1(\{\rho=t_0\}\cap \Omega) +
\frac{1}{\varepsilon 4\sqrt{2}} (t_1^2-1)\mathcal{H}^1(\{\rho=t_1\}\cap \Omega),
\end{split}
\end{equation}
where the last equality follows from the mean value theorem, and $t_0 \in (\frac{1}{2},\frac{3}{4})$ and $t_1 \in (\frac{5}{4},\frac{3}{2})$.  The bounds on $t_0$ and $t_1$ imply that
\begin{equation}\label{erb_3}
\begin{split}
& (1-t_0^2) \ge  1 -\frac{9}{16} = \frac{7}{16} \text{, and}\\
& (t_1^2-1) \ge \frac{25}{16} -1 = \frac{9}{16}.
\end{split}
\end{equation}
Combining \eqref{erb_1}, \eqref{erb_2}, and \eqref{erb_3}, we get
\begin{equation}\label{erb_4}
\begin{split}
F_{\varepsilon}(\rho,\Omega) &\ge \frac{7}{\varepsilon 64 \sqrt{2}} \mathcal{H}^1(\{\rho=t_0\}\cap \Omega) + \frac{9}{\varepsilon 64 \sqrt{2}} \mathcal{H}^1(\{\rho=t_1\}\cap \Omega)\\
& \ge \frac{7}{\varepsilon 64 \sqrt{2}} \left(\mathcal{H}^1(\{\rho=t_0\}\cap \Omega) +\mathcal{H}^1(\{\rho=t_1\}\cap \Omega)\right).
\end{split}
\end{equation}
Write $S_{t_0}$ and $S^{t_1}$ for the $\Rn{2}$-closures of the sets $\{ x\in \Omega \;\vert\; \rho(x) \le t_0\}$ and $\{ x\in \Omega \;\vert\; \rho(x) \ge t_1\}$ respectively.  The bounds  $t_0 \ge \frac{1}{2}$, $t_1 \le \frac{3}{2}$ imply the inclusions $\{ \rho \le 1/2\} \subset S_{t_0}$ and $\{\rho \ge 3/2\} \subset S^{t_1}$.  We may then apply lemmas \ref{haus_bound} and \ref{rad_subadd}  to find the bounds
\begin{equation}\label{erb_5}
\begin{split}
\mathcal{H}^1(\{\rho=t_0\}\cap \Omega)
+\mathcal{H}^1(\{\rho=t_1\}\cap \Omega)
&=\mathcal{H}^1(\partial S_{t_0}  \cap \Omega) +\mathcal{H}^1(\partial S^{t_1} \cap \Omega) \\
&\ge 2 r_{\Omega}(S_{t_0}) + 2 r_{\Omega}(S^{t_1}) \\
&\ge 2 r_{\Omega}\left(\left\{\rho \le 1/2 \right\}\right) + 2 r_{\Omega}\left(\left\{ \rho \ge 3/2 \right\}\right) \\
&\ge 2 r_{\Omega}\left(\left\{\rho \le 1/2 \right\} \cup \left\{ \rho \ge 3/2 \right\}\right).
\end{split}
\end{equation}
Putting \eqref{erb_5} into \eqref{erb_4} yields the desired estimate with $C = \frac{32\sqrt{2}}{7}$.
\end{proof}

\section{Improved lower bounds on annuli}

In this section we will show how to obtain lower bounds for the Ginzburg-Landau energy in terms of the degree.  We begin by constructing estimates on circles.  The primary difference between our estimates and those constructed previously is that we arrive at our lower bounds by introducing an auxiliary function $G$ and using a completion of the square trick.  This allows us to retain terms involving $G$ and thereby create an energy bound with a novel term.  Before properly defining $G$ let us prove the lower bounds on circles.

We first record a simple lemma (see for example Lemma 3.4 in
\cite{ss_book}).

\begin{lem}\label{polar}
Let $u\in H^1(\Omega,\mathbb{C})$ be written (at least locally) $u = \rho v$, where $\rho = \abs{u}$ and $v=e^{i\varphi}$.  Then $\abs{\nabla_A u }^2 = \abs{\nabla \rho}^2 + \rho^2 \abs{\nabla \varphi - A}^2 = \abs{\nabla \rho}^2 + \rho^2\abs{\nabla_A v}^2$.
\end{lem}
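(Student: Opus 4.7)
The plan is to proceed by a direct computation, using only the product rule, the definition $\nabla_A = \nabla - iA$, and the fact that when a complex number is written as (real) $+\, i\,$(real), the squared modulus splits as the sum of the squares.

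First I would differentiate the local decomposition $u = \rho v = \rho e^{i\varphi}$. By the product rule,
\[
\nabla u = (\nabla \rho)\, e^{i\varphi} + i \rho (\nabla \varphi)\, e^{i\varphi} = e^{i\varphi}\bigl( \nabla \rho + i\rho \nabla \varphi \bigr),
\]
and therefore
\[
\nabla_A u = \nabla u - iAu = e^{i\varphi}\bigl( \nabla \rho + i\rho (\nabla \varphi - A) \bigr).
\]
Taking the modulus squared and using $\lvert e^{i\varphi}\rvert = 1$, I would then observe that $\nabla \rho$ is a real-valued vector field and $\rho(\nabla \varphi - A)$ is likewise real-valued (componentwise), so that the vector $\nabla \rho + i \rho(\nabla \varphi - A)\in \mathbb{C}^2$ has orthogonal real and imaginary parts (in the sense that $\lvert a + ib\rvert^2 = \lvert a\rvert^2 + \lvert b\rvert^2$ for $a,b\in \mathbb{R}^2$). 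This immediately yields
\[
\abs{\nabla_A u}^2 = \abs{\nabla \rho}^2 + \rho^2 \abs{\nabla \varphi - A}^2,
\]
which is the first equality.

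For the second equality I would apply the same computation to $v = e^{i\varphi}$: since $\nabla v = i(\nabla \varphi) e^{i\varphi}$, one has $\nabla_A v = i e^{i\varphi}(\nabla \varphi - A)$, so that $\abs{\nabla_A v}^2 = \abs{\nabla \varphi - A}^2$. Multiplying by $\rho^2$ identifies the second factor above with $\rho^2 \abs{\nabla_A v}^2$.

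There is no real obstacle; the only subtleties worth flagging are that the phase $\varphi$ is only defined locally (hence the parenthetical ``at least locally'' in the statement), and that one should justify extending the pointwise identity from the smooth case to $H^1$ by a standard density or a.e.\ argument on the set $\{\rho > 0\}$ (on $\{\rho = 0\}$ the right-hand side is interpreted trivially). Neither issue affects the algebraic identity itself.
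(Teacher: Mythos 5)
Your computation is correct and is exactly the standard derivation; the paper itself does not prove this lemma but merely cites Lemma 3.4 of \cite{ss_book}, where the same polar-decomposition computation appears. The remark about working locally (since $\varphi$ is only locally defined) and extending from smooth to $H^1$ by density is the right thing to flag and matches the intent of the statement.
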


Now we prove the lower bounds on circles.

\begin{lem}\label{low_boundary}
Let $B:=B(a,r) \subset \Rn{2}$, and suppose that $v:\partial B \rightarrow \mathbb{S}^1$ and $A:B \rightarrow \Rn{2}$ are both $C^1$.  Let $G:\partial B \rightarrow \Rn{2}$ be given by $G=\frac{c \tau}{r}$, where $\tau$ is the oriented unit tangent vector field to $\partial B$ and $c$ is a constant.  Write $d_B :=\deg(v,\partial B)$. Then for any $\lambda >0$,
\begin{equation}\label{l_b_1}
\frac{1}{2}\int_{\partial B} \abs{\nabla_A v}^2 + \frac{\lambda}{2}\int_B (\curl{A})^2 \ge \frac{1}{2} \int_{\partial B} \abs{\nabla_{A+G} v}^2 + \frac{\pi}{r}(2cd_B-c^2) -\frac{\pi c^2}{2\lambda}.
\end{equation}
\end{lem}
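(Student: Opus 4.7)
The plan is to prove this identity-with-remainder by a completion of the square on $\partial B$, then to control the unavoidable mixed term that involves $\int_B \curl A$ by Young's inequality with parameter $\lambda$.

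First I would reduce to a pointwise algebraic identity on $\partial B$. Since $v$ maps the circle to $\mathbb{S}^1$, write $v=e^{i\varphi}$ on $\partial B$; the tangential covariant derivative satisfies $|\nabla_A v|^2 = (\partial_\tau \varphi - A\cdot \tau)^2$ along $\partial B$ (this is really Lemma \ref{polar} restricted to the circle, with $\rho\equiv 1$). With $G = c\tau/r$ we have $G\cdot \tau = c/r$ pointwise on $\partial B$, so setting $x := \partial_\tau \varphi - A\cdot \tau$ the elementary identity
\begin{equation*}
x^2 - (x - c/r)^2 = \frac{2cx}{r} - \frac{c^2}{r^2}
\end{equation*}
gives $|\nabla_A v|^2 - |\nabla_{A+G}v|^2 = \frac{2c}{r}(\partial_\tau\varphi - A\cdot \tau) - \frac{c^2}{r^2}$.

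Next I would integrate this along $\partial B$ and identify the two canonical geometric quantities that appear. The $\partial_\tau \varphi$ term integrates to $2\pi d_B$ by the definition of degree (\ref{degredefintro}); the $A\cdot \tau$ term is converted to a bulk quantity by Stokes' theorem, $\int_{\partial B} A\cdot \tau\, d\mathcal{H}^1 = \int_B \curl A$; and the constant term integrates to $\frac{c^2}{r^2}\cdot 2\pi r = \frac{2\pi c^2}{r}$. Dividing by $2$, this yields exactly
\begin{equation*}
\tfrac{1}{2}\int_{\partial B}|\nabla_A v|^2 = \tfrac{1}{2}\int_{\partial B}|\nabla_{A+G}v|^2 + \frac{\pi(2cd_B - c^2)}{r} - \frac{c}{r}\int_B \curl A.
\end{equation*}

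The only remaining step, and the only quantitatively delicate one, is to absorb the sign-indefinite remainder $-\frac{c}{r}\int_B \curl A$. I would estimate it by Cauchy--Schwarz, $\bigl|\int_B\curl A\bigr|\le \sqrt{\pi r^2}\,\bigl(\int_B(\curl A)^2\bigr)^{1/2}$, so the remainder is bounded in absolute value by $c\sqrt{\pi}\bigl(\int_B(\curl A)^2\bigr)^{1/2}$. A weighted Young inequality with the free parameter $\lambda$, namely $c\sqrt{\pi}\,X \le \tfrac{\pi c^2}{2\lambda} + \tfrac{\lambda}{2}X^2$, then produces precisely the $-\tfrac{\pi c^2}{2\lambda}$ error and the $\tfrac{\lambda}{2}\int_B(\curl A)^2$ term to be moved to the left-hand side, yielding the desired inequality (\ref{l_b_1}).

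There is no real obstacle here; the entire proof is algebraic once $G\cdot \tau = c/r$ is observed. The one point worth noting is the role of $\lambda$: keeping it as a free parameter rather than fixing it is what later lets the bulk magnetic-field term be balanced against the annulus contributions when this lemma is summed over the ball-growth scales in Section 4, so no fine choice of $\lambda$ should be made inside the lemma itself.
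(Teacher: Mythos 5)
Your proof is correct and is essentially the same argument as the paper's: both expand the difference $\abs{\nabla_A v}^2 - \abs{\nabla_{A+G}v}^2$ via Lemma \ref{polar} and the orthogonality of $G$ to the normal direction, use the degree formula and Stokes for the two boundary integrals, and then trade the indefinite $\frac{c}{r}\int_B\curl A$ term against $\frac{\lambda}{2}\int_B(\curl A)^2$ (you via Cauchy--Schwarz plus Young, the paper via Hölder plus minimizing over $X := \int_B\curl A$, which are the same computation). The only cosmetic difference is that the paper groups the quadratic-in-$X$ expression and minimizes, while you do the pointwise completion of the square on $\partial B$ first and apply Young at the end.
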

\begin{proof}
Define the quantity
\begin{equation}\label{l_b_2}
    X := \int_B \curl{A} = \int_{\partial B} A\cdot \tau.
\end{equation}
We write $v = e^{i\varphi}$ and recall that $2 \pi d_B = \int_{\partial B} \nabla \varphi \cdot \tau$.
Using Lemma \ref{polar}, we see
\begin{equation}\label{l_b_3}
\begin{split}
\int_{\partial B} \abs{\nabla_{A+G} v}^2 &=  \int_{\partial B} \abs{\nabla \varphi -A -G}^2 \\
&=\int_{\partial B} \abs{G}^2 -2\int_{\partial B}G\cdot (\nabla \varphi -A) + \int_{\partial B}\abs{\nabla \varphi -A}^2\\
& = \frac{2\pi r c^2}{r^2} -\frac{2c}{r} \int_{\partial B} \nabla \varphi \cdot \tau
+ \frac{2c}{r}\int_{\partial B} A\cdot \tau + \int_{\partial B}\abs{\nabla_A v}^2 \\
& = \frac{2\pi c^2}{r} - \frac{2c}{r}2\pi d_B + \frac{2c}{r}X + \int_{\partial B}\abs{\nabla_A v}^2 \\
& = \frac{2\pi(c^2-2 c d_B)}{r} + \frac{2c}{r}X + \int_{\partial B}\abs{\nabla_A v}^2.
\end{split}
\end{equation}
An application of H\"{o}lder's inequality shows that
\begin{equation}\label{l_b_4}
\int_B (\curl{A})^2 \ge \frac{1}{\pi r^2} \left( \int_B \curl{A} \right)^2 = \frac{1}{\pi r^2} X^2.
\end{equation}
Combining \eqref{l_b_3} and \eqref{l_b_4} yields the inequality
\begin{multline}\label{l_b_5}
 \frac{1}{2}\int_{\partial B} \abs{\nabla_A v}^2 + \frac{\lambda}{2}\int_B (\curl{A})^2
 \ge \frac{1}{2} \int_{\partial B}\abs{\nabla_{A+G} v}^2 + \frac{\pi(2 c d_B-c^2)}{r} - \frac{c}{r}X + \frac{\lambda}{2\pi r^2}X^2.
\end{multline}
As $X$ varies, the minimum value of the right hand side occurs when $X=\frac{\pi c r}{\lambda}$.  Plugging this into \eqref{l_b_5} yields \eqref{l_b_1}.

\end{proof}

For this lemma to be useful we must construct a function $G:
\Omega \rightarrow \Rn{2}$ compatible with the ball growth lemma.
That is, since estimates will ultimately be added up over balls
$B$, $G$ must have the property that on each $\partial B$, $G =
\tau_{\partial B} \frac{c}{r}$ with $r$ the distance to the center
of $B$.  We will take advantage of the fact that $c$ was an
arbitrary constant; many of the following results are thus valid
with any choice of constants, and it is only much later that we
choose specific values.  Observe already, though, that taking
$c=d_B$ yields an improvement by the $\int \abs{\nabla_{A+G} v}^2$
term to the bounds constructed in Lemma 4.4 of \cite{ss_book}.
Unfortunately, we must choose a more complicated constant $c$ to
make the estimates in Sections \ref{thms} and \ref{l2_inf_of_G}
work.  We now show how to define such a $G$ so that it will be
useful analytically.

Let $\Omega \subset \Rn{2}$ be open and let $\{\mathcal{B}(t)\}_{t \in [0,s]}$ be a family of collections of closed, disjoint balls grown via the ball growth lemma from an initial collection $\mathcal{B}_0$ that covers the set on which $u$ is near $0$.  Let $\mathcal{G}$ denote the subcollection of balls in $\mathcal{B}(s)$ entirely contained in $\Omega$, and let $\mathcal{G}(t)$ denote the balls in $\mathcal{B}(t)$ that are contained in a ball from $\mathcal{G}$, i.e. that remain inside $\Omega$ for all $t$.  For each ball $B \in \mathcal{G}(t)$ we define several quantities.  Let $\tau_{\partial B}:\partial B \rightarrow \Rn{2}$ denote the oriented unit tangent vector field to $\partial B$, and let $a_B$ denote the center of $B$.  Let $d_B = \deg(u/\abs{u},\partial B)$; this is well-defined since the set on which $u$ vanishes is contained in $\mathcal{B}_0$.  Let $\beta_B$ denote a constant, to be specified later, with the property that if $B_1\in \mathcal{G}(t_1)$, $B_2 \in \mathcal{G}(t_2)$, and $B_2 = e^{t_2-t_1} B_1$ (i.e. $B_2$ is grown from $B_1$ without any mergings) then $\beta_{B_1}=\beta_{B_2}$.  In other words, the $\beta_B$ are constant over each annulus produced by the ball construction.  Let $T \subset [0,s]$ denote the finite set of times from the ball growth lemma at which a merging occurs in the growth of $\mathcal{G}(t)$.  We then define the function $G:\Omega \rightarrow \Rn{2}$ by
\begin{equation}\label{G_def}
G(x) = \begin{cases}
        \tau_{\partial B}(x) \frac{d_B \beta_B}{\abs{x-a_B}} &  \text{if} \ x \in \partial B \text{ for some } B\in \mathcal{G}(t), t\in[0,s]\backslash T \\
        0 & \text{otherwise}.
       \end{cases}
\end{equation}
The ball growth lemma guarantees that if $x \in \partial B$ for some $B\in \mathcal{G}(t), t\in[0,s]\backslash T$, then that $t$ is unique, and so $G(x)$ is well defined.  By construction, $G=0$ in $\cup_{B \in \mathcal{G}(0)}B$, and so we can use the above definition of $G$ to extend any function previously defined on $\cup_{B \in \mathcal{G}(0)}B$.  We will frequently do so.

Figure \ref{fig:picture} shows a simple example of balls grown near the boundary of $\Omega$.  Four initial balls, colored light gray, are grown into three final balls, labeled $B_1,B_2,B_3$.  The initial balls are first grown with by a conformal factor of $\tau = \log{2}$ until a merging in required in the balls that become $B_1$.  The result of this merging is the white ball contained in $B_1$.  The growth is then continued with a conformal factor of $\tau = \log(6/5)$ to produce the final balls.  The annuli on which $G$ is defined are colored in dark gray and black.  Since $B_3$ leaves the domain, $G$ is set to zero on the annuli inside it.  $G$ also vanishes on the white region contained in $B_1$.

\begin{figure}
\includegraphics[width=5 in]{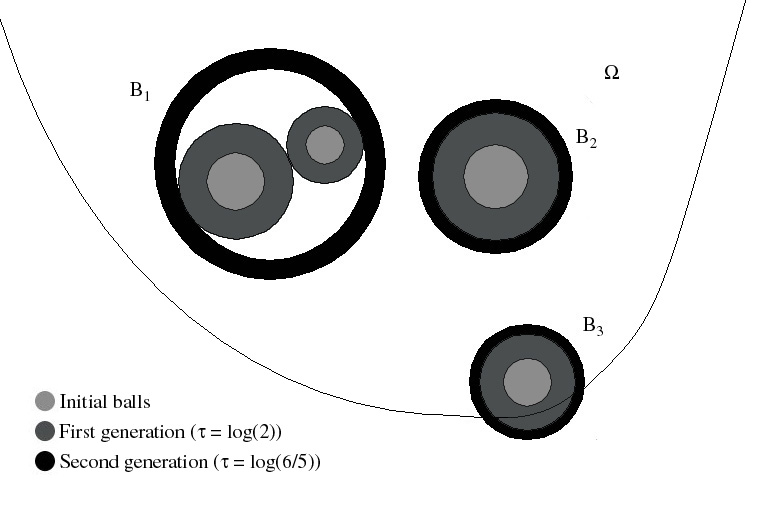}
\caption{Balls grown near the boundary of $\Omega$}
\label{fig:picture}
\end{figure}

With $G$ now properly defined we can show how to couple Lemma \ref{low_boundary} to the ball growth lemma to produce lower bounds on annuli.

\begin{prop}\label{low_growth}
Let $\mathcal{B}_0$ be a finite, disjoint collection of closed balls and let $\Omega \subseteq \Rn{2}$ be open.  Let $\omega = \cup_{B\in \mathcal{B}_0} B$ and denote the collection of balls obtained from $\mathcal{B}_0$ via the ball growth lemma by $\{\mathcal{B}(t)\}$,  $t\ge 0$.  Suppose that $v:\Omega \backslash \omega \rightarrow \mathbb{S}^1$ and $A:\Omega \rightarrow \Rn{2}$ are both $C^1$, and let $G:\Omega \rightarrow \Rn{2}$ be the function defined by \eqref{G_def}.  Fix $s>0$ such that $r(\mathcal{B}(s))\le 1$.  Then, for any $\bar{B}\in \mathcal{B}(s)$ such that $\bar{B} \subset \Omega$, and any $\lambda >0$, we have
\begin{multline}\label{l_g_1}
\frac{1}{2} \int_{\bar{B}\backslash \omega} \abs{\nabla_A v}^2 + \frac{r(\bar{B}) \lambda}{2}\int_{\bar{B}} (\curl{A})^2 - \sum_{B\in \bar{B} \cap \mathcal{B}_0} \frac{r(B)\lambda}{2} \int_{B} (\curl{A})^2
\\ \ge  \frac{1}{2}\int_{\bar{B}\backslash \omega} \abs{\nabla_{A+G} v}^2 + \int_0^s \sum_{B \in \bar{B}\cap \mathcal{B}(t)} \pi d_{B}^2\left(2\beta_{B}-\beta_{B}^2-\frac{\beta_{B}^2r(\mathcal{B}(t))}{2\lambda} \right)dt,
\end{multline}
where we have written $d_B = \deg(u/\abs{u},\partial B)$.
\end{prop}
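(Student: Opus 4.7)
The plan is to package the left-hand side of \eqref{l_g_1} as the difference $\mathcal{F}(\bar B) - \mathcal{F}(\bar B \cap \mathcal{B}_0)$ for a suitable choice of $\mathcal{F}$, apply Lemma \ref{function_balls} to break it into a time integral (over the growing annuli) and a sum over merging times, and lower-bound each piece using Lemma \ref{low_boundary} pointwise on circles. Specifically, I would set
\begin{equation*}
\mathcal{F}(x,r) = \frac{1}{2}\int_{B(x,r)\backslash \omega} \abs{\nabla_A v}^2 + \frac{r\lambda}{2}\int_{B(x,r)}(\curl A)^2.
\end{equation*}
Since every $B\in\mathcal{B}_0$ satisfies $B\subset \omega$, the first term of $\mathcal{F}(B)$ vanishes for such $B$, and one checks the LHS of \eqref{l_g_1} is exactly $\mathcal{F}(\bar B)-\mathcal{F}(\bar B\cap\mathcal{B}_0)$. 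Differentiating inside $\mathcal{F}$ gives
\begin{equation*}
r\frac{\partial\mathcal{F}}{\partial r}(x,r) = \frac{r}{2}\int_{\partial B(x,r)}\abs{\nabla_A v}^2 + \frac{r\lambda}{2}\int_{B(x,r)}(\curl A)^2 + \frac{r^2\lambda}{2}\int_{\partial B(x,r)}(\curl A)^2,
\end{equation*}
and the last (nonnegative) term can be discarded.

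On each circle $\partial B$ appearing at a non-merging time $t$, I would apply Lemma \ref{low_boundary} with the same parameter $\lambda$ and the choice $c = d_B \beta_B$, which by construction matches the value of $G|_{\partial B}$. Multiplying the resulting inequality by $r=r(B)$ and using the coarse bound $r(B)\le r(\mathcal{B}(t))$ in the negative term $-\pi c^2 r/(2\lambda)$, I obtain
\begin{equation*}
r\frac{\partial\mathcal{F}}{\partial r}(x,r) \ge \frac{r}{2}\int_{\partial B}\abs{\nabla_{A+G} v}^2 + \pi d_B^2\left(2\beta_B-\beta_B^2-\frac{\beta_B^2\, r(\mathcal{B}(t))}{2\lambda}\right).
\end{equation*}
Summing over $B\in \bar B\cap \mathcal{B}(t)$ and integrating in $t$, the change of variable $dr = r\, dt$ along each concentric annulus recombines $\int r\int_{\partial B}\abs{\nabla_{A+G}v}^2\, dt$ into $\int\abs{\nabla_{A+G}v}^2\, dx$ over the annular part of $\bar B\setminus\omega$, while the second term is exactly the time integral in \eqref{l_g_1}.

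It remains to account for the merging contributions $\mathcal{F}(\bar B\cap\mathcal{B}(s_k))-\mathcal{F}(\bar B\cap\mathcal{B}(s_k))^-$. For the curl piece, when balls $B_1,\ldots,B_m$ with radii summing to $r$ merge into a ball $B\supset\bigcup B_i$ of radius $r$, the identity $r\int_B(\curl A)^2 = \sum_i r_i\int_B(\curl A)^2 \ge \sum_i r_i\int_{B_i}(\curl A)^2$ shows this jump is $\ge 0$, so it may be dropped. For the gradient piece, the jump equals $\tfrac{1}{2}\int_{B\setminus(\omega\cup\bigcup_i B_i)}\abs{\nabla_A v}^2$, and since by the construction \eqref{G_def} the field $G$ vanishes on every such non-annular region, this integrand coincides with $\abs{\nabla_{A+G} v}^2$ there. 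Adding these merging contributions to the time-integral bound fills in precisely the non-annular portions of $\bar B\setminus\omega$, yielding the full $\tfrac12\int_{\bar B\setminus\omega}\abs{\nabla_{A+G}v}^2$ in \eqref{l_g_1}. The main technical obstacle I anticipate is the careful bookkeeping ensuring that the annular pieces swept out by $\{\partial B: B\in\mathcal{G}(t),\ t\notin T\}$ together with the non-annular merging regions partition $\bar B\setminus\omega$ (up to a null set), and that the definition of $G$ is consistent on each annulus so the pointwise estimate on $\partial B$ can be integrated up; this is exactly what the single-parameter ball growth of Lemma \ref{ball_growth_lemma} and the constancy of $\beta_B$ across conformally grown annuli guarantee.
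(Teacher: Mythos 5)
Your proposal follows essentially the same route as the paper: the choice of $\mathcal{F}$, the application of Lemma \ref{low_boundary} with $c=\beta_B d_B$ on each circle, the coarse bound $r(B)\le r(\mathcal{B}(t))$, the dropping of the nonnegative curl jump at merging times, and the identification of the gradient jump with the $\nabla_{A+G}$ integral over non-annular regions (where $G=0$) all match the published argument, which organizes these same steps through the localized version \eqref{fn_b_2} of Lemma \ref{function_balls}. The bookkeeping concern you flag at the end is indeed handled by Lemma \ref{function_balls} together with the structure of the ball growth lemma, exactly as you anticipate.
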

\begin{proof}
In order to utilize Lemma \ref{function_balls} we define the function
\begin{equation}\label{l_g_2}
\mathcal{F}(x,r) = \frac{1}{2} \int_{B(x,r)} \abs{\nabla_A v}^2 + \frac{r \lambda}{2}\int_{B(x,r)} (\curl{A})^2.
\end{equation}
Differentiating and using \eqref{l_b_1} with $c = \beta_B d_B$,  we arrive at the bound
\begin{equation}\label{l_g_3}
\begin{split}
\frac{\partial \mathcal{F}}{\partial r} & \ge \frac{1}{2} \int_{\partial B(x,r)} \abs{\nabla_A v}^2 + \frac{\lambda}{2}\int_{B(x,r)} (\curl{A})^2\\
& \ge \frac{1}{2} \int_{\partial B(x,r)} \abs{\nabla_{A+G} v}^2 + \frac{\pi d_B^2}{r}(2\beta_B-\beta_B^2) -\frac{\pi d_B^2\beta_B^2}{2\lambda}.
\end{split}
\end{equation}
We now recall the notation of Lemma \ref{function_balls}:
$0 < s_1 < \dotsb < s_K \le s$ denote the times at which merging occurs in the growth of $\mathcal{B}_0$ to $\mathcal{B}(s)$ via the ball growth lemma, and
\begin{equation}
\mathcal{F}(\bar{B} \cap \mathcal{B}(s_k))^{-}=\lim\limits_{t\rightarrow s_k^-} \mathcal{F}(\bar{B} \cap \mathcal{B}(t)).
\end{equation}
By discarding the terms involving $\curl{A}$, we see that
\begin{multline}\label{l_g_4}
\sum_{k=1}^{K} \mathcal{F}(\bar{B}\cap\mathcal{B}(s_k)) - \mathcal{F}(\bar{B} \cap \mathcal{B}(s_k))^{-}  \\ \ge  \sum_{k=1}^K \left(\sum_{B \in \bar{B} \cap \mathcal{B}(s_k)} \frac{1}{2} \int_{B} \abs{\nabla_A v}^2 - \lim_{t\rightarrow s_k^-} \sum_{B \in \bar{B} \cap \mathcal{B}(t)} \frac{1}{2} \int_{B} \abs{\nabla_A v}^2
\right),
\end{multline}
which corresponds to the integral of $\frac{1}{2}\abs{\nabla_{A+G} v}^2$ over the non-annular parts of $\bar{B}\backslash \omega$ since $G=0$ there.  Since the ball growth lemma makes
\begin{equation*}
\frac{d}{dt}r(\mathcal{B}(t)) = r(\mathcal{B}(t)),
\end{equation*}
the expression
\begin{equation*}
\int_0^s \sum_{B \in \bar{B}\cap \mathcal{B}(t)} \frac{r(B)}{2} \int_{\partial B} \abs{\nabla_{A+G} v}^2\, dt
\end{equation*}
corresponds to the integral of $\frac{1}{2}\abs{\nabla_{A+G} v}^2$ over the annular parts of $\bar{B} \backslash \omega$.  We now combine this observation, inequalities \eqref{l_g_3} and \eqref{l_g_4}, and equality \eqref{fn_b_2} to conclude that
\begin{equation}\label{l_g_5}
\begin{split}
&\mathcal{F}(\bar{B}) - \mathcal{F}(\bar{B}\cap \mathcal{B}_0) \\
&\ge \frac{1}{2} \int_{\bar{B} \backslash \omega} \abs{\nabla_{A+G} v}^2 + \int_0^s \sum_{B \in \bar{B}\cap \mathcal{B}(t)} \pi d_{B}^2\left(2\beta_{B}-\beta_{B}^2-\frac{\beta_{B}^2r(B)}{2 \lambda} \right)dt \\
& \ge \frac{1}{2} \int_{\bar{B}\backslash \omega} \abs{\nabla_{A+G}v}^2 + \int_0^s\sum_{B \in \bar{B}\cap \mathcal{B}(t)} \pi d_{B}^2\left(2\beta_{B}-\beta_{B}^2-\frac{\beta_{B}^2r(\mathcal{B}(t))}{2\lambda} \right)dt.
\end{split}
\end{equation}
This is \eqref{l_g_1}.
\end{proof}

The following corollary shows that our method, using $G$, can be used to recover the same estimates found in Proposition 4.3 of \cite{ss_book}.

\begin{cor}\label{low_growth_old}
Under the same assumptions as in Proposition \ref{low_growth} we have
\begin{equation}\label{l_g_o_1}
\frac{1}{2} \int_{\bar{B}\backslash \omega} \abs{\nabla_A v}^2 +
\frac{r(\bar{B})(r_1-r_0)}{2}\int_{\bar{B}} (\curl{A})^2 \ge
\int_0^s \sum_{B \in \bar{B}\cap \mathcal{B}(t)} \pi
d_{B}^2\left(1-\frac{r(\mathcal{B}(t))}{2(r_1-r_0)} \right)dt,
\end{equation}
and
\begin{equation}\label{l_g_o_2}
\frac{1}{2} \int_{\bar{B}\backslash \omega} \abs{\nabla_A v}^2 + \frac{r(\bar{B})(r_1-r_0)}{2}\int_{\bar{B}} (\curl{A})^2  \ge  \pi \abs{d_{\bar{B}}}\left( \log{\frac{r_1}{r_0}} - \log{2}\right),
\end{equation}
where $r_0 := r(\mathcal{B}_0)$ and $r_1 := r(\mathcal{B}(s)) = e^s r_0$.
\end{cor}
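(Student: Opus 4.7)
The plan is to apply Proposition \ref{low_growth} with the simplest possible choice of the free parameters, namely $\beta_B \equiv 1$ and $\lambda = r_1 - r_0$. Under these choices the weight $2\beta_B - \beta_B^2 - \beta_B^2 r(\mathcal{B}(t))/(2\lambda)$ appearing inside the integral on the right-hand side of \eqref{l_g_1} collapses to $1 - r(\mathcal{B}(t))/(2(r_1 - r_0))$, and the non-negative term $\frac{1}{2}\int_{\bar{B}\backslash\omega} \abs{\nab_{A+G} v}^2$ can simply be discarded. This produces \eqref{l_g_o_1} directly.

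For \eqref{l_g_o_2} I would split into two regimes according to the size of $s = \log(r_1/r_0)$. If $s < \log 2$, then $\pi \abs{d_{\bar{B}}}(\log(r_1/r_0) - \log 2) \le 0$, while the left-hand side of \eqref{l_g_o_2} is a sum of non-negative terms, so the inequality holds trivially. If $s \ge \log 2$, I would use the identity $r(\mathcal{B}(t)) = e^t r_0$ to rewrite the coefficient as $1 - e^t/(2(e^s - 1))$, and check that $e^t \le e^s \le 2(e^s - 1)$ whenever $e^s \ge 2$, so the coefficient is non-negative throughout $[0, s]$ in this regime.

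With the sign of the coefficient under control, I would substitute the elementary bound $\sum_{B \in \bar{B} \cap \mathcal{B}(t)} d_B^2 \ge \abs{d_{\bar{B}}}$. This rests on two observations: first, the annular region $\bar{B}\backslash \bigcup_{B \in \bar{B}\cap\mathcal{B}(t)} B$ is contained in $\Omega \backslash \omega$, so $v$ is $\mathbb{S}^1$-valued there and additivity of the winding number yields $\sum_{B \in \bar{B}\cap\mathcal{B}(t)} d_B = d_{\bar{B}}$; second, $d_B \in \mathbb{Z}$ gives $d_B^2 \ge \abs{d_B}$, which combined with the triangle inequality produces $\sum_B d_B^2 \ge \sum_B \abs{d_B} \ge \abs{\sum_B d_B}$. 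Evaluating
\begin{equation*}
\int_0^s \(1 - \frac{e^t}{2(e^s - 1)}\)\, dt = s - \frac{1}{2},
\end{equation*}
then yields a lower bound of $\pi \abs{d_{\bar{B}}}(s - \tfrac{1}{2})$, which is at least $\pi \abs{d_{\bar{B}}}(s - \log 2)$ since $\tfrac{1}{2} < \log 2$, proving \eqref{l_g_o_2}.

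The main (and essentially only) subtlety is controlling the sign of the coefficient before invoking the integer inequality $\sum d_B^2 \ge \abs{d_{\bar{B}}}$, which is precisely what forces the separate treatment of the trivial regime $s < \log 2$; no other obstacle appears.
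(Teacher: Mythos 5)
Your proof is correct and follows essentially the same route as the paper: set $\beta_B \equiv 1$ and $\lambda = r_1 - r_0$ in Proposition \ref{low_growth}, drop the non-negative $\abs{\nabla_{A+G}v}^2$ term, split on whether $s \ge \log 2$ to control the sign of the coefficient, then invoke $\sum d_B^2 \ge \sum \abs{d_B} \ge \abs{d_{\bar B}}$ and the growth identity $r(\mathcal{B}(t)) = e^t r_0$. The only cosmetic difference is that you carry out the time integral explicitly (obtaining the slightly sharper constant $\tfrac{1}{2}$, then weakening to $\log 2$), whereas the paper cites the intermediate inequalities and leaves the integration implicit.
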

\begin{proof}
Set $\lambda = r_1 - r_0$, each $\beta_B = 1$, and disregard the $\abs{\nabla_{A+G} v}$ term and the curl terms on $\mathcal{B}_0$ in \eqref{l_g_1} to get \eqref{l_g_o_1}.  If $\log{\frac{r_1}{r_0}} < \log{2}$, then \eqref{l_g_o_2} follows trivially.  On the other hand, if $\log{\frac{r_1}{r_0}} \ge \log{2}$, then $r_1 \ge 2 r_0$, which implies
\begin{equation}\label{l_g_o_4}
1-\frac{r(\mathcal{B}(t))}{2(r_1-r_0)} \ge 1-\frac{r_1}{2(r_1-r_0)} = \frac{r_1-2r_0}{2(r_1-r_0)} \ge 0.
\end{equation}
Then \eqref{l_g_o_2} follows by noting that $r_1=e^s r_0$,
\begin{equation}\label{l_g_o_3}
\frac{d}{dt}r(\mathcal{B}(t)) = r(\mathcal{B}(t)),
\end{equation}
and (see Lemma 4.2 in \cite{ss_book})
\begin{equation}\label{l_g_o_5}
\sum_{B \in \bar{B}\cap \mathcal{B}(t)} d_{B}^2 \ge \sum_{B \in \bar{B}\cap \mathcal{B}(t)} \abs{d_{B}} \ge \abs{d_{\bar{B}}}.
\end{equation}
\end{proof}

We will need the following modification of the previous corollary later.  It is a slight modification of Proposition 4.3 from \cite{ss_book}.

\begin{lem}\label{alt_low_growth_old}
Under the same assumptions as in Proposition \ref{low_growth} we have
\begin{equation}
\frac{1}{2} \int_{\bar{B}\backslash \omega} \abs{\nabla_A v}^2 + \frac{r(\bar{B})r_1}{2}\int_{\bar{B}} (\curl{A})^2 \\ \ge  \frac{2\pi}{3} \int_0^s \sum_{B \in \bar{B}\cap \mathcal{B}(t)}  d_{B}^2 \;dt.
\end{equation}
\end{lem}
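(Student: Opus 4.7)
The plan is to apply Proposition \ref{low_growth} directly with carefully chosen values of the free parameters $\lambda$ and $\beta_B$. Specifically, I would set $\lambda = r_1 = r(\mathcal{B}(s))$, rather than $\lambda = r_1 - r_0$ as in Corollary \ref{low_growth_old}, so that the curl coefficient on the left-hand side is exactly $r(\bar B) r_1 / 2$; and I would take $\beta_B = 2/3$ for every ball $B$ appearing in $\mathcal{G}(t)$ over $t \in [0,s]$.

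With these choices, the left-hand side of \eqref{l_g_1} becomes
\[
\frac{1}{2} \int_{\bar{B}\backslash \omega} \abs{\nabla_A v}^2 + \frac{r(\bar{B}) r_1}{2}\int_{\bar{B}} (\curl{A})^2 - \sum_{B\in \bar{B} \cap \mathcal{B}_0} \frac{r(B) r_1}{2} \int_{B} (\curl{A})^2,
\]
which is dominated by the target left-hand side since the last sum is nonnegative. On the right-hand side I would discard the nonnegative $\abs{\nabla_{A+G} v}^2$ integral. The coefficient $\bigl(2\beta_B - \beta_B^2 - \beta_B^2 r(\mathcal{B}(t))/(2\lambda)\bigr)$ in the remaining integrand then equals $\tfrac{4}{3} - \tfrac{4}{9} - \tfrac{2}{9}\,r(\mathcal{B}(t))/r_1$, and part 4 of the ball growth lemma gives $r(\mathcal{B}(t)) = e^t r(\mathcal{B}_0) \le e^s r(\mathcal{B}_0) = r_1$ for all $t \in [0,s]$. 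Hence this coefficient is bounded below by $\tfrac{4}{3} - \tfrac{4}{9} - \tfrac{2}{9} = \tfrac{2}{3}$, producing the desired factor $\tfrac{2\pi}{3} d_B^2$ under the time integral.

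There is essentially no obstacle here: the value $\beta_B = 2/3$ is simply the maximizer of $2\beta - \tfrac{3}{2}\beta^2$, which is what remains once one bounds $r(\mathcal{B}(t))/(2\lambda) \le 1/2$. The tradeoff with Corollary \ref{low_growth_old} is transparent: there, the choices $\lambda = r_1 - r_0$ and $\beta_B = 1$ gave the sharper pointwise coefficient $\pi d_B^2\bigl(1 - r(\mathcal{B}(t))/(2(r_1-r_0))\bigr)$, at the cost of having only $r_1 - r_0$ multiply the curl term. The present version accepts a constant of $\tfrac{2\pi}{3}$ in exchange for a curl coefficient proportional to $r_1$ itself, which is the form needed in applications where $r_0$ cannot be assumed small relative to $r_1$.
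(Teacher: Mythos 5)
Your proof is correct, but it takes a different route from the paper's. The paper cites Lemma 4.4 of \cite{ss_book}, which is the circle lower bound with the constant already optimized: it reads $\frac{1}{2}\int_{\partial B}\abs{\nabla_A v}^2 + \frac{\lambda}{2}\int_B(\curl A)^2 \ge \pi\frac{d_B^2}{r}\cdot\frac{2\lambda}{2\lambda+r}$. With $\lambda=r_1$ and $r\le r_1$ one bounds $\frac{2r_1}{2r_1+r}\ge\frac{2}{3}$ and integrates through the ball growth machinery exactly as in Corollary \ref{low_growth_old}. You instead stay entirely inside the framework built in this paper: you invoke Proposition \ref{low_growth} (hence Lemma \ref{low_boundary}), choose $\lambda=r_1$ and $\beta_B\equiv\frac{2}{3}$, discard the $\abs{\nabla_{A+G}v}^2$ term and the nonnegative subtracted curl sum, and use $r(\mathcal{B}(t))\le r_1$. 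Since Lemma 4.4 of \cite{ss_book} is exactly what one obtains from Lemma \ref{low_boundary} after optimizing over $c$ and dropping the $G$-term, the two routes agree: at the worst case $r=r_1$ the optimal coefficient $\frac{2\lambda}{2\lambda+r}$ also equals $\frac{2}{3}$, and your choice $\beta=\frac{2}{3}$ is exactly the maximizer of $2\beta-\frac{3}{2}\beta^2$ that produces this value. Your approach is arguably more self-contained, as it avoids the external citation and demonstrates that the $G$-based lower bound on circles subsumes the classical one; the paper's version is marginally sharper pointwise for small $r$ (coefficient near $1$ rather than $\frac{8}{9}$), though this sharpness is thrown away in either case when passing to the uniform constant $\frac{2}{3}$.
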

\begin{proof}
 Lemma 4.4 from \cite{ss_book} provides the lower bound on circles, $\partial B= \partial B(a,r)$:
\begin{equation}
\frac{1}{2} \int_{\partial B} \abs{\nabla_A v}^2 + \frac{\lambda}{2} \int_B (\curl{A})^2 \ge \pi \frac{d_B^2}{r} \left( \frac{2\lambda}{2\lambda + r}\right).
\end{equation}
We now set $\lambda = r_1$, bound
\begin{equation*}
 \frac{2r_1}{2r_1 + r} \ge \frac{2}{3},
\end{equation*}
and proceed as before to conclude.

\end{proof}

\section{Initial and final balls}

In this section we record the energy estimates that couple to the
ball construction.  For technical reasons that will arise in the
proof of Theorem \ref{energy_bound} we must use the ball growth
lemma in two phases, just as in Chapter 4 of \cite{ss_book}.  The
first phase produces a collection of initial balls that cover the
set where $\abs{u}$ is far from unity and on which lower bounds of
a type needed in the proof of Theorem \ref{energy_bound} are
satisfied.  This initial collection contains as a subset a
collection of balls on which we initially define the function $G$.
The second phase produces a collection of final balls, grown from
the initial balls, of a chosen size and on which nice lower bounds
hold.  In the final section we finally specify the values of the
$\beta_B$ used to define $G$ and show that certain lower bounds
hold with this choice of constants.

\subsection{The initial balls}

Before we can produce the collection of initial balls, we must first produce a collection of balls that covers the set where $\abs{u}$ is far from unity.  This is accomplished via the following lemma (Proposition 4.8 from \cite{ss_book}), which shows how the radius of this set is controlled by the energy of $\abs{u}$.

\begin{lem}\label{init_set}
Let $M,\varepsilon,\delta>0$ be such that $\varepsilon,\delta<1$, and let $u \in C^1(\Omega,\mathbb{C})$ satisfy the bound $F_{\varepsilon}(\abs{u},\Omega) \le M$.  Then
\begin{equation}
r(\{x\in \Omega_{\varepsilon} \; | \; \abs{u(x)-1} \ge \delta \}) \le C\frac{\varepsilon M}{\delta^2}
\end{equation}
where C is a universal constant and $\Omega_{\varepsilon} = \{ x\in \Omega \;|\; d(x,\partial \Omega) > \varepsilon\}$.
\end{lem}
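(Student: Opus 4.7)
The plan is to adapt the argument of Lemma \ref{e_rad_bound}, adjusting the choice of level-set values to track the dependence on $\delta$. Writing $\rho = \abs{u}$ (and reading $\abs{u-1}$ as $\abs{\abs{u}-1}$, since only the energy of $\abs{u}$ is controlled), I begin with the same Cauchy--Schwarz plus coarea step:
\begin{equation*}
M \ge \frac{1}{\varepsilon\sqrt{2}} \int_0^\infty \abs{1-t^2}\,\mathcal{H}^1(\{\rho = t\} \cap \Omega)\,dt.
\end{equation*}

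Rather than integrating over the fixed intervals $(1/2, 3/4)$ and $(5/4, 3/2)$ as in \eqref{erb_2}, I restrict to the $\delta$-thin shells $(1-\delta, 1-\delta/2)$ and $(1+\delta/2, 1+\delta)$, on which $\abs{1-t^2} = \abs{1-t}(1+t) \ge \delta/2$ (using $\delta < 1$). The mean value theorem on each subinterval then produces values $t_0 \in (1-\delta, 1-\delta/2)$ and $t_1 \in (1+\delta/2, 1+\delta)$ with
\begin{equation*}
\mathcal{H}^1(\{\rho = t_0\} \cap \Omega) + \mathcal{H}^1(\{\rho = t_1\} \cap \Omega) \le \frac{C\varepsilon M}{\delta^2},
\end{equation*}
the $\delta^2$ arising as the product of the pointwise lower bound $\delta/2$ on $\abs{1-t^2}$ and the length $\delta/2$ of each averaging window.

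Letting $S_{t_0}, S^{t_1}$ denote the $\mathbb{R}^2$-closures of $\{\rho \le t_0\} \cap \Omega$ and $\{\rho \ge t_1\} \cap \Omega$, the choices $t_0 \ge 1-\delta$ and $t_1 \le 1+\delta$ ensure $\{\abs{\abs{u}-1} \ge \delta\} \subset S_{t_0} \cup S^{t_1}$, while their boundaries inside $\Omega$ are contained in the respective level sets $\{\rho = t_i\} \cap \Omega$. Lemmas \ref{haus_bound} and \ref{rad_subadd} then give
\begin{equation*}
r_\Omega(S_{t_0} \cup S^{t_1}) \le \tfrac{1}{2}\bigl(\mathcal{H}^1(\partial S_{t_0} \cap \Omega) + \mathcal{H}^1(\partial S^{t_1} \cap \Omega)\bigr) \le \frac{C\varepsilon M}{\delta^2}.
\end{equation*}

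The main technical obstacle is converting this $r_\Omega$ estimate into an ordinary $r$ bound on the subset lying inside $\Omega_\varepsilon$. I plan to use a Sard-type argument on $d(\cdot,\partial\Omega)$ to pick a slightly perturbed $\varepsilon' > \varepsilon$ for which $\partial\Omega_{\varepsilon'}$ misses the level sets $\{\rho = t_0\} \cup \{\rho = t_1\}$; then $K = \overline{\Omega_{\varepsilon'}}$ is admissible in the definition of $r_\Omega$, so the monotonicity of $r$ yields
\begin{equation*}
r\bigl(\{x \in \Omega_\varepsilon : \abs{\abs{u}-1} \ge \delta\}\bigr) \le r\bigl(K \cap (S_{t_0} \cup S^{t_1})\bigr) \le r_\Omega(S_{t_0} \cup S^{t_1}),
\end{equation*}
and sending $\varepsilon' \downarrow \varepsilon$ completes the estimate.
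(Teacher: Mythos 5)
The paper does not supply a proof of Lemma \ref{init_set}; it quotes it as Proposition 4.8 of \cite{ss_book}, so your task is to reconstruct an argument rather than match one. Your reading of $\abs{u-1}$ as $\abs{\abs{u}-1}$ is indeed the correct one (the hypothesis only controls $\abs{u}$; with the literal reading the conclusion fails already for $u\equiv -1$), and the first two-thirds of your argument work: Cauchy--Schwarz plus coarea as in \eqref{erb_1}, the $\delta$-thin shells $(1-\delta,1-\delta/2)$, $(1+\delta/2,1+\delta)$ on which $\abs{1-t^2}\ge\delta/2$, the mean-value selection of $t_0,t_1$, and Lemmas \ref{haus_bound} and \ref{rad_subadd}, giving $r_\Omega(S_{t_0}\cup S^{t_1})\le C\varepsilon M/\delta^2$. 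That is exactly the expected adaptation of Lemma \ref{e_rad_bound}.

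The last step, converting $r_\Omega$ into $r$, does not go through. For $K=\overline{\Omega_{\varepsilon'}}$ to be admissible in the supremum defining $r_\Omega(S_{t_0}\cup S^{t_1})$ one needs $\partial K\cap(S_{t_0}\cup S^{t_1})=\varnothing$, i.e.\ $t_0<\rho<t_1$ at \emph{every} point of $\partial\Omega_{\varepsilon'}$. A Sard-type perturbation of $\varepsilon'$ can arrange that $\partial\Omega_{\varepsilon'}$ meets the \emph{level sets} $\{\rho=t_0\}\cup\{\rho=t_1\}$ transversally or not at all, but it cannot prevent $\partial\Omega_{\varepsilon'}$ from running through the sub- or super-level sets: nothing in the hypotheses constrains $u$ near $\partial\Omega$, so one can have $\abs{u}\equiv 0$ on an entire collar of $\partial\Omega$, in which case $\partial\Omega_{\varepsilon'}\subset S_{t_0}$ for every $\varepsilon'$ and no $K$ of this form is admissible. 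The chain $r(\{x\in\Omega_\varepsilon:\dotsb\})\le r(K\cap(S_{t_0}\cup S^{t_1}))\le r_\Omega(S_{t_0}\cup S^{t_1})$ therefore breaks. (There is also a direction slip: for the first inequality you need $\Omega_\varepsilon\subset K$, so $\varepsilon'\le\varepsilon$, not $\varepsilon'>\varepsilon$.)

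What is missing is a quantitative control on how much of the sub-/super-level sets crosses $\partial\Omega_{\varepsilon'}$, and this comes from the part of the energy you have not yet used, the potential term. On $\{\rho\le t_0\}\cup\{\rho\ge t_1\}$ one has $(1-\rho^2)^2\ge\delta^2/4$, whence $\abs{\{\rho\le t_0\}\cup\{\rho\ge t_1\}}\le 8\varepsilon^2M/\delta^2$. Applying the coarea formula to the $1$-Lipschitz function $d(\cdot,\partial\Omega)$ over $\varepsilon'\in(\varepsilon/2,\varepsilon)$ then produces a value $\varepsilon'$ in that range with
\begin{equation*}
\mathcal{H}^1\bigl(\partial\Omega_{\varepsilon'}\cap(S_{t_0}\cup S^{t_1})\bigr)\le \frac{C\varepsilon M}{\delta^2}.
\end{equation*}
Together with your bound $\mathcal{H}^1(\{\rho=t_0\}\cap\Omega)+\mathcal{H}^1(\{\rho=t_1\}\cap\Omega)\le C\varepsilon M/\delta^2$, this bounds the $\mathcal{H}^1$ measure of the full topological boundary of the compact set $\overline{\Omega_{\varepsilon'}}\cap(S_{t_0}\cup S^{t_1})$, and the unnumbered inequality $2r(\omega)\le\mathcal{H}^1(\partial\omega)$ (stated just before Lemma \ref{haus_bound}) together with $\Omega_\varepsilon\subset\Omega_{\varepsilon'}$ finishes, bypassing $r_\Omega$ entirely.
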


The next technical result shows how to bound from below the modified radius of sub- and super-level sets.

\begin{lem}\label{radius_bounds}  Let $\Omega\subset \Rn{2}$ be open,  $\Omega_{\varepsilon} = \{ x\in \Omega \;|\; d(x,\partial \Omega) > \varepsilon\}$,  and suppose $\mathcal{B}$ is a finite collection of disjoint, closed balls that cover the set
\begin{equation*}
\{x\in \Omega_{\varepsilon} \; \vert \; \abs{u(x)-1} \ge \delta \}.
\end{equation*}
Let $\mathcal{B}_b$ denote the subcollection of balls in $\mathcal{B}$ that intersect $\partial \Omega_{\varepsilon}$, and let $\mathcal{B}_i$ denote the subcollection of balls in $\mathcal{B}$ contained in the interior of $\Omega_{\varepsilon}$ (i.e. $\mathcal{B} = \mathcal{B}_b \cup \mathcal{B}_i$).  Define $\tilde{\Omega}=\Omega_{\varepsilon} \backslash (\cup_{B\in \mathcal{B}_b} B)$.  For $0<s \le t $ define the sets $\omega_t = \{ x \in \Omega_{\varepsilon} \;|\; \abs{u} \le t\}$, $\omega^t = \{ x \in \Omega_{\varepsilon} \;|\; \abs{u} \ge t\},$ and $\omega_s^t = \omega_s \cup \omega^t$.  Then
\begin{equation}\label{r_b_1}
\begin{split}
& r_{\Omega_{\varepsilon}}(\omega_t) \ge r(\omega_t \cap \tilde{\Omega}) \text{ for } t\in(0,1-\delta), \\
& r_{\Omega_{\varepsilon}}(\omega^t) \ge r(\omega^t \cap \tilde{\Omega}) \text{ for } t\in(1+\delta,\infty), \text{ and} \\
& r_{\Omega_{\varepsilon}}(\omega_s^t) \ge r(\omega_s^t \cap \tilde{\Omega}) \text{ for } s\in(0,1-\delta), t\in (1+\delta,\infty).
\end{split}
\end{equation}
\end{lem}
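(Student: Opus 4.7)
The three inequalities share a common structure, so I will describe the argument for the first, $r_{\Omega_{\varepsilon}}(\omega_t) \ge r(\omega_t \cap \tilde{\Omega})$ with $t \in (0, 1-\delta)$, and then indicate what changes in the other two cases. In all three cases the only property of the relevant set that the proof uses is that it is contained in $\{|u - 1| \ge \delta\}$, and hence, by the covering hypothesis, in $\bigcup_{B \in \mathcal{B}} B$: for $\omega_t$ with $t < 1-\delta$ one has $|u| \le t \Rightarrow |u-1| \ge 1-t > \delta$; for $\omega^t$ with $t > 1+\delta$ one has $|u| \ge t \Rightarrow |u-1| \ge t-1 > \delta$; and $\omega_s^t$ is the union of the two.

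The strategy is to exhibit a single admissible competitor $K$ in the supremum defining $r_{\Omega_{\varepsilon}}(\omega_t)$ --- that is, a compact $K \subset \Omega_{\varepsilon}$ with $\partial K \cap \omega_t = \varnothing$ --- which also satisfies $K \supset \omega_t \cap \tilde{\Omega}$. Together with the monotonicity remark $A \subset B \Rightarrow r(A) \le r(B)$ this yields $r(K \cap \omega_t) \ge r(\omega_t \cap \tilde{\Omega})$, and the supremum in the definition of $r_{\Omega_{\varepsilon}}$ then delivers the inequality.

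The candidate I take is
\[
K \;:=\; \bigcup_{B \in \mathcal{B}_i} (1+\alpha) B,
\]
for a small $\alpha > 0$. Since $\mathcal{B}$ is a finite collection of pairwise disjoint closed balls, and since each $B \in \mathcal{B}_i$ is a compact subset of the open set $\Omega_{\varepsilon}$ (hence at positive distance from $\partial \Omega_{\varepsilon}$), I may choose $\alpha$ small enough that simultaneously (i) $(1+\alpha) B \subset \Omega_{\varepsilon}$ for all $B \in \mathcal{B}_i$ and (ii) $(1+\alpha) B \cap B' = \varnothing$ for all $B \in \mathcal{B}_i$ and $B' \in \mathcal{B} \setminus \{B\}$. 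Then $K$ is a finite union of closed balls and is therefore compact, and (i) places it inside $\Omega_{\varepsilon}$. The inclusion $\omega_t \cap \tilde{\Omega} \subset K$ is immediate: any $x \in \omega_t$ lies in some ball of $\mathcal{B}$ by the first paragraph, and intersecting with $\tilde{\Omega}$ discards precisely the balls of $\mathcal{B}_b$, so $x \in \bigcup_{B \in \mathcal{B}_i} B \subset K$.

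The main --- and in fact the only --- obstacle is the boundary condition $\partial K \cap \omega_t = \varnothing$, and this is exactly what the enlargement $B \mapsto (1+\alpha) B$ is designed to purchase. Any $x \in \partial K$ must lie on some circle $\partial((1+\alpha) B_j)$ with $B_j \in \mathcal{B}_i$; since $\alpha > 0$ this circle is disjoint from $B_j$, and by (ii) it is disjoint from every other $B' \in \mathcal{B}$, so $x \notin \bigcup_{B \in \mathcal{B}} B$ and in particular $x \notin \omega_t$. The cases of $\omega^t$ and $\omega_s^t$ then proceed verbatim, with the containment noted in the first paragraph playing the role of $\omega_t \subset \bigcup_{B \in \mathcal{B}} B$.
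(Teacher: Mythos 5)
Your proof is correct and follows the same basic strategy as the paper's: exhibit a compact competitor $K\subset\Omega_\varepsilon$ with $\partial K\cap\omega_t=\varnothing$ and $K\cap\omega_t\supset\omega_t\cap\tilde\Omega$, then invoke the supremum defining $r_{\Omega_\varepsilon}$. The paper takes $K=V_i:=\cup_{B\in\mathcal{B}_i}B$ directly, deducing $\partial V_i\cap\omega_t=\varnothing$ from the strict inclusion $\omega_t\subset\mathrm{Int}\bigl(\cup_{B\in\mathcal{B}}B\bigr)$ (which uses $t<1-\delta$); your slight inflation $K=\cup(1+\alpha)B$ achieves the same end while sidestepping the interior argument, a minor but perfectly valid variation.
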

\begin{proof}
Suppose that $t\in(0,1-\delta)$ and let Int$(\cdot)$ denote the interior of a set.  Write $V =\cup_{B\in \mathcal{B}}B$ and $V_i =\cup_{B\in \mathcal{B}_i}B$.  Since the inclusions
\begin{equation}\label{r_b_2}
\text{Int}(V) \supseteq \text{Int}(\{x\in \Omega_{\varepsilon} \; | \; \abs{u(x)-1} \ge \delta \}) \supset \omega_t
\end{equation}
hold, we have that $\omega_t \cap \tilde{\Omega} = \omega_t \cap V_i$, and hence $r(\omega_t \cap \tilde{\Omega})=r(\omega_t \cap V_i)$.  When combined with the fact that $V_i$ is a compact subset of $\Omega_{\varepsilon}$ and $\partial V_i \cap \omega_t = \varnothing$, this yields the first estimate in \eqref{r_b_1}.  Similar arguments prove the second and third assertions.
\end{proof}

We now construct the initial balls.  The following proposition is the analogue of Proposition 4.7 of \cite{ss_book}, but here we have an extra term of the form
\begin{equation*}
\int \abs{\nabla_{A+G} v}^2.
\end{equation*}
Note that items 1, 2, and 3 are the same as those found in \cite{ss_book}; item 4 is new.

\begin{prop}\label{init_balls}
Let $\alpha \in (0,1)$.  There exists $\varepsilon_0 >0$ (depending on $\alpha$) such that for $\varepsilon \le \varepsilon_0$ and $u \in C^1(\Omega,\mathbb{C})$ with $F_{\varepsilon}(\abs{u},\Omega) \le \varepsilon^{\alpha-1},$ the following hold.

There exists a finite, disjoint collection of closed balls, denoted by $\mathcal{B}_0$, with the following properties.

1. $r(\mathcal{B}_0) = C \varepsilon^{\alpha/2}$, where $C$ is a universal constant.

2. $\{x\in \Omega_{\varepsilon} \; | \; \abs{u(x)-1} \ge \delta \} \subset V_0 := \Omega_{\varepsilon} \cap \left(\cup_{B\in \mathcal{B}_0} B\right)$, where $\delta = \varepsilon^{\alpha/4}$.

3. Write $v= u/\abs{u}$.  For $t \in (0,1-\delta)$ we have the estimate
\begin{equation}\label{i_b_11}
\frac{1}{2}\int_{V_0\backslash \omega_t} \abs{\nabla_A v}^2 + \frac{r(\mathcal{B}_0)^2}{2}\int_{V_0} (\curl{A})^2 \ge \pi D_0 \left( \log{\frac{r(\mathcal{B}_0)}{r_{\Omega_\varepsilon}(\omega_t)}} - C \right),
\end{equation}
where
\begin{equation}
D_0 = \sum_{\substack{B\in \mathcal{B}_{0} \\ B \subset \Omega_\varepsilon}} \abs{d_B}.
\end{equation}

4. There exists a family of finite collections of closed, disjoint balls $\{\mathcal{C}(s)\}_{s\in[0,\sigma]}$,
all of which are contained in in $V_0$, and that are grown according to the ball growth lemma from an initial collection, $\mathcal{C}(0)$, that covers the set $\omega_{1/2}^{3/2}\cap V_0$.  The number $\sigma$ is such that $r(\mathcal{C}(\sigma)) = \frac{3}{8} r(\mathcal{B}_0)$. Let $G:V_0\rightarrow \Rn{2}$ be the function defined by using $\Omega_{\varepsilon}$ and $\{\mathcal{C}(s)\}_{s\in[0,\sigma]}$ in \eqref{G_def} and then extended by zero to the rest of $V_0$. For each $\lambda >0$ we have the estimate

\begin{multline}\label{i_b_12}
\frac{1}{2} \int_{V_0 \backslash \omega_{1/2}^{3/2}} \abs{\nabla_A v}^2 + \sum_{B \in \mathcal{B}_0} \frac{r(B)\lambda}{2} \int_{B \cap \Omega} (\curl{A})^2 \\
\\
\ge  \int_0^{\sigma} \sum_{\substack{\bar{B} \in \mathcal{C}(\sigma) \\ \bar{B} \subset \Omega_{\varepsilon}}}\sum_{B \in \bar{B} \cap \mathcal{C}(t)} \pi d_B^2\left(2\beta_B -\beta_B^2 - \frac{\beta_B^2 r(\mathcal{C}(t))}{2 \lambda} \right)dt
+\frac{1}{2} \int_{V_0 \backslash \omega_{1/2}^{3/2}}
\abs{\nabla_{A+G} v}^2.
\end{multline}
\end{prop}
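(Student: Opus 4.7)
Items 1--3 of the statement are quoted verbatim from Proposition 4.7 of \cite{ss_book}, so the only new content is item 4. My strategy would be to build a small initial collection $\mathcal{C}(0)$ covering $\omega_{1/2}^{3/2}\cap V_0$, grow it via the ball growth lemma to the prescribed size $\tfrac{3}{8}r(\mathcal{B}_0)$, use the annular structure produced to define $G$ via \eqref{G_def}, and then apply Proposition \ref{low_growth} ball-by-ball and assemble the pieces.

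\textbf{Construction of $\mathcal{C}(s)$ and $G$.} Applying Lemma \ref{e_rad_bound} to $\rho=\abs{u}$ on the open set $\Omega_\ep$ and using $F_\ep(\abs{u},\Omega)\le \ep^{\alpha-1}$ yields $r_{\Omega_\ep}(\omega_{1/2}^{3/2})\le C\ep F_\ep(\abs{u},\Omega_\ep)\le C\ep^{\alpha}$. For $\ep$ small enough that $\delta=\ep^{\alpha/4}\le 1/2$, one has $\omega_{1/2}^{3/2}\cap\Omega_\ep\subset\{\abs{u-1}\ge\delta\}\subset V_0$, so Lemma \ref{radius_bounds} together with the infimum definition of the radius produces a finite, disjoint collection $\mathcal{C}(0)$ of closed balls lying inside $V_0$, covering $\omega_{1/2}^{3/2}\cap V_0$, with $r(\mathcal{C}(0))\le C\ep^{\alpha}$. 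I then grow $\mathcal{C}(0)$ by Lemma \ref{ball_growth_lemma} and stop at the time $\sigma$ determined by $r(\mathcal{C}(\sigma))=\tfrac{3}{8}r(\mathcal{B}_0)$, and finally define $G$ by \eqref{G_def} with ambient set $\Omega_\ep$ and family $\{\mathcal{C}(s)\}$, extended by zero to the rest of $V_0$.

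\textbf{Summing the annular estimate.} For each $\bar B\in \mathcal{C}(\sigma)$ with $\bar B\subset\Omega_\ep$, Proposition \ref{low_growth} applied to $\bar B$ with initial collection $\mathcal{C}(0)$ and $\omega=\bigcup_{B\in\mathcal{C}(0)}B$ yields, after discarding the nonnegative curl term over $\mathcal{C}(0)$ from its left-hand side,
$$\frac{1}{2}\int_{\bar B\setminus\omega}\abs{\nab_A v}^2+\frac{r(\bar B)\lambda}{2}\int_{\bar B}(\curl A)^2\ \geq\ \frac{1}{2}\int_{\bar B\setminus\omega}\abs{\nab_{A+G}v}^2+\int_0^\sigma\sum_{B\in\bar B\cap\mathcal{C}(t)}\pi d_B^2\left(2\beta_B-\beta_B^2-\frac{\beta_B^2 r(\mathcal{C}(t))}{2\lambda}\right)dt.$$
I sum over these $\bar B$ and use that $G\equiv 0$ on $\omega$, on balls of $\mathcal{C}(\sigma)$ that leave $\Omega_\ep$, and on $V_0\setminus\bigcup\mathcal{C}(\sigma)$ (so that $\abs{\nab_{A+G}v}^2=\abs{\nab_A v}^2$ there); adding these equal contributions to both sides promotes both gradient integrals to run over the full $V_0\setminus\omega_{1/2}^{3/2}$. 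Since each such $\bar B$ is connected and $V_0\subset\bigcup\mathcal{B}_0$ is a disjoint union, $\bar B$ sits in a unique $B'\in\mathcal{B}_0$ with $r(\bar B)\le r(B')$; therefore $\sum_{\bar B}r(\bar B)\int_{\bar B}(\curl A)^2\le\sum_{B\in\mathcal{B}_0}r(B)\int_{B\cap\Omega}(\curl A)^2$, and this delivers \eqref{i_b_12}.

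\textbf{Main obstacle.} The delicate point is justifying that the whole family $\{\mathcal{C}(s)\}_{s\in[0,\sigma]}$ remains inside $V_0$ despite the exponential growth $r(\mathcal{C}(s))=e^s r(\mathcal{C}(0))$. This relies on the large scale separation $r(\mathcal{C}(0))/r(\mathcal{B}_0)=O(\ep^{\alpha/2})$ and the buffer factor $\tfrac{3}{8}<1$: each ball of $\mathcal{C}(0)$ sits deep inside some $\mathcal{B}_0$-ball, and the target radius $\tfrac{3}{8}r(\mathcal{B}_0)$ is small enough relative to any containing $\mathcal{B}_0$-ball (whose radius is at least that of the very small initial balls used in the construction of $\mathcal{B}_0$) that no $\mathcal{C}(s)$-ball escapes $V_0$; individual balls can be stopped early if necessary, without disturbing the summation above.
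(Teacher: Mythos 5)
Your outline of how to derive \eqref{i_b_12} once the family $\{\mathcal{C}(s)\}_{s\in[0,\sigma]}$ is known to live inside $V_0$ is essentially the paper's Step 6: apply Proposition \ref{low_growth} to each $\bar B\in\mathcal{C}(\sigma)$ contained in $\Omega_\ep$, add back the regions where $G\equiv0$ (as in \eqref{i_b_9}), and replace the curl sum over $\mathcal{C}(\sigma)$ by a curl sum over $\mathcal{B}_0$ (as in \eqref{i_b_10}). That part is fine.

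The gap is the ``main obstacle'' you flag at the end, and the argument you sketch to resolve it does not work. You take $\mathcal{B}_0$ as pre-given (quoting Proposition 4.7 of \cite{ss_book}) and then try to manufacture $\{\mathcal{C}(s)\}$ afterward inside $V_0$. But the ball growth lemma grows the seeds concentrically and exponentially; a seed lying near the boundary of its containing $\mathcal{B}_0$-ball will exit that ball almost immediately, long before the total radius reaches $\tfrac{3}{8}r(\mathcal{B}_0)$, and a single $\mathcal{C}(\sigma)$-ball can have radius as large as $\tfrac{3}{8}r(\mathcal{B}_0)$ --- a fixed fraction, not a small one --- which can exceed the radius of the $\mathcal{B}_0$-ball over which it sits. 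Nothing in Proposition 4.7 of \cite{ss_book} gives a lower bound on the individual radii in $\mathcal{B}_0$, nor any margin keeping $\omega_{1/2}^{3/2}$ away from $\partial(\cup\mathcal{B}_0)$, so ``each seed sits deep inside some $\mathcal{B}_0$-ball'' and ``no $\mathcal{C}(s)$-ball escapes'' are unsubstantiated. Your suggestion to ``stop individual balls early if necessary'' would destroy the structure of Lemma \ref{ball_growth_lemma} on which both the definition \eqref{G_def} of $G$ and Proposition \ref{low_growth} rest.

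The paper avoids this by reversing the order of construction. It does \emph{not} take $\mathcal{B}_0$ from Proposition 4.7 of \cite{ss_book} as a black box; it builds $\mathcal{B}_0$ from scratch in six steps, and crucially Step~4 grows $\{\mathcal{C}(s)\}$ \emph{first} (from a cover of $\omega_{1/2}^{3/2}\cap\tilde\Omega$, up to total radius $3R$), and only afterward, in Step~5, defines $\mathcal{B}_0$ as a covering of the union of $\mathcal{B}_{\bar t}$, $\mathcal{K}$, $\mathcal{E}$, \emph{and} $\mathcal{C}(\sigma)$, with $r(\mathcal{B}_0)=8R$. Containment of every $\mathcal{C}(s)$-ball in $V_0$ is then built in by definition, and the ratio $r(\mathcal{C}(\sigma))/r(\mathcal{B}_0)=3/8$ is automatic rather than something to be engineered. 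The rest of items 1--3 then come out of this same construction (Steps 1--3 and 5), not by citation of \cite{ss_book}. In short: the proposal's assembly of \eqref{i_b_12} is sound, but the construction of the underlying objects is where the real work lies, and the order of construction you propose cannot guarantee the containment you need.
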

\begin{proof}
We break the proof into six steps.  The first four consist of finding four collections of balls that are used to create the initial collection $\mathcal{B}_0$.  The last two steps prove the estimates of items 3 and 4.\\\\
Step 1.

Using $M = \varepsilon^{\alpha -1}$ and $\delta = \varepsilon^{\alpha/4}$ in Lemma \ref{init_set} produces a collection of disjoint, closed balls $\mathcal{E}$ that cover the set $\{x\in \Omega_{\varepsilon} \; | \; \abs{u(x)-1} \ge \delta \}$ such that $R:=r(\mathcal{E}) \le C\varepsilon^{\alpha/2}$.  We will eventually need to use Lemma \ref{radius_bounds}, so we employ its notation by breaking the collection $\mathcal{E}$ into subcollections $\mathcal{E}_i$ and $\mathcal{E}_b$ and defining the set $\tilde{\Omega}=\Omega_{\varepsilon}\backslash (\cup_{B\in \mathcal{E}_b}B$).\\\\
Step 2.

  By the definition of the radius of a set, for any $t\in (0,1-\delta)$ we can cover $\omega_t \cap \tilde{\Omega}$ by a collection of disjoint balls, denoted by $\mathcal{B}_t^0$, with total radius less than $2r(\omega_t \cap \tilde{\Omega})$.  Since $r(\omega_t \cap \tilde{\Omega}) \le R$, we can use Lemma \ref{ball_growth_lemma} to grow the collection $\mathcal{B}_t^0$ into a collection $\mathcal{B}_t$ such that $r(\mathcal{B}_t)=2R$.  We then utilize Corollary \ref{low_growth_old}
on each of the balls in $\mathcal{B}_t$ that is contained in $\tilde{\Omega}$ and sum to get the estimate
\begin{equation}\label{i_b_4}
\frac{1}{2} \int_{V_t \backslash \omega_t} \abs{\nabla_A v}^2 + \frac{4R^2}{2} \int_{V_t} (\curl{A})^2 \ge \pi D_t \left(\log{\frac{2R}{2r(\omega_t \cap \tilde{\Omega})}} -\log{2}  \right),
\end{equation}
where
\begin{equation*}
\begin{split}
& V_t = \tilde{\Omega} \cap \left(\cup_{B\in \mathcal{B}_t} B\right), \text{ and} \\
& D_t = \sum_{\substack{B\in \mathcal{B}_t \\ B \subset \tilde{\Omega}}} \abs{d_B}.
\end{split}
\end{equation*}
Choose $\bar{t} \in (0,1-\delta)$ such that $D_{\bar{t}}$ is minimal.\\\\
Step 3.

Let $m$ denote the supremum of
\begin{equation*}
\mathcal{F}(K) := \frac{1}{2} \int_{(K \cap \tilde{\Omega})\backslash \omega} \abs{\nabla_A v}^2 + \frac{4R^2}{2} \int_{K \cap \tilde{\Omega}} (\curl{A})^2
\end{equation*}
over compact $K \subset \Omega$ such that $r(K) < 2R$.  Choose $K$ so that $r(K) < 2R$ and $\mathcal{F}(K) \ge m-1$.  Cover $K$ by a collection of disjoint, closed balls $\mathcal{K}$ such that $r(\mathcal{K})=2R$ (the existence of such a collection is guaranteed by the ball growth lemma).\\\\
Step 4.

We can cover $\omega_{1/2}^{3/2} \cap \tilde{\Omega}$ by a collection of disjoint balls, denoted by $\mathcal{C}_0$, with radius less than $\frac{3}{2} r(\omega_{1/2}^{3/2} \cap \tilde{\Omega})$.  We use the ball growth lemma, applied to $\mathcal{C}_0$, to produce a family of collections $\{\mathcal{C}(s) \}$ with $s\in (0,\sigma)$,
\begin{equation*}
\sigma = \log{\left( \frac{3R}{r(\mathcal{C}_0)} \right)}.
\end{equation*}
Let $\mathcal{C} = \mathcal{C}(\sigma)$ and note that by construction $r(\mathcal{C})=3R$. \\\\
Step 5.

Define $\mathcal{B}_0$ to be a collection of disjoint balls that cover the balls in $\mathcal{B}_{\bar{t}}$, $\mathcal{K}$, $\mathcal{C}$, and $\mathcal{E}$.  We may choose such a collection so that $r(\mathcal{B}_0)=8R$.  Let $V_0 = \Omega_{\varepsilon} \cap \left(\cup_{B\in \mathcal{B}_0} B\right)$.  Then
\begin{equation}\label{i_b_5}
I:= \frac{1}{2}\int_{V_0\backslash \omega_t} \abs{\nabla_A v}^2 + \frac{r(\mathcal{B}_0)^2}{2}\int_{V_0} (\curl{A})^2 \ge \mathcal{F}(K) + \frac{1}{2}\int_{\omega \backslash \omega_t} \abs{\nabla_A v}^2,
\end{equation}
and by the construction of $K$ and $V_t$ for any $t\in(0,1-\delta)$, this implies
\begin{equation}\label{i_b_6}
\begin{split}
I+1 &\ge \mathcal{F}(V_t) + \frac{1}{2}\int_{\omega \backslash \omega_t} \abs{\nabla_A v}^2 \\
& \ge \frac{1}{2}\int_{V_t\backslash \omega_t} \abs{\nabla_A v}^2 + \frac{4R^2}{2}\int_{V_t} (\curl{A})^2 \\\\
& \ge \pi D_t \left( \log{\frac{2R}{2r(\omega_t \cap \tilde{\Omega})}} -\log{2}\right) \\
& \ge \pi D_t \left( \log{\frac{r(\mathcal{B}_0)}{r_{\Omega_\varepsilon}(\omega_t)}} - C \right),
\end{split}
\end{equation}
where the last line follows from \eqref{r_b_1} and the fact that $r(\mathcal{B}_0)=8R$.  By the choice of $\bar{t}$,
\begin{equation}\label{i_b_7}
D_t \ge D_{\bar{t}} = \sum_{\substack{B\in \mathcal{B}_{\bar{t}} \\ B \subset \tilde{\Omega}}} \abs{d_B}.
\end{equation}
We break the collection of balls in the last sum in \eqref{i_b_7} into two subcollections:
\begin{equation*}
\begin{split}
& I_1:=\{ B \in \mathcal{B}_{\bar{t}} \;|\; B \subseteq \tilde{\Omega}, B\subseteq B'\in \mathcal{B}_0 \text{ so that } B' \cap \partial \Omega_{\varepsilon} \neq \varnothing \}\\
& I_2:=\{ B \in \mathcal{B}_{\bar{t}} \;|\; B \subseteq \tilde{\Omega}, B\subseteq B'\in \mathcal{B}_0 \text{ so that } B' \subseteq \Omega_{\varepsilon}  \}.
\end{split}
\end{equation*}
Then
\begin{equation}\label{i_b_13}
\sum_{\substack{B\in \mathcal{B}_{\bar{t}} \\ B \subset \tilde{\Omega}}} \abs{d_B}
= \sum_{B \in I_1}\abs{d_B} + \sum_{B \in I_2}\abs{d_B}
\ge 0 + \sum_{\substack{B\in \mathcal{B}_{0} \\ B \subset \Omega_\varepsilon}}\abs{d_B} = D_0,
\end{equation}
where the inequality follows from Lemma 4.2 in \cite{ss_book}.
Combining \eqref{i_b_6}, \eqref{i_b_7}, and \eqref{i_b_13} yields \eqref{i_b_11}.\\\\
Step 6.

Let $U$ be the union of the balls in $\mathcal{C}_0$ that are contained in $\Omega_{\varepsilon}$ and $W$ be the union of the balls in $\mathcal{C}$ that are contained in $\Omega_{\varepsilon}$.  Then applying Proposition \ref{low_growth} to each $\bar{B} \in \mathcal{C}$ such that $\bar{B} \subset \Omega_{\varepsilon}$ and summing, we get the estimate
\begin{multline}\label{i_b_8}
\frac{1}{2} \int_{W \backslash U} \abs{\nabla_A v}^2 + \sum_{\substack{\bar{B} \in \mathcal{C} \\ \bar{B} \subset \Omega_{\varepsilon}}}\frac{r(\bar{B})\lambda}{2} \int_{\bar{B}} (\curl{A})^2  \\ \ge \frac{1}{2} \int_{W \backslash U} \abs{\nabla_{A+G} v}^2 + \int_0^{\sigma} \sum_{\substack{\bar{B} \in \mathcal{C} \\ \bar{B} \subset \Omega_{\varepsilon}}}\sum_{B \in \bar{B} \cap \mathcal{C}(t)} \pi d_B^2\left(2\beta_B -\beta_B^2 - \frac{\beta_B^2 r(\mathcal{C}(t))}{2 \lambda}  \right)dt.
\end{multline}
$G$ vanishes in the regions $V_0 \backslash W$ and $U \backslash \omega_{1/2}^{3/2}$, so
\begin{equation}\label{i_b_9}
\frac{1}{2} \int_{(V_0 \backslash W)\cup (U \backslash \omega_{1/2}^{3/2})} \abs{\nabla_A v}^2 = \frac{1}{2} \int_{(V_0 \backslash W)\cup (U \backslash \omega_{1/2}^{3/2})} \abs{\nabla_{A+G} v}^2.
\end{equation}
Adding \eqref{i_b_9} to both sides of \eqref{i_b_8} and noting that
\begin{equation}\label{i_b_10}
\sum_{\substack{\bar{B} \in \mathcal{C} \\ \bar{B} \subset \Omega_{\varepsilon}}}\frac{r(\bar{B})\lambda}{2} \int_{\bar{B}} (\curl{A})^2 \le \sum_{B \in \mathcal{B}_0} \frac{r(B)\lambda}{2} \int_{B \cap \Omega} (\curl{A})^2
\end{equation}
yields \eqref{i_b_12}.
\end{proof}

\subsection{The final balls}

The next proposition constructs the final balls from the initial ones constructed in Proposition \ref{init_balls}.  Items 1, 2, and 3 are the same as those of Theorem 4.1 of \cite{ss_book}; item 4 contains the novel estimate with the $G$-term.

\begin{prop}\label{final_balls}
Let $\alpha \in (0,1)$.  There exists $\varepsilon_0 >0$ (depending on $\alpha$) such that for $\varepsilon \le \varepsilon_0$ and $u \in C^1(\Omega,\mathbb{C})$ with $F_{\varepsilon}(\abs{u},\Omega) \le \varepsilon^{\alpha - 1}$, the following hold.

For any $1 > r > C \varepsilon^{\alpha/2}$, where $C$ is a universal constant, there exists a finite, disjoint collection of closed balls, denoted by $\mathcal{B}$, with the following properties.

1. $r(\mathcal{B})=r$.

2. $\{x\in \Omega_{\varepsilon} \; | \; \abs{u(x)-1} \ge \delta \} \subset V := \Omega_{\varepsilon} \cap \left(\cup_{B\in \mathcal{B}} B\right)$, where $\delta = \varepsilon^{\alpha/4}$.

3. Write $v= u/\abs{u}$.  For $t \in (0,1-\delta)$ we have the estimate
\begin{equation}\label{f_b_1}
\frac{1}{2}\int_{V\backslash \omega_t} \abs{\nabla_A v}^2 + \frac{r^2}{2}\int_{V} (\curl{A})^2 \ge \pi D \left( \log{\frac{r}{r_{\Omega_\varepsilon}(\omega_t)}} - C \right),
\end{equation}
where
\begin{equation}
D = \sum_{\substack{B\in \mathcal{B} \\ B \subset \Omega_\varepsilon}} \abs{d_B}.
\end{equation}

4. Let $G:\Omega \rightarrow \Rn{2}$ be the extension, according to \eqref{G_def}, of the $G$ from item 4 in Proposition \ref{init_balls}.  Write $s = \log{\frac{r}{r(\mathcal{B}_0)}}$.
Then
\begin{multline}\label{f_b_2}
\frac{1}{2} \int_{V \backslash \omega_{1/2}^{3/2}} \abs{\nabla_A v}^2 + \sum_{\bar{B} \in \mathcal{B}}\frac{r(\bar{B}) (r-r(\mathcal{B}_0))}{2}\int_{\bar{B} \cap \Omega} (\curl{A})^2 \ge \frac{1}{2} \int_{V \backslash \omega_{1/2}^{3/2}} \abs{\nabla_{A+G} v}^2 \\ + \int_0^s \sum_{\substack{\bar{B} \in \mathcal{B} \\ \bar{B} \subset \Omega_{\varepsilon}}} \sum_{B \in \bar{B} \cap \mathcal{B}(t)} \pi d_B^2 \left(2\beta_B -\beta_B^2 - \frac{\beta_B^2 r(\mathcal{B}(t))}{2(r-r(\mathcal{B}_0))} \right)dt \\ + \int_0^{\sigma} \sum_{\substack{\bar{B} \in \mathcal{C}(\sigma) \\ \bar{B} \subset \Omega_{\varepsilon}}}\sum_{B \in \bar{B} \cap \mathcal{C}(t)} \pi d_B^2\left(2\beta_B -\beta_B^2 - \frac{\beta_B^2 r(\mathcal{C}(t))}{2 (r(\mathcal{C}(\sigma)) - r(\mathcal{C}_0))} \right)dt.
\end{multline}

\end{prop}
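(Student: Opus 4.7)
The plan is to grow the initial collection $\mathcal{B}_0$ supplied by Proposition \ref{init_balls} via the ball growth lemma until its total radius reaches $r$, and to extend $G$ through the new annular regions by the same formula \eqref{G_def}. Items 1 and 2 will then be immediate; item 3 is the standard telescoping combination used in Chapter 4 of \cite{ss_book}; item 4 is obtained by invoking Proposition \ref{low_growth} (which retains the $\nabla_{A+G}v$ term) in place of the coarser Corollary \ref{low_growth_old}.

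First I would set $s = \log(r/r(\mathcal{B}_0))$ and apply Lemma \ref{ball_growth_lemma} with input $\mathcal{B}_0$ to produce the family $\{\mathcal{B}(t)\}_{t\in[0,s]}$, defining $\mathcal{B} := \mathcal{B}(s)$ so that $r(\mathcal{B}) = r$ by property 4 of the ball growth lemma. Since $\mathcal{B}_0$ already covered $\{x \in \Omega_\varepsilon : \abs{u-1} \ge \delta\}$ and $V_0 \subset V$, the covering property 2 is inherited. I would then extend $G$ from $V_0$ to $V$ by feeding this new family, together with the ambient set $\Omega_\varepsilon$, into \eqref{G_def}: on every circle $\partial B$ with $B$ an annular-stage ball of the growth from $\mathcal{B}_0$ to $\mathcal{B}$ that stays inside $\Omega_\varepsilon$ it equals $\tau_{\partial B} d_B \beta_B/\abs{x - a_B}$, and it vanishes elsewhere, so that in particular the original definition of $G$ on $V_0$ is preserved. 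For item 3 I would apply \eqref{l_g_o_2} to each $\bar{B} \in \mathcal{B}$ with $\bar{B} \subset \Omega_\varepsilon$, with $r_0 = r(\mathcal{B}_0)$ and $r_1 = r$, and sum over such balls. Combining with \eqref{i_b_11} and using $D \le D_0$ (subadditivity of absolute degrees under merging, Lemma 4.2 of \cite{ss_book}), the two logarithms telescope into $\log(r/r_{\Omega_\varepsilon}(\omega_t))$ modulo a universal additive constant, which is \eqref{f_b_1}.

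For item 4 I would apply Proposition \ref{low_growth} with $\lambda = r - r(\mathcal{B}_0)$ and $\omega = V_0$ to each $\bar{B} \in \mathcal{B}$ with $\bar{B} \subset \Omega_\varepsilon$, and sum. The output consists of the first annular integral in \eqref{f_b_2}, the contribution $\frac{1}{2}\int_{V\setminus V_0}\abs{\nabla_{A+G}v}^2$, and a subtraction of the curl terms on those initial balls sitting inside a final ball in $\Omega_\varepsilon$. Adding \eqref{i_b_12} from Proposition \ref{init_balls} with the same $\lambda$ supplies the second annular integral (the one involving $\mathcal{C}(t)$), the gauged-gradient piece $\frac{1}{2}\int_{V_0 \setminus \omega_{1/2}^{3/2}}\abs{\nabla_{A+G}v}^2$, and reinstates the previously-subtracted inner curl terms. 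Since $G$ vanishes off the annular parts, the two gauged-gradient pieces join into $\frac{1}{2}\int_{V\setminus \omega_{1/2}^{3/2}}\abs{\nabla_{A+G}v}^2$, while the $\abs{\nabla_A v}^2$ pieces combine on the disjoint union $V \setminus \omega_{1/2}^{3/2} = (V \setminus V_0) \cup (V_0 \setminus \omega_{1/2}^{3/2})$. Finally, enlarging the sum of curl terms from $\{\bar{B}\in\mathcal{B} : \bar{B}\subset\Omega_\varepsilon\}$ to all of $\mathcal{B}$ adds only nonnegative quantities to the left-hand side, yielding \eqref{f_b_2}.

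The main obstacle I expect is the careful bookkeeping of the $\curl A$ terms across the interface between $V_0$ and $V \setminus V_0$: one has to verify that the subtraction of curl integrals over $\mathcal{B}_0$-balls produced by Proposition \ref{low_growth} precisely cancels against the curl sum reinserted by \eqref{i_b_12}, so that only $\sum_{\bar{B}\in\mathcal{B}} r(\bar{B})(r-r(\mathcal{B}_0))/2 \int_{\bar{B}\cap\Omega}(\curl A)^2$ survives on the left. A secondary point, which is essentially cosmetic but must be checked, is that the regions on which the two $\abs{\nabla_A v}^2$-estimates are produced are disjoint and union to $V\setminus\omega_{1/2}^{3/2}$, so no gradient terms are double-counted and none are missed.
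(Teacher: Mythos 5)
Your overall plan — grow $\mathcal{B}_0$ to a collection of radius $r$, extend $G$, use \eqref{l_g_o_2} plus \eqref{i_b_11} for item 3, and use Proposition \ref{low_growth} plus \eqref{i_b_12} for item 4 — is exactly the route the paper takes.  There is, however, one genuine slip in your treatment of item 4, centred on the choice of $\lambda$ in \eqref{i_b_12}.

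You invoke \eqref{i_b_12} ``with the same $\lambda$'', i.e.\ $\lambda = r - r(\mathcal{B}_0)$.  This does make the $\curl A$ terms cancel exactly, as you say, but then the third annular integral you produce carries the factor $\frac{\beta_B^2\, r(\mathcal{C}(t))}{2(r - r(\mathcal{B}_0))}$, whereas \eqref{f_b_2} as stated has $\frac{\beta_B^2\, r(\mathcal{C}(t))}{2\bigl(r(\mathcal{C}(\sigma)) - r(\mathcal{C}_0)\bigr)}$.  These are not the same quantity, so what you obtain is not literally \eqref{f_b_2} but a variant of it.  The paper instead plugs $\lambda = r(\mathcal{C}(\sigma)) - r(\mathcal{C}_0)$ into \eqref{i_b_12}, which reproduces the third integral of \eqref{f_b_2} exactly; the price is that the curl terms no longer cancel but leave a residue
\[
\sum_{B\in\mathcal{B}_0}\tfrac{r(B)}{2}\bigl[(r(\mathcal{C}(\sigma))-r(\mathcal{C}_0)) - (r-r(\mathcal{B}_0))\bigr]\int_{B\cap\Omega}(\curl A)^2,
\]
which is nonpositive because $\tfrac{3}{8}r(\mathcal{B}_0) - r(\mathcal{C}_0) - r + r(\mathcal{B}_0) \le C\varepsilon^{\alpha/2} - r \le 0$ (this is \eqref{f_b_6}), and can therefore be discarded.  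Your version can be salvaged: the map $\lambda \mapsto 2\beta_B - \beta_B^2 - \frac{\beta_B^2\, r(\mathcal{C}(t))}{2\lambda}$ is increasing, and the same inequality $r - r(\mathcal{B}_0) \ge r(\mathcal{C}(\sigma)) - r(\mathcal{C}_0)$ shows your third integral dominates the one in \eqref{f_b_2}, so your (stronger) estimate implies \eqref{f_b_2}.  But this monotonicity step is a necessary part of the argument, and as written you present your output as if it were \eqref{f_b_2} itself.  Either add the comparison, or switch to the paper's two-$\lambda$ choice and the sign observation \eqref{f_b_6}.  The rest of your bookkeeping (disjointness of $V\setminus V_0$ and $V_0\setminus\omega_{1/2}^{3/2}$, vanishing of $G$ off the annular regions, the trivial bound on boundary balls, and item 3) is correct and matches the paper.
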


\begin{proof}
Lemma \ref{init_balls} provides an initial set of disjoint, closed balls $\mathcal{B}_0$.  We grow these according to the ball growth lemma to produce $\{\mathcal{B}(t)\}_{t\in[0,s]}$ with $s$ chosen so that $r(\mathcal{B}(s))=r$, i.e. $s = \log{\frac{r}{r(\mathcal{B}_0)}}$.  By construction, items 1 and 2 are proved.  Let $\mathcal{B} = \mathcal{B}(s)$, and write $V = \Omega_{\varepsilon} \cap \cup_{B \in \mathcal{B}}B$, $V_0 = \Omega_{\varepsilon} \cap \cup_{B \in \mathcal{B}_0}B$.  Let $G:V_0 \rightarrow \Rn{2}$ be the function defined in item 4 of Proposition \ref{init_balls}.  We then use $\mathcal{B}_0$ and $\mathcal{B}$ to extend $G:\Omega \rightarrow \Rn{2}$ according to \eqref{G_def}.

We analyze the balls in $\mathcal{B}$ according to whether or not they are contained entirely in $\Omega_{\varepsilon}$.  For balls $\bar{B} \in \mathcal{B}$ such that $\bar{B} \subset \Omega_{\varepsilon}$, we use \eqref{l_g_o_2}, and for the other balls we use the trivial non-negative bound.  Summing over all balls in $\mathcal{B}$, we get
\begin{equation}\label{f_b_13}
\frac{1}{2} \int_{V\backslash V_0} \abs{\nabla_A v}^2 +
\sum_{\bar{B}\in
\mathcal{B}}\frac{r(\bar{B})(r-r(\mathcal{B}_0))}{2}
\int_{\bar{B}\cap \Omega} (\curl{A})^2  \ge \pi D \left(
\log{\frac{r}{r(\mathcal{B}_0)}}-\log{2}   \right).
\end{equation}
Adding \eqref{i_b_11} to \eqref{f_b_13} and noting that $D_0 \ge D$ then yields \eqref{f_b_1}.

To prove \eqref{f_b_2} we proceed similarly, using different estimates for the balls in $\mathcal{B}$ according to whether or not they are contained in $\Omega_{\varepsilon}$.  For balls $\bar{B} \in \mathcal{B}$ such that $\bar{B} \subset \Omega_{\varepsilon}$ we use Proposition \ref{low_growth} to get the estimate
\begin{multline}\label{f_b_3}
\frac{1}{2} \int_{\bar{B} \backslash V_0} \abs{\nabla_A v}^2 + \frac{r(\bar{B}) \lambda}{2}\int_{\bar{B}} (\curl{A})^2 - \sum_{B \in \bar{B} \cap \mathcal{B}_0}\frac{r(B) \lambda}{2}\int_{B} (\curl{A})^2 \\
\ge \frac{1}{2} \int_{\bar{B} \backslash V_0} \abs{\nabla_{A+G} v}^2 + \int_0^s \sum_{B\in \bar{B} \cap \mathcal{B}(t)} \pi d_B^2 \left(  2\beta_B -\beta_B^2 - \frac{\beta_B^2 r(\mathcal{B}(t))}{2\lambda}  \right).
\end{multline}
On the other hand, the construction of $G$ guarantees that it vanishes on all balls $\bar{B} \in \mathcal{B}$ such that $\bar{B} \cap \partial \Omega_{\varepsilon}\neq \varnothing$, and so for such $\bar{B}$ we trivially have the estimate
\begin{multline}\label{f_b_4}
\frac{1}{2} \int_{(\bar{B}\cap \Omega) \backslash V_0} \abs{\nabla_A v}^2 + \frac{r(\bar{B}) \lambda}{2}\int_{\bar{B}\cap \Omega} (\curl{A})^2 - \sum_{B \in \bar{B} \cap \mathcal{B}_0}\frac{r(B) \lambda}{2}\int_{B \cap \Omega} (\curl{A})^2   \\
\ge \frac{1}{2} \int_{(\bar{B} \cap \Omega) \backslash V_0} \abs{\nabla_{A+G} v}^2.
\end{multline}
Summing \eqref{f_b_3} and \eqref{f_b_4} over all balls in $\mathcal{B}$ then yields the estimate
\begin{multline}\label{f_b_5}
\frac{1}{2} \int_{V \backslash V_0} \abs{\nabla_A v}^2 + \sum_{\bar{B} \in \mathcal{B}}\frac{r(\bar{B}) \lambda}{2}\int_{\bar{B} \cap \Omega} (\curl{A})^2 - \sum_{B \in \mathcal{B}_0}\frac{r(B) \lambda}{2}\int_{B\cap \Omega} (\curl{A})^2 \\
\ge \frac{1}{2} \int_{V \backslash V_0} \abs{\nabla_{A+G} v}^2 + \int_0^s \sum_{\substack{\bar{B} \in \mathcal{B} \\ \bar{B} \subset \Omega_{\varepsilon}}} \sum_{B \in \bar{B} \cap \mathcal{B}(t)} \pi d_B^2 \left(2\beta_B -\beta_B^2 - \frac{\beta_B^2 r(\mathcal{B}(t))}{2\lambda} \right)dt.
\end{multline}
We insert $\lambda = r-r(\mathcal{B}_0)$ into \eqref{f_b_5} and $\lambda = r(\mathcal{C}(\sigma)) - r(\mathcal{C}_0) =   \frac{3 r(\mathcal{B}_0)}{8} - r(\mathcal{C}_0)$ into \eqref{i_b_12} and add the estimates together.  Noting that
\begin{equation}\label{f_b_6}
\frac{3r(\mathcal{B}_0)}{8} - r(\mathcal{C}_0) -r + r(\mathcal{B}_0) \le C \varepsilon^{\alpha/2}-r \le 0,
\end{equation}
we arrive at the estimate \eqref{f_b_2}.

\end{proof}

\subsection{Degree analysis and selection of the $\beta_B$ values}

We will now select the values of the $\beta_B$ used to define $G$.  Ultimately, later in Theorem \ref{norm_switch}, we will get rid of $G$ altogether by bounding its $L^{2,\infty}$ norm by a term of the order $D^2$.  This bound, the proof of which is Proposition \ref{g_norm}, requires the values of the $\beta_B$ to be small.  However, since they play a role in the lower bounds of Proposition \ref{final_balls}, we can not choose the $\beta_B$ to be too small.  We balance these two demands by introducing a parameter $\eta$ to measure when $\beta_B$ must be small and when it can assume the natural choice for its value, $1$.

The next two results establish that for a ball $\bar{B} \in \mathcal{B}(s)$ there is a transition time (depending on $\eta$) in the family $\bar{B} \cap \mathcal{B}(t)$ before which we can take $\beta_B =1$, and after which we must use something more complicated.

\begin{lem}\label{degree_analysis}
Let $\mathcal{B}_0$ be a finite collection of disjoint, closed balls.  Suppose further that the collection $\mathcal{B}_0$ has the degree covering property that for all balls $B \subset \Omega \backslash (\cup_{S\in \mathcal{B}_0}S)$, it is the case that $d_B =0$.  In other words, the collection $\mathcal{B}_0$ covers all of the vortices.  Let $\mathcal{B}(t)$, $t\in[0,s]$, be a t-parameterized family of finite collections of disjoint, closed balls.  Suppose that $\mathcal{B}_0 =\mathcal{B}(0)$ and that
\begin{equation}\label{d_a_1}
\bigcup_{B \in \mathcal{B}(t_1)} B \subseteq \bigcup_{B \in \mathcal{B}(t_2)}B \text{ for } t_1\le t_2.
\end{equation}
Fix $\bar{B} \in \mathcal{B}(s)$. Define the negative and positive vorticity masses by
\begin{equation}\label{d_a_2}
\begin{split}
& N(t):=\sum_{\substack{B \in \bar{B} \cap \mathcal{B}(t) \\ d_B<0}} \abs{d_B} \\
& P(t):=\sum_{\substack{B \in \bar{B} \cap \mathcal{B}(t) \\ d_B>0}} d_B.
\end{split}
\end{equation}
Then for any $\eta \in (0,1)$, the following hold.

1. If $d_{\bar{B}}\ge0$ and the inequality
\begin{equation} \label{d_a_3}
N(s_0)\le \eta P(s_0)
\end{equation}
holds for some $s_0 \in [0,s]$, then $N(t) \le \eta P(t)$ for all $t\in[s_0,s]$.

2. If $d_{\bar{B}}<0$ and the inequality
\begin{equation} \label{d_a_4}
P(s_0)\le \eta N(s_0)
\end{equation}
holds for some $s_0 \in [0,s]$, then $P(t) \le \eta N(t)$ for all $t\in[s_0,s]$.
\end{lem}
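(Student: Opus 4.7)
The plan is to reduce both parts of the lemma to a single combined fact: $N(t)$ and $P(t)$ are piecewise constant functions of $t$ whose jumps occur only at merging times, and at each such jump both quantities decrease by the \emph{same} nonnegative amount; once this is established, $\eta < 1$ will suffice to propagate the hypothesized ratio inequality.

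First I would set up the bookkeeping. Any $B \in \mathcal{B}(t)$ is either contained in $\bar{B}$ or disjoint from it, as one sees by induction on the merging times in the ball growth structure (the nesting property \eqref{d_a_1} forbids a ball from later splitting or only partially entering $\bar{B}$). Call a ball of the first type an \emph{ancestor} of $\bar{B}$. The degree covering hypothesis, combined with additivity of degree around disjoint components containing the vortices, shows that for each ancestor $B$,
\begin{equation*}
d_B = \sum_{B_0 \in \mathcal{B}_0,\ B_0 \subset B} d_{B_0},
\end{equation*}
and summing over all ancestors of $\bar{B}$ gives the conservation law $P(t) - N(t) = d_{\bar{B}}$ for every $t \in [0,s]$.

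Next I would examine the evolution of $N$ and $P$. Between two consecutive merging times the ancestor collection grows concentrically without changing cardinality or individual degrees, so both $N$ and $P$ are constant there. At a merging time a subcollection of ancestors with degrees $d_1, \ldots, d_m$ combines into a single ball of degree $d_1 + \cdots + d_m$. The elementary inequality $(d_1 + \cdots + d_m)_- \le (d_1)_- + \cdots + (d_m)_-$ (and its analogue for the positive part) shows that $N(t)$ and $P(t)$ are each non-increasing; since their difference is conserved, they must drop by the same nonnegative amount $\Delta$ at every merging time. Mergings that do not involve ancestors of $\bar{B}$ change neither $N$ nor $P$.

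The conclusion is then immediate. Assume $d_{\bar{B}} \ge 0$ and $N(s_0) \le \eta P(s_0)$, and suppose inductively that $N(t^-) \le \eta P(t^-)$ at some merging time $t > s_0$. Since both quantities drop by the same $\Delta \ge 0$ and $\eta \le 1$,
\begin{equation*}
N(t) = N(t^-) - \Delta \le \eta P(t^-) - \Delta \le \eta \bigl( P(t^-) - \Delta \bigr) = \eta P(t),
\end{equation*}
which closes the induction and proves statement 1; statement 2 follows by exchanging the roles of $N$ and $P$. The main obstacle I expect is not computational but the careful justification of the structural dichotomy (ancestor versus disjoint) and the fact that ancestor and non-ancestor subprocesses decouple through the merging steps; once those are in place, the remainder is the one-line inductive estimate displayed above.
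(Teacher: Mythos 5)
Your proof is correct, and it follows a genuinely different route from the paper's. Where the paper partitions the balls of $\bar{B} \cap \mathcal{B}(t_k)$ into four classes $I_{-,-}, I_{-,+}, I_{+,-}, I_{+,+}$ according to the signs of their degrees and the signs of the degrees of their parents, and then closes the induction by a careful regrouping estimate, you instead isolate two structural facts: the conservation law $P(t) - N(t) = d_{\bar{B}}$ (from degree additivity plus the degree covering hypothesis) and the fact that both $N$ and $P$ are non-increasing across mergings (from subadditivity of the positive and negative parts). The conservation then forces the two drops to coincide, and the whole inductive step collapses to the one-line estimate $N(t^-) - \Delta \le \eta P(t^-) - \Delta \le \eta\bigl(P(t^-) - \Delta\bigr)$, which uses $\eta \le 1$ and $\Delta \ge 0$ in exactly one place. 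Your approach is shorter and more transparent; it also makes the role of the hypothesis $\eta < 1$ visible at a glance, whereas in the paper's proof this is buried in the two side inequalities $\eta\sum_{I_{-,+}} \le \sum_{I_{-,+}}$ and $\eta\sum_{I_{+,-}} \le \sum_{I_{+,-}}$. The price, as you note, is that the conservation law and the ``ancestors are partitioned by parents'' structure must be justified from the nesting hypothesis and connectedness of balls; the paper needs the same structural facts (it implicitly asserts them in the ``exactly one ball'' claim), so there is no real loss there. Both arguments also rely on the piecewise-constancy of $N$ and $P$ between merging times, which strictly speaking uses a little more than the stated union-nesting hypothesis alone but is supplied by the ball growth construction to which the lemma is applied.
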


\begin{proof}
Take $d_{\bar{B}}\ge0$; the following proves \eqref{d_a_3}, and a similar argument with $d_{\bar{B}}<0$ proves \eqref{d_a_4}.  Let $n(t) = \#\mathcal{B}(t)$.  Then by the inclusion property \eqref{d_a_1}, $n(t)$ is a decreasing $\mathbb{N}$-valued function.  Hence there exist finitely many times $0 = t_0 < \dotsb < t_K=s$ such that $n(t)$ is constant on $(t_i,t_{i+1})$.  This implies that for $t_i < s < t < t_{i+1}$ and $B\in\mathcal{B}(t)$, there exists exactly one ball $B' \in \mathcal{B}(s)$ such that $B' \subseteq B$, and by the degree covering property, $d_B = d_{B'}$.  It follows that $N(t)$ and $P(t)$ are also constant on each $(t_i,t_{i+1})$.  Then it suffices to show that if $N(t_k) \le \eta P(t_k)$, then $N(t_{k+1}) \le \eta P(t_{k+1})$.

Given a ball $C\in \mathcal{B}(t_{k+1})$, the inclusion property guarantees that there is a finite collection $\{B_1,\dotsc,B_j\} \subseteq \mathcal{B}(t_k)$ such that $B_i \subseteq C$ for $i=1,\dotsc,j$.  We then get
\begin{equation}\label{d_a_5}
\begin{split}
& \abs{d_C} = -\sum_{\substack{i\in \{1,\dotsc,j\} \\d_{B_i}\ge 0}} d_{B_i} + \sum_{\substack{i\in \{1,\dotsc,j\} \\d_{B_i}<0}} \abs{d_{B_i}}  \text{   if }d_C<0, \text{ and}  \\
& \abs{d_C} = \sum_{\substack{i\in \{1,\dotsc,j\} \\d_{B_i}\ge 0}} d_{B_i} - \sum_{\substack{i\in \{1,\dotsc,j\} \\d_{B_i}<0}} \abs{d_{B_i}}  \text{   if }d_C\ge 0.
\end{split}
\end{equation}
We must now subdivide the collection $\bar{B} \cap \mathcal{B}(t_k)$ according to the degrees of balls in $\bar{B} \cap \mathcal{B}(t_{k+1})$.  Define the collections
\begin{equation*}
\begin{split}
&I_{-,-} = \{ B \in \bar{B} \cap \mathcal{B}(t_k) \; | \; d_B <0,  \exists B'\in \bar{B}\cap \mathcal{B}(t_{k+1})\text{ s.t. } B\subset B', d_{B'}< 0 \} \\
&I_{-,+} = \{ B \in \bar{B} \cap \mathcal{B}(t_k) \; | \; d_B <0,  \exists B'\in \bar{B}\cap \mathcal{B}(t_{k+1})\text{ s.t. } B\subset B', d_{B'}\ge 0 \} \\
&I_{+,-} = \{ B \in \bar{B} \cap \mathcal{B}(t_k) \; | \; d_B \ge 0,  \exists B'\in \bar{B}\cap \mathcal{B}(t_{k+1})\text{ s.t. } B\subset B', d_{B'} <  0 \} \\
&I_{+,+} = \{ B \in \bar{B} \cap \mathcal{B}(t_k) \; | \; d_B \ge 0,  \exists B'\in \bar{B}\cap \mathcal{B}(t_{k+1})\text{ s.t. } B\subset B', d_{B'} \ge 0 \}.
\end{split}
\end{equation*}
Now we can estimate
\begin{multline} \label{d_a_6}
\eta \sum_{B \in I_{-,+}} \abs{d_B} + \sum_{B \in I_{-,-}} \abs{d_B} \le \sum_{B \in I_{-,+}}\abs{d_B} + \sum_{B \in I_{-,-}} \abs{d_B} = N(t_k) \\
\le \eta P(t_k) = \eta \sum_{B \in I_{+,-}} d_B + \eta \sum_{B \in I_{+,+}}d_B \le \sum_{B \in I_{+,-}} d_B + \eta \sum_{B \in I_{+,+}}d_B.
\end{multline}
After regrouping terms according to containment and using \eqref{d_a_5} and \eqref{d_a_6} we conclude
\begin{equation}\label{d_a_7}
N(t_{k+1}) = \sum_{B \in I_{-,-}} \abs{d_B} - \sum_{B \in I_{+,-}}d_B \le \eta \sum_{B \in I_{+,+}} d_B - \eta \sum_{B \in I_{-,+}}\abs{d_B} = \eta P(t_{k+1}).
\end{equation}
\end{proof}

We use this lemma to define the transition times.

\begin{cor}\label{degree_analysis_firstgen}
Assume the hypotheses and notation of Lemma \ref{degree_analysis}.  If $d_{\bar{B}} \ge 0$ then there exists $t_0 \in [0,s]$ such that $\eta P(t) <  N(t)$ for $t\in[0,t_0)$ and $N(t) \le \eta P(t)$ for $t\in[t_0,s]$.  Similarly, if $d_{\bar{B}} < 0$ then there exists $t_0 \in [0,s]$ such that $\eta N(t) <  P(t)$ for $t\in[0,t_0)$ and $P(t) \le \eta N(t)$ for $t\in[t_0,s]$.  We call these times, $t_0$, the transition times.
\end{cor}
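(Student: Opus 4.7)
The plan is to define $t_0$ as the infimum of the set of times at which the desired inequality already holds, and then verify that this infimum is actually attained. Lemma \ref{degree_analysis} provides the forward-in-time propagation of the inequality; the remaining task is to locate the first time it switches on.

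First I would handle the case $d_{\bar B} \ge 0$. The key initial observation is that at $t = s$, since the balls in $\mathcal{B}(s)$ are disjoint and $\bar{B} \in \mathcal{B}(s)$, we have $\bar{B} \cap \mathcal{B}(s) = \{\bar{B}\}$. Hence $N(s) = 0$ and $P(s) = d_{\bar B} \ge 0$, so $N(s) \le \eta P(s)$ is trivially satisfied. Thus the set $I := \{t \in [0,s] : N(t) \le \eta P(t)\}$ is non-empty. Lemma \ref{degree_analysis}, applied with any $s_0 \in I$, gives $[s_0, s] \subset I$; in other words $I$ is upward-closed in $[0,s]$ with right endpoint $s$. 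Set $t_0 := \inf I$.

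The main step is to verify that $t_0 \in I$, so that $I = [t_0, s]$ (not $(t_0, s]$), which is what allows me to say $\eta P(t) < N(t)$ strictly for $t \in [0, t_0)$. For this, I would use the observation already contained in the proof of Lemma \ref{degree_analysis}: between consecutive merging times from the finite set $T$ of the ball growth lemma, no ball is created or destroyed, growth is purely conformal, and by the degree-covering hypothesis each ball's degree is preserved. Hence $N$ and $P$ are $\mathbb{N}$-valued step functions whose only possible jumps occur at the finitely many merging times. With the convention that $\mathcal{B}(t_k)$ denotes the post-merged collection (so that $N, P$ are right-continuous at merging times), the set $I$ is a finite union of intervals of the form $[t_k, t_{k+1})$, together with $[t_K, s]$. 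Being upward-closed, $I$ therefore equals $[t_0, s]$ for $t_0$ equal either to $0$ or to one of the merging times, and in particular $t_0 \in I$. For $t < t_0$, the failure $t \notin I$ reads $\eta P(t) < N(t)$, giving the strict inequality in the first half of the corollary.

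The case $d_{\bar B} < 0$ is entirely symmetric: at $t = s$ one has $P(s) = 0 \le \eta |d_{\bar B}| = \eta N(s)$, and the corresponding set $I' := \{t : P(t) \le \eta N(t)\}$ is handled by the same upward-closedness (from part 2 of Lemma \ref{degree_analysis}) combined with the step-function structure of $P$ and $N$. I expect the only place that requires genuine care is a consistent convention for the values of $N(t), P(t)$ at merging times, since without right-continuity the infimum $t_0$ need not lie in $I$, and the clean dichotomy "strict before, non-strict after" could be spoiled.
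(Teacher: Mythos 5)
Your proof is correct and takes essentially the same approach as the paper: verify $N(s)\le\eta P(s)$ holds trivially since $\bar{B}\cap\mathcal{B}(s)=\{\bar{B}\}$, then propagate forward with Lemma \ref{degree_analysis}. You additionally make explicit a detail the paper leaves implicit, namely that $t_0:=\inf\{t:N(t)\le\eta P(t)\}$ itself belongs to the set, via the right-continuous step-function structure of $N$ and $P$ already established inside the proof of Lemma \ref{degree_analysis}.
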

\begin{proof}
Assume $d_{\bar{B}} \ge 0$.  Since there is only one ball in $\bar{B} \cap \mathcal{B}(s)$, and the degree in $\bar{B}$ is nonnegative, the inequality $N(s) \le \eta P(s)$ is satisfied trivially.  An application of Lemma \ref{degree_analysis} proves the existence of $t_0$.  A similar argument works for the case when $d_{\bar{B}} <0$.
\end{proof}

With the transition times defined we can finally set the values of the $\beta_B$.  Define the collection $\{\mathcal{D}(t) \}_{t\in[0,s+\sigma]}$ by
\begin{equation}\label{f_b_14}
\mathcal{D}(t) = \begin{cases}
            \mathcal{C}(t), & t\in[0,\sigma) \\
            \mathcal{B}(t-\sigma), & t\in [\sigma,s+\sigma].
                 \end{cases}
\end{equation}
Let $\eta \in (0,1)$.  For each $\bar{B}\in\mathcal{B}$ let $t_{\bar{B}}\in [0,s+\sigma]$ denote the transition time for the collection $\bar{B} \cap \mathcal{D}(t)$ obtained from Corollary \ref{degree_analysis_firstgen} (the times depend on $\eta$).  We now specify the values of $\beta_B$ in the definition of $G$.  Note that the construction of $G$ only requires specifying the values of $\beta_B$ for those balls $B$ such that $B \subset \bar{B} \in \mathcal{B}$ with $\bar{B} \subset \Omega_{\varepsilon}$.  Then for  $B \in \bar{B} \cap \mathcal{D}(t)$ for some $\bar{B} \in \mathcal{B}$, we define
\begin{equation}\label{f_b_15}
\beta_B = \begin{cases}
        1, & \text{ if } t\in[0,t_{\bar{B}}) \\
        \abs{d_{\bar{B}}}^{\frac{1}{2}} \left( \sum\limits_{B' \in \bar{B} \cap \mathcal{D}(t)}d_{B'}^2         \right)^{-\frac{1}{2}}, & \text{ if } t\in[t_{\bar{B}},s+\sigma].
          \end{cases}
\end{equation}
Note that if
\begin{equation*}
\sum\limits_{B' \in \bar{B} \cap \mathcal{D}(t)}d_{B'}^2 =0,
\end{equation*}
then $d_{\bar{B}} = 0$ as well, and we take the second case in \eqref{f_b_15} to equal $0$.  Further, note that in the second case, the $\beta_B$ are chosen so that for $t \in [t_{\bar{B}},s+\sigma]$
\begin{equation}
\sum_{B\in \bar{B}\cap \mathcal{D}(t)} d_B^2 \beta_B^2 = \abs{d_{\bar{B}}}.
\end{equation}

The following proposition shows that $G$ is still useful for the lower bounds with these values of $\beta_B$.

\begin{prop}\label{final_balls_part_2}
With $G$ defined as above, and under the assumptions of Proposition \ref{final_balls}, we have the estimate
\begin{equation}\label{f_b_12}
\frac{1}{2} \int_{V \backslash \omega_{1/2}^{3/2}} \abs{\nabla_A
v}^2 + \frac{r^2}{2}\int_V (\curl{A})^2  \ge \frac{1}{2}\int_{V
\backslash \omega_{1/2}^{3/2}} \abs{\nabla_{A+G} v}^2 + \pi D
\left(\log{\frac{r}{r_{\Omega_{\varepsilon}}(\omega_{1/2}^{3/2})}}
- C \right).
\end{equation}
\end{prop}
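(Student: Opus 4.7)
The plan is to derive \eqref{f_b_12} directly from estimate \eqref{f_b_2} of Proposition \ref{final_balls}. The $\frac{1}{2}\int_{V\setminus \omega_{1/2}^{3/2}} \abs{\nabla_{A+G} v}^2$ term on the right of \eqref{f_b_2} is exactly what we need in \eqref{f_b_12}, so the work reduces to two things: (i) showing that the curl contributions on the left of \eqref{f_b_2} are dominated by $\frac{r^2}{2}\int_V (\curl A)^2$, which is immediate since $r(\bar B)(r-r(\mathcal B_0)) \le r^2$ and the $\bar B$'s are disjoint; and (ii) showing that the two $t$-integrals appearing on the right of \eqref{f_b_2} together exceed $\pi D\bigl(\log\tfrac{r}{r_{\Omega_\varepsilon}(\omega_{1/2}^{3/2})}-C\bigr)$.

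The heart of (ii) is a pointwise-in-$t$ lower bound for each $\bar B\in\mathcal B$ with $\bar B\subset\Omega_\varepsilon$ on
\[
S_{\bar B}(t) \;:=\; \sum_{B\in \bar B\cap \mathcal D(t)} \pi d_B^2\Bigl(2\beta_B-\beta_B^2-\tfrac{\beta_B^2\, r(\mathcal D(t))}{2\lambda_t}\Bigr),
\]
where $\mathcal D(t)$ and $\lambda_t$ are the family and denominator appropriate to each of the two integrals. I will show $S_{\bar B}(t)\ge \pi\abs{d_{\bar B}}\bigl(1-r(\mathcal D(t))/(2\lambda_t)\bigr)$ by splitting at the transition time $t_{\bar B}$ from Corollary \ref{degree_analysis_firstgen}. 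For $t<t_{\bar B}$, all $\beta_B=1$, so $S_{\bar B}(t)=\pi(1-r(\mathcal D(t))/(2\lambda_t))\sum d_B^2$ and the estimate follows at once from $\sum_{B\in \bar B\cap \mathcal D(t)} d_B^2 \ge \abs{d_{\bar B}}$ (Lemma 4.2 of \cite{ss_book}), provided the parenthetical factor is nonnegative. For $t\ge t_{\bar B}$, the constants $\beta_B$ are the same for all balls $B\in\bar B\cap\mathcal D(t)$ and are chosen so that $\beta_B^2\sum d_B^2=\abs{d_{\bar B}}$; hence $\beta_B\sum d_B^2 = \sqrt{\abs{d_{\bar B}}\sum d_B^2}\ge \abs{d_{\bar B}}$ by the same degree inequality, and substitution gives the claimed bound by AM-GM-type cancellation.

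Summing $S_{\bar B}(t)$ over all $\bar B\in\mathcal B$ with $\bar B\subset\Omega_\varepsilon$ produces the pointwise integrand $\pi D(1-r(\mathcal B(t))/(2(r-r(\mathcal B_0))))$ for the first integral of \eqref{f_b_2}, and an analogous integrand involving $D_{\mathcal C(\sigma)}\ge D$ for the second (the inequality $D_{\mathcal C(\sigma)}\ge D$ again uses Lemma 4.2 of \cite{ss_book}, since the ball growth from $\mathcal C(\sigma)$ onwards can only merge degrees). Using the exponential growth $r(\mathcal D(t))=e^t r(\mathcal D(0))$ from item 4 of Lemma \ref{ball_growth_lemma}, both time integrals can be computed explicitly to give $\pi D(\log(r/r(\mathcal B_0))-\tfrac12)$ and $\pi D(\log(r(\mathcal C(\sigma))/r(\mathcal C_0))-\tfrac12)$ respectively. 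With $r(\mathcal B_0)=8R$, $r(\mathcal C(\sigma))=3R$, and $r(\mathcal C_0)\le \tfrac32 r_{\Omega_\varepsilon}(\omega_{1/2}^{3/2})$ (by Step 4 of Proposition \ref{init_balls} combined with Lemma \ref{radius_bounds}), the two log contributions telescope to $\log(r/r_{\Omega_\varepsilon}(\omega_{1/2}^{3/2}))$ up to a universal additive constant, into which the $-\tfrac12$ terms and the $\log(3/16)$-type constants are absorbed.

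The main obstacle is purely bookkeeping: the second integral of \eqref{f_b_2} sums over $\bar B\in\mathcal C(\sigma)$ while $\beta_B$ is defined via the ambient ball of $\mathcal B$ containing $B$, so some care is required to relate the two groupings of balls. A secondary point is to verify that the factors $(1-r(\mathcal D(t))/(2\lambda_t))$ stay nonnegative on the relevant time intervals, which follows from $\lambda \ge r(\mathcal D(t))/2$ once $r$ is sufficiently larger than $r(\mathcal B_0)$; this is guaranteed by the assumption $r>C\varepsilon^{\alpha/2}\gtrsim r(\mathcal B_0)$ in Proposition \ref{final_balls}.
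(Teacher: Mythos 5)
Your core strategy matches the paper's proof: drop the $\abs{\nabla_{A+G}v}^2$ and curl terms as you describe, split the $t$-integrand in each $\bar B\subset\Omega_\varepsilon$ at the transition time $t_{\bar B}$, use $\sum d_B^2\ge\abs{d_{\bar B}}$ when $\beta_B=1$, use the AM--GM-style cancellation $2\beta_B\sum d_B^2-\beta_B^2\sum d_B^2(1+\cdots)\ge\abs{d_{\bar B}}(1-\cdots)$ after the transition, check nonnegativity of the parenthetical factors (the paper does this for the full term $2\beta_B-\beta_B^2-\beta_B^2r/(2\lambda)$ using $\beta_B\le1$, a slightly stronger statement than your $(1-r/(2\lambda))\ge0$), integrate in $t$ to get $\pi D(s+\sigma-1)$, and convert $s+\sigma$ into $\log(r/r_{\Omega_\varepsilon}(\omega_{1/2}^{3/2}))$ via \eqref{f_b_11}/\eqref{f_b_22}.

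The one place your sketch goes off track is precisely the "bookkeeping" you flag at the end. Your proposed patch for the second integral---sum the per-$\bar B'$ bound over $\bar B'\in\mathcal C(\sigma)$ and use $D_{\mathcal C(\sigma)}\ge D$---only works if $\beta_B\equiv1$ throughout phase one, i.e.\ if every transition time $t_{\bar B}\ge\sigma$. That is not guaranteed. When $t_{\bar B}<\sigma$, the $\beta_B$'s are normalized across \emph{all} of $\bar B\cap\mathcal D(t)$ (the union over all $\bar B'\in\mathcal C(\sigma)$ contained in $\bar B$), so the restricted sum $\sum_{B\in\bar B'\cap\mathcal C(t)}$ does not simplify: there is no clean $\pi\abs{d_{\bar B'}}(1-\cdots)$ bound per $\bar B'$. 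The paper's resolution is not $D_{\mathcal C(\sigma)}\ge D$ but rather a regrouping: shift the second integral by $\sigma$ (change of variables), combine with the first into a single integral over $[0,s+\sigma]$ against the concatenated family $\mathcal D(t)$ from \eqref{f_b_14}, and re-index the double sum so the outer sum runs over $\bar B\in\mathcal B$, $\bar B\subset\Omega_\varepsilon$ \emph{throughout}. After this regrouping, your transition-time argument applies uniformly to $S_{\bar B}(t)$ for $t\in[0,s+\sigma]$, and the integral evaluates directly to $\pi D(s+\sigma-1)$. So the missing ingredient is exactly the step \eqref{f_b_17}, and it is more than cosmetic---without it the second integral cannot be estimated by the stated method.
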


\begin{proof}
To prove \eqref{f_b_12} we must deal with the sums in the integrands in \eqref{f_b_2}.  We begin by showing that the terms in parentheses are nonnegative.  Since $r(\mathcal{B}_0) = C\varepsilon^{\alpha/2}$ and $\beta_B \le 1$, we can estimate
\begin{equation}
\begin{split}
&2\beta_B -\beta_B^2 - \frac{\beta_B^2 r(\mathcal{B}(t))}{2(r-r(\mathcal{B}_0))}
= \beta_B^2\left(\frac{2}{\beta_B} -1-\frac{r(\mathcal{B}(t))}{2(r-r(\mathcal{B}_0))}   \right) \\
&\ge \beta_B^2\left(  1 - \frac{r(\mathcal{B}(t))}{2(r-r(\mathcal{B}_0))}    \right)
\ge \beta_B^2 \left(\frac{r-2r(\mathcal{B}_0)}{2(r-r(\mathcal{B}_0))}   \right) \ge 0.
\end{split}
\end{equation}
By construction,
\begin{equation}\label{f_b_22}
r(\mathcal{C}_0) < \frac{3}{2} r(\omega_{1/2}^{3/2} \cap \tilde{\Omega}) \le \frac{3}{2}R = \frac{1}{2} r(\mathcal{C}(\sigma)),
\end{equation}
and so we can similarly conclude that
\begin{equation}\label{f_b_16}
2\beta_B -\beta_B^2 - \frac{\beta_B^2 r(\mathcal{B}(t))}{2(r(\mathcal{C}(\sigma))-r(\mathcal{C}_0))} \ge 0.
\end{equation}
A simple change of variables $t \mapsto t + \sigma$ allows us to rewrite
\begin{equation}\label{f_b_17}
\begin{split}
& \int_0^s \sum_{\substack{\bar{B} \in \mathcal{B} \\ \bar{B} \subset \Omega_{\varepsilon}}} \sum_{B \in \bar{B} \cap \mathcal{B}(t)} \pi d_B^2 \left(2\beta_B -\beta_B^2 - \frac{\beta_B^2 r(\mathcal{B}(t))}{2(r-r(\mathcal{B}_0))} \right)dt  \\
& + \int_0^{\sigma} \sum_{\substack{\bar{B} \in \mathcal{C}(\sigma) \\ \bar{B} \subset \Omega_{\varepsilon}}}\sum_{B \in \bar{B} \cap \mathcal{C}(t)} \pi d_B^2\left(2\beta_B -\beta_B^2 - \frac{\beta_B^2 r(\mathcal{C}(t))}{2 (r(\mathcal{C}(\sigma)) - r(\mathcal{C}_0))} \right)dt \\
&= \sum_{\substack{\bar{B} \in \mathcal{B} \\ \bar{B} \subset \Omega_{\varepsilon}}} \int_0^{s+\sigma}
\sum_{B \in \bar{B} \cap \mathcal{D}(t)} \pi d_B^2 \left( 2\beta_B -\beta_B^2 - \frac{\beta_B^2 r(\mathcal{D}(t))}{2 \lambda(t) } \right)dt,
\end{split}
\end{equation}
where
\begin{equation*}
\lambda(t) = \begin{cases}
        r(\mathcal{C}(\sigma)) - r(\mathcal{C}_0), & t\in[0,\sigma) \\
        r-r(\mathcal{B}_0), & t\in[\sigma,s+\sigma].
             \end{cases}
\end{equation*}

Fix $\bar{B} \in \mathcal{B}$ such that $\bar{B} \subset \Omega_{\varepsilon}$.  For $t \in [0,t_{\bar{B}})$ we have that $\beta_B = 1$, and hence
\begin{equation}\label{f_b_23}
\sum_{B \in \bar{B}\cap \mathcal{D}(t)} \pi d_B^2 \left( 2\beta_B -\beta_B^2 - \frac{\beta_B^2 r(\mathcal{D}(t))}{2 \lambda(t) } \right) \ge \pi d_{\bar{B}} \left(1-\frac{r(\mathcal{D}(t))}{2 \lambda(t)} \right).
\end{equation}
For $t \in [t_{\bar{B}},s+\sigma]$ we similarly estimate
\begin{equation}\label{f_b_19}
\begin{split}
\sum_{B \in \bar{B} \cap \mathcal{D}(t)}d_B^2 \left(2\beta_B - \beta_B^2 -
 \frac{\beta_B^2 r(\mathcal{D}(t))}{2 \lambda(t)} \right)
&=2 \abs{d_{\bar{B}}}^{\frac{1}{2}} \left( \sum\limits_{B \in \bar{B} \cap \mathcal{D}(t)}d_{B}^2       \right)^{\frac{1}{2}} - \abs{d_{\bar{B}}}\left( 1+ \frac{r(\mathcal{D}(t))}{2 \lambda(t)}  \right) \\
&\ge 2 \abs{d_{\bar{B}}}^{\frac{1}{2}} \abs{d_{\bar{B}}}^{\frac{1}{2}} - \abs{d_{\bar{B}}}\left( 1+ \frac{r(\mathcal{D}(t))}{2 \lambda(t)}  \right)\\
&=\abs{d_{\bar{B}}}\left( 1- \frac{r(\mathcal{D}(t))}{2 \lambda(t)}  \right).
\end{split}
\end{equation}
This proves that
\begin{equation}\label{f_b_20}
\begin{split}
  &\sum_{\substack{\bar{B} \in \mathcal{B} \\ \bar{B} \subset \Omega_{\varepsilon}}} \int_0^{s+\sigma}
\sum_{B \in \bar{B} \cap \mathcal{D}(t)} \pi d_B^2 \left( 2\beta_B -\beta_B^2 - \frac{\beta_B^2 r(\mathcal{D}(t))}{2 \lambda(t) } \right)dt \\
& \ge    \pi\sum_{\substack{\bar{B} \in \mathcal{B} \\ \bar{B} \subset \Omega_{\varepsilon}}}
 \abs{d_{\bar{B}}} \int_0^{s+\sigma}\left( 1- \frac{r(\mathcal{D}(t))}{2 \lambda(t)}  \right) dt\\
& = \pi D (s+ \sigma -1),
\end{split}
\end{equation}
where the last equality follows since $r(\mathcal{D}(t))' = r(\mathcal{D}(t))$ for $t \in [0,s+\sigma]\backslash\{\sigma\}$ and $\lambda(t)$ is piecewise constant.

An application of Lemma \ref{radius_bounds} and the bound \eqref{f_b_22} show that
\begin{equation}\label{f_b_11}
r(\mathcal{C}_0) \le \frac{3}{2}r_{\Omega_{\varepsilon}}(\omega_{1/2}^{3/2}).
\end{equation}
Recall that $r(\mathcal{C}(\sigma))= 3r(\mathcal{B}_0)/8$.  This and \eqref{f_b_11} provide the bound
\begin{equation}\label{f_b_21}
\begin{split}
 s+\sigma -1 &= \left(\log{\frac{r}{r(\mathcal{B}_0)}} + \log{\frac{r(\mathcal{C}(\sigma))}{r(\mathcal{C}_0)}}   -1 \right) \\
 & \ge \left( \log{\frac{r}{r(\mathcal{B}_0)}} + \log{\frac{r(\mathcal{B}_0)}{4 r_{\Omega_{\varepsilon}}(\omega_{1/2}^{3/2})}} -1 \right) \\
& = \left( \log{\frac{r}{r_{\Omega_{\varepsilon}}(\omega_{1/2}^{3/2})}}  - C \right).
\end{split}
\end{equation}
Plugging \eqref{f_b_20} and \eqref{f_b_21} into \eqref{f_b_2} yields \eqref{f_b_12}.

\end{proof}

\section{Proof of the main results}\label{thms}

With our technical tools sufficiently developed, we may now
assemble them for use in proving the main theorems.

 We begin with a lemma on the use of the co-area formula in conjunction with sub- and super-level sets.

\begin{lem}\label{co_area_subset}
Let $u:\Omega \rightarrow \mathbb{C}$ and $A: \Omega \rightarrow \Rn{2}$ both be $C^1$ and write (at least locally) $u=\rho v$ with $\rho = \abs{u}.$  Fix $t_0>0$ and $V \subset \Omega$ to be compact.  Then
\begin{equation}\label{c_a_s_1}
\begin{split}
\frac{1}{2}\int_{V \cap \{\rho \ge t_0\}} \rho^2 \abs{\nabla_A v}^2 &= \int_{t_0}^{\infty} -t^2\frac{d}{dt}\left(
\frac{1}{2}\int_{V \cap \{\rho \ge t\}} \abs{\nabla_A v}^2\right)dt \\
& = \frac{t_0^2}{2} \int_{V \cap \{ \rho \ge t_0 \}} \abs{\nabla_A v}^2 + \int_{t_0}^{\infty}2t\left( \frac{1}{2}\int_{V \cap \{\rho \ge t\}} \abs{\nabla_A v}^2 \right)dt
\end{split}
\end{equation}
and
\begin{equation}\label{c_a_s_2}
\begin{split}
\frac{1}{2}\int_{V \cap \{\rho \le t_0\}} \rho^2 \abs{\nabla_A v}^2 &= \int_0^{t_0} -t^2\frac{d}{dt}\left(
\frac{1}{2}\int_{V \cap \{\rho \ge t\} \cap \{\rho \le t_0\}} \abs{\nabla_A v}^2\right)dt \\
& = \int_0^{t_0} 2t\left( \frac{1}{2}\int_{V \cap \{\rho \ge t\} \cap \{ \rho \le t_0\}} \abs{\nabla_A v}^2 \right)dt.
\end{split}
\end{equation}
\end{lem}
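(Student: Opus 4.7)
Both identities are obtained by combining the layer-cake decomposition $s^2 = 2\int_0^s t\, dt$ with Fubini's theorem; the first equality in each display then comes from an integration by parts.

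First I would treat \eqref{c_a_s_1}. On $\{\rho \ge t_0\}$ we have $\rho^2 = t_0^2 + 2\int_{t_0}^{\infty} t\, \mathbf{1}_{\{\rho \ge t\}}\, dt$. Multiplying by $\tfrac{1}{2}\abs{\nabla_A v}^2$, integrating over $V \cap \{\rho \ge t_0\}$, and applying Fubini (valid because $V$ is compact, $\rho$ is continuous hence bounded on $V$, and $v$ is $C^1$ on $\{u \ne 0\} \supseteq \{\rho \ge t_0\}$) swaps the $dx$ and $dt$ integrations and yields the second equality. For the first equality, apply Stieltjes integration by parts to the non-increasing function $F(t) := \tfrac{1}{2}\int_{V \cap \{\rho \ge t\}}\abs{\nabla_A v}^2$, which is compactly supported in $[0,\max_V \rho]$: one finds $\int_{t_0}^{\infty}-t^2\,dF(t) = \bigl[-t^2 F(t)\bigr]_{t_0}^{\infty} + \int_{t_0}^{\infty} 2tF(t)\,dt = t_0^2 F(t_0) + \int_{t_0}^{\infty}2tF(t)\,dt$, which is exactly the second line of \eqref{c_a_s_1}.

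The proof of \eqref{c_a_s_2} is analogous. On $\{\rho \le t_0\}$ the layer-cake decomposition reads $\rho^2 = 2\int_0^{t_0} t\, \mathbf{1}_{\{\rho \ge t\}}\, dt$; the same Fubini argument produces the second equality, and integration by parts applied to $G(t) := \tfrac{1}{2}\int_{V \cap \{t \le \rho \le t_0\}}\abs{\nabla_A v}^2$ over $[0,t_0]$ yields the first. The boundary term at $t=0$ vanishes thanks to the $t^2$ factor, and the one at $t = t_0$ vanishes as soon as $\{\rho = t_0\}$ has zero planar measure---which is true for a.e.\ $t_0$ and suffices for the values of $t_0$ relevant to the applications.

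No substantive analytic difficulty arises; the statement is essentially a careful bookkeeping of Fubini applied to the layer-cake identity. The two points that require minimal care are the justification of the Fubini swap (immediate from boundedness of the integrands on the relevant subsets $V\cap\{\rho \ge t\}$, where $\rho$ is bounded away from $0$) and the Stieltjes integration by parts (valid since $F$ and $G$ are of bounded variation with compact support, which follows from monotonicity and the boundedness of $\rho$ on the compact set $V$).
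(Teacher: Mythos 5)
Your proof is correct and follows essentially the same path the paper's very terse proof indicates: a co-area/layer-cake identity combined with integration by parts. The only organizational difference is that you establish the outer equality (leftmost $=$ rightmost) directly via the layer-cake decomposition $\rho^2 = t_0^2 + 2\int_{t_0}^\infty t\,\mathbf{1}_{\{\rho\ge t\}}\,dt$ plus Fubini, and then recover the middle expression by Stieltjes integration by parts, whereas the paper's two-sentence proof goes left $=$ middle (co-area) then middle $=$ right (IBP). These are the same three identities proved in a slightly different order; your route is arguably a bit more self-contained since it avoids interpreting $\frac{d}{dt}\int_{V\cap\{\rho\ge t\}}\abs{\nabla_A v}^2$ in terms of level-set integrals. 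Your observation that the boundary term at $t=t_0$ in \eqref{c_a_s_2} requires $\{\rho=t_0\}$ to have zero Lebesgue measure is a real subtlety the paper glosses over; it is harmless here because the subsequent applications (with $t_0=1/2$ and $t_0=1-\delta$) only use the equality between the outer two expressions, which your layer-cake derivation shows holds unconditionally.
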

\begin{proof}
The first equality in \eqref{c_a_s_1} follows from the co-area formula, and the second follows by integrating by parts.  The same argument proves \eqref{c_a_s_2}.
\end{proof}

\subsection{Proof of Theorem \ref{energy_bound}}
Theorem \ref{energy_bound}  is an improvement on Theorem 4.1 of
\cite{ss_book} that incorporates the $G$ term into the lower
bounds on the vortex balls.  The crucial difference between this
result and those in the previous section is that this one bounds
the energy of the function $u:\Omega \rightarrow \mathbb{C}$,
whereas the previous results were for the $\mathbb{S}^1$-valued
map $v=u/\abs{u}:\Omega \rightarrow \mathbb{S}^1\hookrightarrow\mathbb{C}$. The statement
made in the introduction of Theorem \ref{energy_bound} should be
understood with  $G: \Omega \rightarrow \Rn{2}$  the function
defined in item 4 of Proposition \ref{final_balls} with $\beta_B$
values given by \eqref{f_b_15}.

Proposition \ref{final_balls} produces the collection $\mathcal{B}$ and guarantees items 1 and 2.  The rest of the proof is devoted to showing that \eqref{e_b_15} holds.  By Lemma \ref{polar} we have, writing $u=\rho v$,
\begin{equation}\label{e_b_1}
\begin{split}
\frac{1}{2} \int_V \abs{\nabla_A u}^2 &+ \frac{1}{2\varepsilon^2}(1-\abs{u}^2)^2 + r^2(\curl{A})^2 \\
&= \frac{1}{2} \int_V \abs{\nabla \rho}^2 + \frac{1}{2\varepsilon^2}(1-\rho^2)^2 +\rho^2 \abs{\nabla_A v}^2 + r^2 (\curl{A})^2.
\end{split}
\end{equation}
An application of the co-area formula and integration by parts, the same as that used in Lemma \ref{co_area_subset}, shows that
\begin{equation}\label{e_b_2}
\frac{1}{2} \int_V \rho^2 \abs{\nabla_A v}^2 = \int_0^{\infty} 2t \left( \frac{1}{2} \int_{V\backslash \omega_t}\abs{\nabla_A v}^2\right)dt.
\end{equation}
Then
\begin{equation}\label{e_b_3}
\begin{split}
&\frac{1}{2} \int_V \rho^2 \abs{\nabla_A v}^2 + r^2 (\curl{A})^2 \\
&\ge \int_0^{\infty} 2t \left( \frac{1}{2} \int_{V\backslash \omega_t}\abs{\nabla_A v}^2\right)dt + \int_0^{1-\delta} 2t \left( \frac{r^2}{2} \int_V (\curl{A})^2 \right)dt \\
&= \int_0^{\frac{1}{2}} 2t \left( \frac{1}{2} \int_{V\backslash \omega_t} \abs{\nabla_A v}^2 + \frac{r^2}{2}\int_V (\curl{A})^2  \right)dt + \int_{1-\delta}^{\infty} 2t\left( \frac{1}{2} \int_{V\backslash \omega_t} \abs{\nabla_A v}^2 \right)dt \\
&+ \int_{\frac{1}{2}}^{1-\delta} 2t \left( \frac{1}{2} \int_{V\backslash \omega_t} \abs{\nabla_A v}^2 + \frac{r^2}{2}\int_V (\curl{A})^2  \right)dt \\
&:= A_1 + A_2 + A_3.
\end{split}
\end{equation}
We further break up the first term on the right side of \eqref{e_b_3}:
\begin{equation}\label{e_b_4}
\begin{split}
A_1 &= \int_0^{\frac{1}{2}} 2t \left( \frac{1}{2} \int_{V\backslash \omega_t}
\abs{\nabla_A v}^2 + \frac{r^2}{2}\int_V (\curl{A})^2  \right)dt\\  &= \int_0^{\frac{1}{2}} 2t \left( \frac{1}{2} \int_{\omega_{1/2}^{3/2} \backslash \omega_t} \abs{\nabla_A v}^2 \right) dt
+ \int_0^{\frac{1}{2}} 2t \left( \frac{1}{2} \int_{V\backslash \omega_{1/2}^{3/2}} \abs{\nabla_A v}^2 + \frac{r^2}{2}\int_V (\curl{A})^2  \right)dt.
\end{split}
\end{equation}
Then, by writing $\omega_{1/2}^{3/2} \backslash \omega_t = \omega^{3/2} \cup \omega_{1/2} \backslash \omega_t$,  noting that $\omega_{1/2} \subset V$, and applying \eqref{c_a_s_2} with $t_0 = 1/2$, we may conclude that
\begin{equation}
\begin{split}
\int_0^{\frac{1}{2}} 2t \left( \frac{1}{2} \int_{\omega_{1/2}^{3/2} \backslash
 \omega_t} \abs{\nabla_A v}^2 \right) dt
 &= \int_0^{\frac{1}{2}} 2t \left( \frac{1}{2}\int_{\omega^{3/2}}\abs{\nabla_A v}^2 + \frac{1}{2}\int_{\omega_{1/2}\backslash \omega_t}\abs{\nabla_A v}^2 \right)dt\\
 &=  \frac{1}{8} \int_{\omega^{3/2}}\abs{\nabla_A v}^2 + \frac{1}{2} \int_{\omega_{1/2}}\rho^2 \abs{\nabla_A v}^2.
\end{split}
\end{equation}
Since the integrand does not depend on $t$, we have
\begin{equation}
\int_0^{\frac{1}{2}} 2t \left( \frac{1}{2} \int_{V\backslash
\omega_{1/2}^{3/2}} \abs{\nabla_A v}^2 + \frac{r^2}{2}\int_V
(\curl{A})^2  \right)dt  = \frac{1}{4}\left( \frac{1}{2}
\int_{V\backslash \omega_{1/2}^{3/2}} \abs{\nabla_A v}^2 +
\frac{r^2}{2}\int_V (\curl{A})^2 \right).
\end{equation}
From \eqref{c_a_s_1}, applied with $t_0 = 1-\delta$, we bound the second term in \eqref{e_b_3}
\begin{equation}\label{e_b_5}
\begin{split}
A_2& = \int_{1-\delta}^{\infty} 2t\left( \frac{1}{2} \int_{V\backslash \omega_t} \abs{\nabla_A v}^2 \right)dt = \frac{1}{2} \int_{V\backslash \omega_{1-\delta}} (\rho^2 - (1-\delta)^2) \abs{\nabla_A v}^2 \\
& \ge \frac{1}{2} \int_{\omega^{3/2}} (\rho^2 - 1) \abs{\nabla_A v}^2.
\end{split}
\end{equation}
When $\rho \ge \frac{3}{2}$, the inequality $ \rho^2 - \frac{3}{4} \ge \frac{2}{3} \rho^2$ holds; hence,
\begin{equation}\label{e_b_6}
\frac{1}{2} \int_{\omega^{3/2}} (\rho^2 - 1) \abs{\nabla_A v}^2 + \frac{1}{8} \int_{\omega^{3/2}}\abs{\nabla_A v}^2 \ge \frac{1}{3} \int_{\omega^{3/2}} \rho^2 \abs{\nabla_A v}^2.
\end{equation}
We now combine \eqref{e_b_3} -- \eqref{e_b_6}, leaving $A_3$ as it was, and arrive at the bound
\begin{multline}\label{e_b_7}
\frac{1}{2} \int_V \rho^2 \abs{\nabla_A v}^2 + r^2 (\curl{A})^2  \ge
\frac{1}{4}\left( \frac{1}{2} \int_{V\backslash \omega_{1/2}^{3/2}} \abs{\nabla_A v}^2 + \frac{r^2}{2}\int_V (\curl{A})^2 \right) \\
+ \int_{\frac{1}{2}}^{1-\delta} 2t \left( \frac{1}{2} \int_{V\backslash \omega_t} \abs{\nabla_A v}^2 + \frac{r^2}{2}\int_V (\curl{A})^2  \right)dt
+ \frac{1}{3} \int_{\omega_{1/2}^{3/2}} \rho^2 \abs{\nabla_A v}^2.
\end{multline}

Recalling the notation
\begin{equation*}
F_{\varepsilon}(\rho,V) = \frac{1}{2} \int_V \abs{\nabla \rho}^2 + \frac{1}{2\varepsilon^2}(1-\rho^2)^2
\end{equation*}
and the decomposition \eqref{e_b_1}, we can use \eqref{e_b_7} to see that
\begin{equation}\label{e_b_001}
\begin{split}
\frac{1}{2} \int_V \abs{\nabla_A u}^2 + \frac{1}{2\varepsilon^2}(1-\abs{u}^2)^2 + r^2(\curl{A})^2
 &= F_\varepsilon(\rho,V) + \frac{1}{2} \int_V \rho^2 \abs{\nabla_A v}^2 + r^2 (\curl{A})^2 \\
& \ge B_1 + B_2 + B_3,
\end{split}
\end{equation}
where
\begin{equation*}
B_1 := \frac{1}{4}\left( F_\varepsilon(\rho,V) + \frac{1}{2} \int_{V\backslash \omega_{1/2}^{3/2}} \abs{\nabla_A v}^2 + \frac{r^2}{2}\int_V (\curl{A})^2 \right),
\end{equation*}
\begin{equation*}
B_2 := \frac{3\beta}{4} F_\varepsilon(\rho,V) + \int_{\frac{1}{2}}^{1-\delta} 2t \left( \frac{1}{2} \int_{V\backslash \omega_t} \abs{\nabla_A v}^2 + \frac{r^2}{2}\int_V (\curl{A})^2  \right)dt,
\end{equation*}
\begin{equation*}
B_3 :=  \frac{3(1-\beta)}{4}F_\varepsilon(\rho,V) + \frac{1}{3} \int_{\omega_{1/2}^{3/2}} \rho^2 \abs{\nabla_A v}^2,
\end{equation*}
and $\beta \in(0,1)$ is to be chosen later in the proof.

To bound $B_1$, we employ Proposition \ref{final_balls_part_2} to see that
\begin{equation}\label{e_b_8}
\begin{split}
&\frac{1}{2} \int_{V\backslash \omega_{1/2}^{3/2}} \abs{\nabla_A v}^2 + \frac{r^2}{2}\int_V (\curl{A})^2 + \frac{1}{2} \int_V \abs{\nabla \rho}^2 + \frac{1}{2\varepsilon^2}(1-\rho^2)^2 \\
&\ge \frac{1}{2}\int_{V \backslash \omega_{1/2}^{3/2}} \abs{\nabla_{A+G} v}^2 + \pi D \left(\log{\frac{r}{r_{\Omega_{\varepsilon}}(\omega_{1/2}^{3/2})}}  - C \right)  + F_{\varepsilon}(\rho,V). \\
\end{split}
\end{equation}
Then, an application of Lemma \ref{e_rad_bound} shows that
\begin{equation}\label{e_b_8_0}
\begin{split}
\pi D
\left(\log{\frac{r}{r_{\Omega_{\varepsilon}}(\omega_{1/2}^{3/2})}}
- C \right) + F_{\varepsilon}(\rho,V)
&\ge \pi D \left(\log{\frac{r}{C\varepsilon F_{\varepsilon}(\rho,V)}}  - C \right)  + F_{\varepsilon}(\rho,V) \\
&\ge \pi D \left(\log{\frac{r}{\varepsilon D}} - C \right) + F_{\varepsilon}(\rho,V)- \pi D \log{\frac{F_{\varepsilon}(\rho,V)}{\pi D}} \\
&\ge \pi D \left(\log{\frac{r}{\varepsilon D}} - C \right),
\end{split}
\end{equation}
where the last line follows from the inequality $x-a\log{\frac{x}{a}} \ge 0$.  On the set $V \backslash \omega_{1/2}^{3/2}$ it is the case that $1/2 \le \rho \le 3/2$, and so $1 \ge 4 \rho^2/9$.  Hence, from this bound, \eqref{e_b_8}, and \eqref{e_b_8_0}, we may conclude that
\begin{equation}\label{e_b_002}
\begin{split}
B_1 &\ge \frac{1}{4} \left( \frac{1}{2}\int_{V \backslash \omega_{1/2}^{3/2}} \abs{\nabla_{A+G} v}^2 + \pi D \left(\log{\frac{r}{\varepsilon D}} - C \right)      \right) \\
& \ge \frac{1}{18}\int_{V \backslash \omega_{1/2}^{3/2}} \rho^2 \abs{\nabla_{A+G} v}^2 + \frac{\pi D}{4}\left(\log{\frac{r}{\varepsilon D}} - C \right).
\end{split}
\end{equation}

To control $B_2$, we begin by using \eqref{erb_1} and Lemma \ref{haus_bound} to find the bound
\begin{equation}\label{e_b_9}
\begin{split}
\frac{3\beta}{4} F_{\varepsilon}(\rho,V) &\ge \frac{3\sqrt{2}\beta}{8\varepsilon} \int_0^{\infty} \abs{1-t^2} \mathcal{H}^1(\{\rho=t \})dt \\
&\ge \frac{3\sqrt{2}\beta}{4\varepsilon} \int_{\frac{1}{2}}^{1-\delta}(1-t^2)r_{\Omega_{\varepsilon}}(\omega_t)dt.
\end{split}
\end{equation}
Then \eqref{e_b_9} and \eqref{f_b_1} prove that
\begin{equation}\label{e_b_10}
B_2  \ge \int_{\frac{1}{2}}^{1-\delta} \left(2t \pi D \left( \log{\frac{r}{r_{\Omega_{\varepsilon}}(\omega_t)}} -C\right)
+ \frac{3\sqrt{2} \beta}{4\varepsilon} (1-t^2) r_{\Omega_{\varepsilon}}(\omega_t)\right) \;dt.
\end{equation}
As $r_{\Omega_{\varepsilon}}(\omega_t)$ varies, the integrand on the right hand side of \eqref{e_b_10} achieves its minimum at
\begin{equation*}
r_{\Omega_{\varepsilon}}(\omega_t) = \frac{8\pi D t \varepsilon}{3\sqrt{2}\beta(1-t^2)}.
\end{equation*}
Plugging this in, we get the estimate
\begin{equation}\label{e_b_11}
\begin{split}
B_2 &\ge \int_{\frac{1}{2}}^{1-\delta} 2\pi D t\left( \log{\frac{3\sqrt{2} r \beta (1-t^2)}{8 \pi D t \varepsilon}} -C +1 \right)dt \\
&= \int_{\frac{1}{2}}^{1-\delta} 2\pi D t \left( \log{\frac{r}{\varepsilon D}}  + \log{\frac{3\sqrt{2} \beta (1-t^2)}{8 \pi t}} - C \right)dt \\
&= \pi D \left( \left((1-\delta)^2-\frac{1}{4} \right)\log{\frac{r}{\varepsilon D}} -C  \right).
\end{split}
\end{equation}

We now choose $\beta=\frac{23}{27}$ so that $\frac{3(1-\beta)}{8}=\frac{1}{18}$.  Then
\begin{equation}\label{e_b_003}
B_1 + B_3 \ge \frac{1}{18}\int_{V } \abs{\nabla_{A+G} u}^2 + \frac{1}{2\varepsilon^2}(1-\abs{u}^2)^2 +  \frac{\pi D}{4}\left(\log{\frac{r}{\varepsilon D}} - C \right).
\end{equation}
Using \eqref{e_b_11} and \eqref{e_b_003} in \eqref{e_b_001} then shows that
\begin{equation}\label{e_b_12}
\begin{split}
&\frac{1}{2} \int_V \abs{\nabla_A u}^2 + \frac{1}{2\varepsilon^2}(1-\abs{u}^2)^2 + r^2(\curl{A})^2 \\
&\ge  \pi D \left( (1-\delta)^2\log{\frac{r}{\varepsilon D}} -C  \right) + \frac{1}{18}\int_{V } \abs{\nabla_{A+G} u}^2 + \frac{1}{2\varepsilon^2}(1-\abs{u}^2)^2 .
\end{split}
\end{equation}
Now, by assumption $r\le 1 \le D$, so $\log{\frac{r}{D}}\le 0$.  Since $\delta = \varepsilon^{\alpha/4}$, we have that for $\varepsilon \le \varepsilon_0=\varepsilon_0(\alpha)$, the inequalities
\begin{equation}\label{e_b_13}
\begin{split}
& \delta^2 - \delta \le 0 \\
& (2\delta -\delta^2)\log{\varepsilon} \ge -1
\end{split}
\end{equation}
both hold.  Hence, for $\varepsilon \le \varepsilon_0$,
\begin{equation}\label{e_b_14}
\begin{split}
(1-\delta)^2\log{\frac{r}{\varepsilon D}} - C &= \log{\frac{r}{\varepsilon D}}-C + (\delta^2-2\delta)\log{\frac{r}{D}} +(2\delta -\delta^2)\log{\varepsilon} \\
& \ge \log{\frac{r}{\varepsilon D}}-C -1.
\end{split}
\end{equation}
Combining \eqref{e_b_12} with \eqref{e_b_14} gives \eqref{e_b_15}.

\qed

\subsection{Proof of Theorem \ref{norm_switch} and corollaries}
\begin{proof}[Proof of Theorem \ref{norm_switch}]
Theorem \ref{norm_switch}  justifies the  selection of the
function $G$.  It has been chosen so  that $\|G\|_{\lti}$ only
depends on the final data of Theorem \ref{energy_bound}, that is
on natural quantities.
  This
estimate of $\|G\|_{\lti}$, Proposition \ref{g_norm}, is quite
technical and is thus reserved for the next section.  A more thorough
discussion of the space $L^{2,\infty}$, also known as weak-$L^2$,
is also reserved for the next section.

We begin by noting that $\nabla_A u = \nabla_{A+G} u + iGu$.  This and the fact that $\pqnormspace{f}{2}{\infty}{V} \le \pqnormspace{g}{2}{\infty}{V}$ if $\abs{f} \le \abs{g}$ allow us to estimate
\begin{equation}\label{n_s_2}
\begin{split}
\frac{1}{2} \pqnormspace{\nabla_A u}{2}{\infty}{V}^2 &\le \pqnormspace{\nabla_{A+G} u}{2}{\infty}{V}^2 + \pqnormspace{iGu}{2}{\infty}{V}^2 \\
& \le \pnormspace{\nabla_{A+G} u}{2}{V}^2 + \frac{9}{4}\pqnormspace{G}{2}{\infty}{V}^2.
\end{split}
\end{equation}
The second inequality follows since $\abs{u} \le \frac{3}{2}$ on the support of $G$.  Write
\begin{equation*}
 F_{\varepsilon}^r(u,A,V) = \frac{1}{2} \int_V \abs{\nabla_A u}^2 + \frac{1}{2\varepsilon^2}(1-\abs{u}^2)^2 + r^2(\curl{A})^2.
\end{equation*}
We now employ Theorem \ref{energy_bound} to bound
\begin{equation}\label{n_s_3}
\begin{split}
\pnormspace{\nabla_{A+G} u}{2}{V}^2  \le 18\left( F_{\varepsilon}^r(u,A,V) - \pi D \left( \log{\frac{r}{\varepsilon D}} -C  \right) \right).
\end{split}
\end{equation}
We will show in Proposition \ref{g_norm} that
\begin{equation}\label{n_s_4}
\begin{split}
\frac{9}{4}\pqnormspace{G}{2}{\infty}{V}^2 &\le  \frac{216(1+\eta)}{2\eta -1}\left( F_{\varepsilon}^r(u,A,V) - \pi D \left(\log{\frac{r}{\varepsilon D}}-C \right)  \right) \\
&+ \pi \frac{9(1+\eta)}{1-\eta} \sum_{\substack{\bar{B} \in \mathcal{B} \\ \bar{B} \subset \Omega_{\varepsilon}}} d_{\bar{B}}^2.
\end{split}
\end{equation}
Now choose $\eta = \frac{5+\sqrt{2785}}{60}\approx .962$ so that
\begin{equation*}
 18 + \frac{216(1+\eta)}{2\eta -1} = \frac{9(1+\eta)}{1-\eta}.
\end{equation*}
Combining \eqref{n_s_2} -- \eqref{n_s_4} yields \eqref{n_s_1} with constant $C = (1-\eta)/(18(1+\eta)) \approx 1/951$.

\end{proof}

The previous theorem dealt with the energy content of the set $V
\subset \Omega$. We can deduce a slightly stronger version of
Corollary \ref{coro1}.

\begin{cor}\label{norm_switch_domain}
 Assume the hypotheses of Theorem \ref{energy_bound}.  Then
 \begin{equation}
  C\pqnormspace{\nabla_A u}{2}{\infty}{\Omega}^2 \le
   F_\varepsilon^r(u,A,\Omega) - \pi D  \left( \log{\frac{r}{\varepsilon D}}
   -C  \right) +  \pi \sum_{\substack{B \in \mathcal{B} \\
   B \subset \Omega_{\varepsilon}}} d_{B}^2.
 \end{equation}
\end{cor}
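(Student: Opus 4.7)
The plan is to split $\Omega$ into the two disjoint pieces $V$ and $\Omega \setminus V$, estimate $\pqnormspace{\nabla_A u}{2}{\infty}{\cdot}^2$ separately on each piece using the two inequalities already established, and then add the results. The key observation is that the square of the weak-$L^2$ norm is additive on disjoint sets: if $E_1 \cap E_2 = \varnothing$, then for any $t > 0$, $\lambda_{|f|\chi_{E_1 \cup E_2}}(t) = \lambda_{|f|\chi_{E_1}}(t) + \lambda_{|f|\chi_{E_2}}(t)$, so from the definition \eqref{ltinorm2} applied to $f = \nabla_A u$ one gets
\begin{equation*}
\pqnormspace{\nabla_A u}{2}{\infty}{\Omega}^2
\le \pqnormspace{\nabla_A u}{2}{\infty}{V}^2 + \pqnormspace{\nabla_A u}{2}{\infty}{\Omega \setminus V}^2.
\end{equation*}

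For the first term on the right, Theorem \ref{norm_switch} provides the bound
\begin{equation*}
C \pqnormspace{\nabla_A u}{2}{\infty}{V}^2 \le F_\varepsilon^r(u,A,V) + \pi \sum_{B \subset \Omega_\varepsilon} \abs{d_B}^2 - \pi D \left( \log\frac{r}{\varepsilon D} - C \right).
\end{equation*}
For the second term, I would use the continuous embedding $L^2 \hookrightarrow L^{2,\infty}$ (a one-line consequence of Cauchy-Schwarz applied to \eqref{ltinorm1}) to get $\pqnormspace{\nabla_A u}{2}{\infty}{\Omega \setminus V}^2 \le \pnormspace{\nabla_A u}{2}{\Omega \setminus V}^2$. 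Since the summands defining $F_\varepsilon^r$ are individually non-negative, one has $\pnormspace{\nabla_A u}{2}{\Omega \setminus V}^2 \le 2 F_\varepsilon^r(u,A,\Omega \setminus V)$.

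Adding the two estimates and using the additivity $F_\varepsilon^r(u,A,V) + F_\varepsilon^r(u,A,\Omega \setminus V) = F_\varepsilon^r(u,A,\Omega)$ on disjoint pieces of $\Omega$ yields, after absorbing the constant from the trivial estimate into the universal constant from Theorem \ref{norm_switch},
\begin{equation*}
C \pqnormspace{\nabla_A u}{2}{\infty}{\Omega}^2 \le F_\varepsilon^r(u,A,\Omega) - \pi D \left( \log\frac{r}{\varepsilon D} - C \right) + \pi \sum_{\substack{B \in \mathcal{B} \\ B \subset \Omega_\varepsilon}} d_B^2,
\end{equation*}
which is the claim. The only subtle step here is the disjoint-set additivity of $\pqnorm{\cdot}{2}{\infty}^2$; everything else is routine bookkeeping. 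No genuine obstacle is expected, since all the ingredients have been prepared by Theorem \ref{norm_switch} and the elementary properties of $L^{2,\infty}$ recalled in Section \ref{idealv}.
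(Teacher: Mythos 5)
Your proof is correct and follows essentially the same decomposition as the paper: split $\Omega$ into $V$ and $\Omega \setminus V$, invoke Theorem \ref{norm_switch} on $V$, use the trivial embedding $L^2 \hookrightarrow L^{2,\infty}$ together with $\pnorm{\nabla_A u}{2}^2 \le 2F_\varepsilon^r$ on the complement, and recombine. One small wording slip: the squared quasi-norm is \emph{sub}additive over disjoint sets, not additive — what is additive is the distribution function $\lambda_{f\chi_{E_1\cup E_2}}(t)=\lambda_{f\chi_{E_1}}(t)+\lambda_{f\chi_{E_2}}(t)$, and taking the supremum over $t$ yields only the inequality $\le$, which is exactly what you then write and use; the paper justifies the same recombination step more loosely via the triangle inequality and convexity of $x\mapsto x^2$, which costs a factor of $2$ that is absorbed into the universal constant.
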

\begin{proof}
 Add $F_\varepsilon^r(u,A,\Omega \backslash V)$ to both sides of \eqref{n_s_1}.  We then bound
 \begin{equation}
 \begin{split}
  &C\pqnormspace{\nabla_A u}{2}{\infty}{V}^2 + F_\varepsilon^r(u,A,\Omega \backslash V) \\
  & \ge C\pqnormspace{\nabla_A u}{2}{\infty}{V}^2 + \pnormspace{\nabla_A u}{2}{\Omega \backslash V}^2 \\
&\ge C\pqnormspace{\nabla_A u}{2}{\infty}{V}^2 + \pqnormspace{\nabla_A u}{2}{\infty}{\Omega \backslash V}^2 \\
& \ge C\pqnormspace{\nabla_A u}{2}{\infty}{\Omega}^2,
 \end{split}
 \end{equation}
where the last inequality follows by using the convexity of norms, and $C$ is a different constant.  The result follows.
\end{proof}

\begin{proof}[Proof of Proposition \ref{corobbh}]
It is proved in Theorem 0.5 of \cite{bbh} that minimizers of $E_\ep$
with this constraint have  exactly $d$ zeroes  of degree $1$
 which converge to $d$ distinct
points $a_1, \dotsc, a_d$, minimizing $W_g$.
They also prove that their energy is
\begin{equation}\label{nrjbbh}
\min E_\ep= \pi d \lep + \min W_g + d \gamma +o(1),\end{equation}
where $\gamma$ is a universal constant. Let us  apply the
vortex-ball construction to these solutions, choosing for final
radius $r = \frac{1}{4} \min_{i,j} \left( \dist(a_i,\partial \Omega), \abs{a_i - a_j} \right).$
Since the final balls $B \in \mathcal{B}$ cover all
the zeroes of $u$, and there is exactly one zero $b_i^\ep$ with
nonzero degree, converging to each $a_i$, there is  one
ball $B_i$ in the collection containing $b_i^\ep$. Since
$ d_i= \deg (u_\ep, \p B_i)=1, $
 and there are no other zeroes of $u_\ep$, we have  $D=
d$ (with our previous notation) and  Corollary \ref{coro1} (taken
with $A\equiv 0$) gives us
$$E_\ep(u_\ep) + \pi d \ge C \pqnormspace{\nabla u_\varepsilon}{2}{\infty}{\Omega}^2 + \pi d (\lep -
C - \log d),$$ where $C$ is a universal constant. In view of
\eqref{nrjbbh}, this implies that
$$ C \pqnormspace{\nabla u_\varepsilon}{2}{\infty}{\Omega}^2 \le \min W_g +  d \gamma + C d  + \pi d
\log d + o(1), $$ and the first  result follows.

Since $\lti $ is a dual Banach space, we deduce from this bound
that, as $\ep \to 0$, up to extraction, $\nab u_\ep$ converges
weakly-$*$ in $\lti $, to its distributional limit. But it is proved
in \cite{bbh} that $\nab u_\ep \to \nab u_\star $ uniformly away
from $a_1, \cdots, a_d$ (in fact in $C^k_{loc}$), where $u_\star $
is given by $$ u_\star(x)= e^{i H(x)}   \prod_{k=1}^d
\frac{x-a_k}{|x-a_k|}$$ with $H$ a harmonic function. Note in
particular that $u_\star\in W^{1,p}(\om)$ for $p<2$.

We claim that $\nab u_\ep \to \nab u_\star $ in the sense of
distributions on $\om$. Indeed, let $X$ be a smooth compactly
supported test vector field. Fix $\ro>0$ and let us write
$$\io (\nab u_\ep  - \nab u_\star) \cdot X = \int_{\om\backslash
\cup_i B(a_i, \ro)} (\nab u_\ep  - \nab u_\star) \cdot X + \sum_i
\int_{B(a_i,\ro)} (\nab u_\ep  - \nab u_\star) \cdot X. $$ The
first term in the right-hand side tends to $0$ by uniform
convergence of $\nab u_\ep $ to $\nab u_\star$ away from the
$a_i$'s. The second term is bounded by H\"older's inequality by
$C\|X\|_{L^\infty} \|\nab u_\ep- \nab u_\star \|_{L^p(\om)}
\ro^{2/q}$, where $p<2 $ and $1/p+ 1/q=1$. This is bounded by $C
\ro^{2/q}\|X\|_{L^\infty}$ since $\nab u_\star \in L^p(\om)$ for
all $p<2$ and $\nab u_\ep $ is bounded in $L^p(\om)$ for all $p<2$
($\lti(\om)$ embeds in $L^p(\om)$ for all $p<2$). Letting  $\ro $
tend to $0$ we conclude that  $\io (\nab u_\ep- \nab u_\star)\cdot
X\to 0$ and finally that $\nab u_\ep \rightharpoonup \nab u_\star
$ weakly-$*$ in $\lti(\om)$.

\end{proof}

\section{The $L^{2,\infty}$ norm of $G$}\label{l2_inf_of_G}

\subsection{Definitions and preliminary results}\label{l_2_inf_discussion}
We begin with a discussion of the various quantities needed to define and norm the space $L^{2,\infty}$.  For a function $f:\Omega \rightarrow \Rn{k}$, $k\ge 1$, we define the distribution function of $f$ by
\begin{equation}
\lambda_f(t) = \abs{\{x \in \Omega \;|\; \abs{f(x)} > t   \}}.
\end{equation}
This allows us to define the decreasing rearrangement of $f$ as $f^{*}:\Rn{+} \rightarrow \Rn{+}$, where
\begin{equation}\label{dec_re_def}
f^{*}(t) = \inf \{ s>0  \;|\;  \lambda_f(s) \le t \}.
\end{equation}
We then define the quantity
\begin{equation}
\wnorm{f} = \sqrt{\sup_{t>0}t^2\lambda_f(t)} = \sup_{t>0} t
\lambda_f(t)^{\frac{1}{2}} = \sup_{t>0} t^{\frac{1}{2}} f^{*}(t),
\end{equation}
and $L^{2,\infty}(\Omega) = \{f \;|\; \wnorm{f} < \infty \}$.  Unfortunately, this does not define a norm, but rather a quasi-norm.  That is, $\wnorm{\cdot}$ satisfies
\begin{equation*}
 \begin{cases}
  \wnorm{\alpha f} = \abs{\alpha} \wnorm{f} \\
  \wnorm{f} = 0 \text{ if and only if } f=0 \text{ a.e.} \\
  \wnorm{f+g} \le C(\wnorm{f} + \wnorm{g}) \text{ for some } C\ge1.
 \end{cases}
\end{equation*}
It can be shown that with $\wnorm{\cdot}$, $L^{2,\infty}$ is a quasi-Banach space, i.e. a linear space in which every quasi-norm Cauchy sequence converges in the quasi-norm.  However, as the next lemma shows, the space can, in fact, be normed.  We define
\begin{equation}
\begin{split}
  \pqnorm{f}{2}{\infty} & = \sup_{\abs{E}<\infty} \abs{E}^{-1/2} \int_{E} \abs{f(x)}dx \\
                        & = \sup_{t>0}\frac{1}{t^{\frac{1}{2}}} \sup_{\abs{E}=t} \int_E \abs{f(x)}dx\\
                        & = \sup_{t>0}\frac{1}{t^{\frac{1}{2}}} \int_0^t f^{*}(s)ds,
\end{split}
\end{equation}
which is obviously a norm.

\begin{lem}\label{l2_inf_normability}
$L^{2,\infty}$ is a Banach space with norm $\pqnorm{\cdot}{2}{\infty}$, and
\begin{equation}
\wnorm{f} \le \pqnorm{f}{2}{\infty} \le 2 \wnorm{f}.
\end{equation}
\end{lem}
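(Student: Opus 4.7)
The plan is to split the statement into two parts: first the sandwich inequality $\wnorm{f} \le \pqnorm{f}{2}{\infty} \le 2\wnorm{f}$, and then the Banach property, which I will deduce from the quasi-Banach property already asserted for $(L^{2,\infty}, \wnorm{\cdot})$ together with that equivalence. That $\pqnorm{\cdot}{2}{\infty}$ genuinely satisfies the triangle inequality is immediate from its expression as $\sup_{\abs{E}<\infty} \abs{E}^{-1/2}\int_E \abs{f}\,dx$, since subadditivity passes through the integral and then through the supremum.

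For the two inequalities, both reduce to elementary properties of the decreasing rearrangement $f^{*}$. For the lower bound, I would use monotonicity: since $f^{*}$ is nonincreasing, $\int_0^t f^{*}(s)\,ds \ge t\,f^{*}(t)$, so $t^{-1/2}\int_0^t f^{*}(s)\,ds \ge t^{1/2} f^{*}(t)$, and taking the supremum in $t$ gives $\pqnorm{f}{2}{\infty} \ge \wnorm{f}$. For the upper bound, the definition $\wnorm{f} = \sup_{s>0} s^{1/2} f^{*}(s)$ rearranges to the pointwise estimate $f^{*}(s) \le \wnorm{f}\, s^{-1/2}$; integrating on $[0,t]$ gives $\int_0^t f^{*}(s)\,ds \le 2 \wnorm{f}\, t^{1/2}$, and dividing by $t^{1/2}$ and taking the supremum yields $\pqnorm{f}{2}{\infty} \le 2 \wnorm{f}$.

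Once the equivalence is in hand, completeness is automatic. A Cauchy sequence for $\pqnorm{\cdot}{2}{\infty}$ is, by the first inequality, Cauchy for the quasi-norm $\wnorm{\cdot}$, hence converges in $L^{2,\infty}$ to some limit by the quasi-Banach hypothesis; by the second inequality, the convergence then also holds in $\pqnorm{\cdot}{2}{\infty}$. Since $\pqnorm{\cdot}{2}{\infty}$ is an honest norm and the space is complete with respect to it, $(L^{2,\infty}, \pqnorm{\cdot}{2}{\infty})$ is a Banach space. No serious obstacle arises; the only mild subtlety in the background is the identification of the three expressions given for $\pqnorm{f}{2}{\infty}$, which uses the standard rearrangement identity $\sup_{\abs{E}=t} \int_E \abs{f}\,dx = \int_0^t f^{*}(s)\,ds$, the supremum being attained (up to measure zero) on a superlevel set $\{\abs{f} > f^{*}(t)\}$.
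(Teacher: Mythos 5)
Your proof of the two-sided estimate is essentially identical to the paper's: both directions use the monotonicity of $f^{*}$ in the same way (lower bound via $\int_0^t f^{*}\ge t f^{*}(t)$, upper bound via the pointwise estimate $f^{*}(s)\le \wnorm{f}s^{-1/2}$ integrated over $[0,t]$). The remarks you add on the triangle inequality for $\pqnorm{\cdot}{2}{\infty}$ and on deducing completeness from the norm–quasi-norm equivalence are correct and merely make explicit what the paper leaves unstated.
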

\begin{proof}
Since $f^{*}$ is decreasing, we see that
\begin{equation}
\pqnorm{f}{2}{\infty} = \sup_{t>0}\frac{1}{t^{\frac{1}{2}}} \int_0^t f^{*}(s)ds \ge \sup_{t>0} \frac{1}{t^{\frac{1}{2}}} t f^{*}(t) = \sup_{t>0} t^{\frac{1}{2}} f^{*}(t) = \wnorm{f}.
\end{equation}
For the second inequality we note that
\begin{equation}
\frac{1}{t^{\frac{1}{2}}} \int_0^t f^{*}(s)ds = \frac{1}{t^{\frac{1}{2}}} \int_0^t (s^{\frac{1}{2}}f^{*}(s))\frac{ds}{s^{\frac{1}{2}}} \le \wnorm{f} \frac{2t^{\frac{1}{2}}}{t^{\frac{1}{2}}}=2\wnorm{f}.
\end{equation}
This also shows how to construct a function that makes the inequalities sharp: any $f$ so that $f^{*}(s) = \frac{c}{\sqrt{s}}$ will do.  This is the case for $f(x) = 1/\abs{x}$ in $\Rn{2}$.
\end{proof}

We now present the

\begin{proof}[Proof of Proposition \ref{proplb}]
First rewrite the $L^2$ integral using the distribution function:
\begin{equation}
 \int_{\Omega} \abs{f}^2 = \int_0^{\infty} 2t \lambda_f(t) dt.
\end{equation}
We break this integral into two parts and utilize the boundedness
of $f$ and the trivial inequality $\lambda_f(t) \le \abs{\Omega}$
for all $t>0$.  Indeed,
\begin{equation}
\begin{split}
 \int_0^{\infty} 2t \lambda_f(t) dt &= \int_0^C 2t \lambda_f(t) dt + \int_C^{\frac{C}{\varepsilon}} 2t\lambda_f(t) dt\\
 & \le \abs{\Omega} \int_0^C 2t dt + 2 \sup_{t>0}(t^2 \lambda_f(t)) \int_C^{\frac{C}{\varepsilon}} \frac{dt}{t} \\
 & \le \abs{\Omega} C^2  + 2 \pqnormspace{f}{2}{\infty}{\Omega}^2 \log{\frac{C}{C \varepsilon}},
\end{split}
\end{equation}
where we have used Lemma \ref{l2_inf_normability} in the last inequality.  The result follows by dividing both sides by $2
\abs{\log{\varepsilon}}$.
\end{proof}
\subsection{The calculation}
Before proving the main result we prove some quasi-norm estimates for simplified versions of $G$.  The main result breaks $G$ into various simplified components in order to utilize these estimates.

\begin{lem}\label{annuli_exp_estimate}
Suppose we are given a collection of disjoint annuli $\{A_i\}$, $i=1,\dotsc,n$, where
\begin{equation*}
A_i = \{r_i < \abs{x-c_i}\le s_i \} \subset \Rn{2},
\end{equation*}
$c_i$ denotes the center of $A_i$, and $r_i$ and $s_i$ are the inner and outer radii respectively.  Let
\begin{equation}
f(x) = \sum_{i=1}^n \text{\large{$\chi$}}_{A_i}(x) v_i(x) \frac{a_i}{\abs{x-c_i}},
\end{equation}
where $v_i: A_i \rightarrow \Rn{k}$ is a vector field so that $\abs{v_i}=1$ and $a_i$ is a constant for $i=1,\dotsc,n$.
Write $\tau_i = \log{\frac{s_i}{r_i}}$ for the conformal factor of $A_i$.  Then for $t>0$,
\begin{equation}
t^2 \lambda_f(t) \le \pi \sum_{i=1}^n a_i^2 \left( 1-e^{-2\tau_i}
\right).
\end{equation}
\end{lem}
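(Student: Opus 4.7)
The plan is to exploit the fact that the annuli $A_i$ are pairwise disjoint, so that the super-level set $\{|f|>t\}$ decomposes as a disjoint union and
\[
\lambda_f(t) \;=\; \sum_{i=1}^{n} \bigl|\{x\in A_i : |f(x)|>t\}\bigr|.
\]
On $A_i$ we have simply $|f(x)|=|a_i|/|x-c_i|$ since $|v_i|=1$, so $|f(x)|>t$ is equivalent to $|x-c_i|<|a_i|/t$, and the set in question is the intersection of the annulus $\{r_i<|x-c_i|\le s_i\}$ with the disc of radius $|a_i|/t$ around $c_i$. Since this intersection is again an annulus (or the full $A_i$, or empty) centered at $c_i$, its area is elementary to compute.

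Once this reduction is made, I would split into the three natural cases according to the position of $|a_i|/t$ relative to $r_i$ and $s_i$:
\begin{itemize}
\item If $t\ge |a_i|/r_i$, the set is empty and contributes $0$.
\item If $|a_i|/s_i\le t<|a_i|/r_i$, the set is the annulus $\{r_i<|x-c_i|<|a_i|/t\}$ of measure $\pi(a_i^2/t^2-r_i^2)$, so $t^2$ times this measure equals $a_i^2-t^2r_i^2$.
\item If $t<|a_i|/s_i$, the set is all of $A_i$, of measure $\pi(s_i^2-r_i^2)$, so $t^2$ times the measure equals $t^2(s_i^2-r_i^2)$.
\end{itemize}
In each case I verify that the quantity is bounded by $a_i^2(1-r_i^2/s_i^2)=a_i^2(1-e^{-2\tau_i})$: in case (ii), using $t\ge|a_i|/s_i$ one has $a_i^2-t^2r_i^2\le a_i^2-a_i^2r_i^2/s_i^2$; in case (iii), using $t<|a_i|/s_i$, $t^2(s_i^2-r_i^2)<(a_i^2/s_i^2)(s_i^2-r_i^2)$. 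In fact the common value at the boundary $t=|a_i|/s_i$ is exactly $a_i^2(1-e^{-2\tau_i})$, which makes clear that this bound is sharp on each annulus.

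Summing the pointwise (in $t$) bound over $i$ then yields
\[
t^2\lambda_f(t)\;=\;\pi\sum_{i=1}^n t^2\bigl|\{|f|>t\}\cap A_i\bigr|/\pi \;\le\;\pi\sum_{i=1}^n a_i^2\bigl(1-e^{-2\tau_i}\bigr),
\]
which is the claim. There is no real obstacle here beyond bookkeeping; the proof is a direct calculation that relies only on the disjointness of the annuli and the explicit radial form of $f$ on each one. The slightly delicate point worth recording is that the bound on each annulus is worst when $t=|a_i|/s_i$, i.e.\ exactly at the outer edge of the super-level set, since this is the threshold at which the entire annulus just starts to be contained in $\{|f|>t\}$.
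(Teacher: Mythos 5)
Your proof is correct and follows essentially the same route as the paper: decompose $\lambda_f(t)$ over the disjoint annuli, compute the measure of $A_i\cap\{|f|>t\}$ explicitly in the three cases, bound each contribution by $\pi a_i^2(1-r_i^2/s_i^2)$, and sum. The only blemish is the extraneous ``$/\pi$'' in your final display, which is clearly a slip and does not affect the argument.
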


\begin{proof}
We begin by noting that on the annulus $A_i$ it is the case that
\begin{equation}
\frac{\abs{a_i}}{s_i} \le \abs{f} < \frac{\abs{a_i}}{r_i}.
\end{equation}
Then for any $t>0$ and any annulus $A_i$, the measure of the set in $A_i$ where $f>t$ is simple to calculate.  Indeed, if $t\le \abs{a_i}/s_i$, then $f>t$ on the whole annulus, which has measure $\pi(s_i^2 - r_i^2)$.  If $t\ge \abs{a_i}/r_i$, then $f<t$ everywhere on the annulus, and so the measure is zero.  Finally, if $\abs{a_i}/s_i < t < \abs{a_i}/r_i$, then $f>t$ exactly on the subannulus $\{ r_i < \abs{x-c_i} \le \rho_i  \}$, where
\begin{equation}
 \rho_i = \frac{\abs{a_i}}{t},
\end{equation}
which has measure $\pi (a_i^2/t^2 - r_i^2 )$.

Combining these, for any $t>0$ we may then write
\begin{equation}
\lambda_f(t) = \sum_{\{i\;\vert\;\frac{\abs{a_i}}{s_i} <t <
\frac{\abs{a_i}}{r_i} \}} \pi \left(\frac{a_i^2}{t^2}-r_i^2
\right)    + \sum_{\{i\;|\; t \le \frac{\abs{a_i}}{s_i}\}} \pi
(s_i^2-r_i^2).
\end{equation}
Then
\begin{equation}
\begin{split}
  t^2\lambda_f(t) & =  \sum_{\{i\;\vert\;\frac{\abs{a_i}}{s_i} <t < \frac{\abs{a_i}}{r_i} \}} \pi (a_i^2-t^2r_i^2) +   \sum_{\{i\;|\; t \le \frac{\abs{a_i}}{s_i}\}} \pi (s_i^2-r_i^2)t^2 \\
&\le \sum_{\{i\;|\;\frac{\abs{a_i}}{s_i} <t < \frac{\abs{a_i}}{r_i} \}} \pi a_i^2 \left(1 - \frac{r_i^2}{s_i^2} \right) + \sum_{\{i\;|\; t \le \frac{\abs{a_i}}{s_i}\}} \pi a_i^2 \left(1-\frac{r_i^2}{s_i^2}\right) \\
&\le \sum_{i=1}^n \pi a_i^2 \left(1-\frac{r_i^2}{s_i^2}\right).
\end{split}
\end{equation}
Plugging in $\tau_i = \log{\frac{s_i}{r_i}}$ proves the result.

\end{proof}

The next lemma tells us that a collection of annuli with uniformly bounded degrees and the property that they can be rearranged to fit concentrically inside each other can, for the purposes of estimating the $L^{2,\infty}$ quasi-norm, be regarded as a single annulus.

\begin{lem}\label{zero_merging_grouping}
Suppose $\{A_i\}$, $i=1,\dotsc,n$, is a collection of disjoint annuli, where
\begin{equation*}
A_i = \{r_i < \abs{x-c_i}\le s_i \} \subset \Rn{2},
\end{equation*}
$c_i$ denotes the center of $A_i$, and $r_i$ and $s_i$ are the inner and outer radii respectively.  Suppose further that the annuli can be arranged concentrically without overlap.  That is, suppose that
\begin{equation*}
r_1 < s_1 \le r_2 < s_2 \le r_3 \le \dotsb < s_{n-1} \le r_n < s_n.
\end{equation*}
Let
\begin{equation}
f(x) = \sum_{i=1}^n \text{\large{$\chi$}}_{A_i}(x) v_i(x) \frac{a_i}{\abs{x-c_i}},
\end{equation}
where the $a_i$ are constants such that $\abs{a_i}\le \abs{a}$ and $v_i: A_i \rightarrow \Rn{k}$ is a vector field so that $\abs{v_i}=1$ for $i=1,\dotsc,n$.
Then
\begin{equation}
t^2 \lambda_f(t) = t^2 \sum_{i=1}^n \abs{A_i \cap \{\abs{f}>t \} }
\le \pi a^2.
\end{equation}
\end{lem}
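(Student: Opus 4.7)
The plan is to exploit the radial structure of $f$ on each annulus. On $A_i$, the value $\abs{f(x)} = \abs{a_i}/\abs{x-c_i}$ depends only on $\abs{x-c_i}$, so the measure $\abs{A_i \cap \{\abs{f}>t\}}$ is a function of $r_i$, $s_i$, $\abs{a_i}$, and $t$ alone, independent of the center $c_i$. Thus by translation invariance of Lebesgue measure I may assume, for the purposes of computing the claimed sum, that all $c_i$ coincide with the origin so that $A_i = \{r_i < \abs{x} \leq s_i\}$. The non-overlap hypothesis $r_1 < s_1 \leq r_2 < s_2 \leq \cdots \leq r_n < s_n$ then gives disjoint nested annuli contained in the single large annulus $\tilde{A} := \{r_1 < \abs{x} \leq s_n\}$.

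In this concentric model, the uniform bound $\abs{a_i} \leq \abs{a}$ yields the pointwise domination $\abs{f(x)} \leq \abs{a}/\abs{x}$ on each $A_i$, which in turn gives the inclusion
\begin{equation*}
\{x \in A_i \,:\, \abs{f(x)} > t\} \subset A_i \cap \{\abs{x} < \abs{a}/t\}.
\end{equation*}
Since the $A_i$ are disjoint with union contained in $\tilde{A}$, summing over $i$ yields
\begin{equation*}
t^2 \sum_i \abs{A_i \cap \{\abs{f}>t\}} \leq t^2 \abs{\tilde{A} \cap \{\abs{x} < \abs{a}/t\}} = \pi t^2 \left[\min(s_n, \abs{a}/t)^2 - r_1^2\right]_+.
\end{equation*}

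The proof concludes with a three-case analysis on $t$. If $t \geq \abs{a}/r_1$, the right-hand side vanishes. If $\abs{a}/s_n \leq t < \abs{a}/r_1$, then $\min(s_n, \abs{a}/t) = \abs{a}/t$, and the right-hand side equals $\pi(\abs{a}^2 - t^2 r_1^2) \leq \pi \abs{a}^2$. Finally, if $t < \abs{a}/s_n$, then $\min(s_n, \abs{a}/t) = s_n$ and the strict inequality $t^2 < \abs{a}^2/s_n^2$ gives $\pi t^2 (s_n^2 - r_1^2) \leq \pi \abs{a}^2 (1 - r_1^2/s_n^2) \leq \pi \abs{a}^2$. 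I do not anticipate any genuine obstacle: the argument is essentially a one-line comparison once the reduction to concentric annuli has been made, and that reduction is itself immediate from the radial form of $\abs{f}$ on each $A_i$.
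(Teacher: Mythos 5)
Your proof is correct and takes essentially the same approach as the paper: translate to the concentric case via translation invariance, then dominate $\abs{f(x)}$ pointwise by $\abs{a}/\abs{x}$. The paper finishes in one line by comparing to $g(x) = a e_1/\abs{x-c}$ and using $\lambda_g(t) = \pi a^2/t^2$, whereas you recover the same bound through an explicit three-case analysis on $t$ after intersecting with the enclosing annulus $\tilde{A}$; this is a cosmetic difference, not a different route.
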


\begin{proof}
Since the distribution function is invariant under translations, without loss of generality we may assume that the annuli are concentric with common center $c$.  This reduces $f$ to the form
\begin{equation}
 f(x) = \sum_{i=1}^n \text{\large{$\chi$}}_{A_i}(x) v_i(x) \frac{a_i}{\abs{x-c}}.
\end{equation}

Consider the function
\begin{equation}
g(x) = \frac{a e_1}{\abs{x-c}},
\end{equation}
where $e_1 = (1,0,\dotsc,0)\in \Rn{k}$. The pointwise bound
$\abs{f(x)} \le \abs{g(x)}$ yields the bound $\lambda_{f}(t) \le
 \lambda_g(t)$ for all $t>0$.  It is a simple matter to see that
\begin{equation}
 \lambda_g(t) = \pi \frac{a^2}{t^2},
\end{equation}
and hence,
\begin{equation}
t^2 \lambda_f(t) \le t^2 \lambda_g(t) = \pi a^2.
\end{equation}
\end{proof}

We are now ready to prove the main result of this section.

\begin{prop}\label{g_norm}
Let $G: \Omega \rightarrow \Rn{2}$ be the function defined in Proposition \ref{final_balls} with $\eta\in(0,1)$ fixed and the $\beta_B$ values given by \eqref{f_b_15}.   Write
\begin{equation*}
 F_{\varepsilon}^r(u,A,V) = \frac{1}{2} \int_V \abs{\nabla_A u}^2 + \frac{1}{2\varepsilon^2}(1-\abs{u}^2)^2 + r^2(\curl{A})^2.
\end{equation*}
Then
\begin{equation}\label{g_n_30}
\begin{split}
\pqnormspace{G}{2}{\infty}{V}^2 &\le  \frac{96(1+\eta)}{2\eta -1}\left( F_{\varepsilon}^r(u,A,V) - \pi D \left(\log{\frac{r}{\varepsilon D}}-C \right)  \right) \\
&+ \pi \frac{4(1+\eta)}{1-\eta} \sum_{\substack{\bar{B} \in \mathcal{B} \\ \bar{B} \subset \Omega_{\varepsilon}}} d_{\bar{B}}^2.
\end{split}
\end{equation}
\end{prop}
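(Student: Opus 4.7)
The plan is to decompose $G$ according to the ball-construction tree and estimate each piece with the quasi-norm tools of Lemmas \ref{annuli_exp_estimate} and \ref{zero_merging_grouping}. Since $G$ is supported in $\bigcup_{\bar B\subset\Omega_\varepsilon}\bar B$ and the final balls in $\mathcal{B}$ are pairwise disjoint, the distribution functions add: $\lambda_G(t)=\sum_{\bar B}\lambda_{G|_{\bar B}}(t)$, so $\wnorm{G}^2\le\sum_{\bar B}\wnorm{G|_{\bar B}}^2$. By Lemma \ref{l2_inf_normability} (at a cost of a factor $4$) it then suffices to bound each $\wnorm{G|_{\bar B}}^2$ by a local contribution of the energy excess together with $d_{\bar B}^2$, and sum. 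Inside a fixed $\bar B$ the growth history carves out a collection of disjoint annular slices --- one per (ball, phase) pair in the merging tree --- on each of which $|G(x)|=|d_B\beta_B|/|x-a_B|$, exactly the profile handled by Lemma \ref{annuli_exp_estimate}.

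For each such $\bar B$ I would split $G|_{\bar B}=G_1+G_2$ according to whether the underlying slice lies before or after the transition time $t_{\bar B}$ of Corollary \ref{degree_analysis_firstgen}; since the supports are disjoint, $\wnorm{G|_{\bar B}}^2\le\wnorm{G_1}^2+\wnorm{G_2}^2$. For $G_1$ (where $\beta_B\equiv 1$), Lemma \ref{annuli_exp_estimate} combined with $1-e^{-2\tau_B}\le 2\tau_B$ gives $\wnorm{G_1}^2\le 2\pi\int_0^{t_{\bar B}}\sum_{B\in\bar B\cap\mathcal{D}(t)}d_B^2\,dt$. The pre-transition imbalance $N(t)>\eta P(t)$ from Lemma \ref{degree_analysis}, together with $d_{\bar B}=P-N$, forces $\sum_B d_B^2\ge P+N\ge\tfrac{1+\eta}{1-\eta}|d_{\bar B}|$, i.e.\ $\sum_B d_B^2$ exceeds $|d_{\bar B}|$ by a definite fraction. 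Since the pre-transition integrand in \eqref{f_b_2} reduces to $(1-r(\mathcal{D}(t))/(2\lambda(t)))\sum_B d_B^2$, the slack between this integral and the optimized $\pi D(s+\sigma-1)\ge \pi D(\log(r/\varepsilon D)-C)$ captures a positive multiple of $\int(\sum_B d_B^2-|d_{\bar B}|)\,dt$ and hence of $\int\sum_B d_B^2\,dt$ itself; the resulting bound controls $\wnorm{G_1}^2$ by a multiple of the energy excess, with the combination of the two $\eta$-factors producing the $(1+\eta)/(2\eta-1)$ constant.

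For $G_2$ the key structural fact is the normalization $\sum_{B\in\bar B\cap\mathcal{D}(t)}d_B^2\beta_B^2=|d_{\bar B}|$ in every post-transition phase, which in particular bounds each slice coefficient $c_B^2=d_B^2\beta_B^2\le|d_{\bar B}|$. I would apply Lemma \ref{zero_merging_grouping} to the ``lineages'' --- maximal chains of slices that share a common center, i.e.\ consecutive phases of a ball that does not merge --- which are spatially disjoint because each merged ball's slice lies strictly outside the union of its ancestors' balls. This replaces the wasteful $\sum_B c_B^2$ inside a lineage by $(\max_B|c_B|)^2\le|d_{\bar B}|$. The post-transition inequality $N(t)\le\eta P(t)$ from Corollary \ref{degree_analysis_firstgen} limits how much squared-degree mass can ``split away'' from $d_{\bar B}$ and so bounds the number of active lineages, yielding $\wnorm{G_2}^2\lesssim\tfrac{1+\eta}{1-\eta}d_{\bar B}^2$ plus a residual term absorbed into the energy excess. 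Summing over $\bar B\subset\Omega_\varepsilon$ and invoking Lemma \ref{l2_inf_normability} to pass from $\wnorm{\cdot}$ to $\pqnormspace{\cdot}{2}{\infty}{V}$ produces \eqref{g_n_30}.

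The main obstacle is precisely the post-transition estimate for $G_2$: a slice-by-slice application of Lemma \ref{annuli_exp_estimate} only gives $|d_{\bar B}|\cdot(\#\text{phases})$, which can vastly exceed $d_{\bar B}^2$ when the tree is bushy. Only the lineage grouping via Lemma \ref{zero_merging_grouping}, paired with the degree-counting of Lemma \ref{degree_analysis}, produces a bound of the correct order $d_{\bar B}^2$. The two independent roles of $\eta$ --- controlling the pre-transition integrand to produce the $(2\eta-1)$ denominator, and controlling the post-transition lineage count to produce the $(1-\eta)$ denominator --- explain why the result requires $\eta\in(1/2,1)$, matching the constants displayed in \eqref{g_n_30}.
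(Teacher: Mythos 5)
Your decomposition strategy (by final ball, then by transition time, pre-transition via the energy excess, post-transition via the degree structure) matches the paper's, and you correctly identify Lemmas \ref{annuli_exp_estimate}, \ref{zero_merging_grouping}, \ref{degree_analysis} as the key tools. However, your post-transition estimate for $G_2$ has a genuine gap. You define a ``lineage'' as a maximal chain of consecutive non-merging phases of a ball (slices sharing a common center), and claim the post-transition $\eta$-inequality bounds the number of lineages by $\tfrac{1+\eta}{1-\eta}|d_{\bar B}|$. This is not true: with that definition a new lineage starts at \emph{every} merging event experienced by a nonzero-degree ball, including \emph{zero-mergings} --- mergings in which a nonzero-degree ball absorbs only zero-degree balls. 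Zero-mergings do not decrease the number of nonzero-degree balls and can happen arbitrarily many times, so your lineage count is not bounded by any function of $|d_{\bar B}|$; the inequality $N \le \eta P$ controls only the number of \emph{effective} mergings (those that strictly decrease the nonzero-degree ball count). The missing idea is that Lemma \ref{zero_merging_grouping} does not require concentricity, only nested radius intervals, and exactly this holds across a zero-merging: the post-merge slice lies strictly outside the pre-merge ball, and both the degree and the $\beta$ value of the surviving nonzero-degree ball are unchanged (the multiset of nonzero degrees in $\bar{B}\cap\mathcal{D}(t)$ is unchanged, so the normalization $\sum_B d_B^2\beta_B^2 = |d_{\bar B}|$ fixes the same $\beta$). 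The correct grouping is each effective-merging generation together with the run of zero-merging generations that follows it; the number of such groups is the number of effective generations, which is at most the number of nonzero-degree balls at the transition generation, and that is bounded by $P+N \le (1+\eta)P \le \tfrac{1+\eta}{1-\eta}|d_{\bar B}|$. Your definition as written prevents this regrouping, and without it the $d_{\bar B}^2$ bound fails.

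A secondary imprecision is in the pre-transition term. You assert $\sum_B d_B^2 \ge \tfrac{1+\eta}{1-\eta}|d_{\bar B}|$ and appeal to ``the slack'' between the early-generation integral and the optimized ball-construction lower bound, but you do not explain how to localize that slack to the early generations: the lower bound $\pi D\log(r/\varepsilon D)$ also consumes late-generation energy, so you cannot subtract it from the early piece alone. The paper splits the deficit $\tfrac{1}{2}\int|\nabla_A v|^2 + \tfrac{r^2}{2}\int(\curl{A})^2 - \pi D\sum_k\tau_k$ into early and late parts, neutralizes the late part with Corollary \ref{low_growth_old}, and on the early part uses the local $\tfrac{2}{3}$-lower bound of Lemma \ref{alt_low_growth_old} together with the weaker inequality $\sum_B d_B^2 \ge (1+\eta)|D_n|$ to get the coefficient $\tfrac{2}{3} - \tfrac{1}{1+\eta} = \tfrac{2\eta-1}{3(1+\eta)}$. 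It is the $\tfrac{2}{3}$ of Lemma \ref{alt_low_growth_old}, not your sharper degree inequality, that produces the $(2\eta-1)$ denominator; this splitting and the use of Lemma \ref{alt_low_growth_old} need to appear explicitly.
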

\begin{proof}
Step 1

To begin we must translate the notation used to define $G$ into different notation that is more cumbersome but that will allow a more exact enumeration of the objects generated by the ball construction.  Recall that to define $G$, the collection $\{\mathcal{D}(t)\}_{t\in[0,s+\sigma]}$ defined by \eqref{f_b_14} is refined to the subcollection $\{\mathcal{G}(t)\}_{t\in[0,s+\sigma]}$ that consists of all balls that stay entirely inside $\Omega_{\varepsilon}$.  Let $N$ be the number of balls in $\mathcal{G}(s+\sigma)= \{\bar{B}_1,\dotsc,\bar{B}_N\}$, i.e. the number of final balls.  Let $T$ be the finite set of merging times in the growth of $\mathcal{G}(t)$, where here we count $t=\sigma$, the time when the collection shifts from $\mathcal{C}(\sigma)$ to $\mathcal{B}(0)$, as a merging time.  Let $0=t_0 <t_1 < \dotsb < t_{K-1} < t_K= s+\sigma$ be an enumeration of $T \cup \{0,s+\sigma\}$.  For $k=1,\dotsc,K$ and $t\in[t_{k-1},t_k)$ we call all balls in $\mathcal{G}(t)$ members of the $k^{th}$ generation.  We write $\mathcal{G}(t_k^-)$ for the collection of balls obtained as $t\rightarrow t_k^-$, i.e. the collection of pre-merged balls at time $t=t_k$. Similarly, when we write $\mathcal{G}(t_k)$ we refer to the post-merged balls.  For $k=1,\dotsc,K$ and $n=1,\dotsc,N$ we enumerate
\begin{equation*}
\begin{split}
&\{B_{i,k,n} \}_{i=1}^{M_{k,n}} = \{ B \in \mathcal{G}(t_k^-) \;|\; B \subset \bar{B}_n \}, \text{ and } \\
&\{\tilde{B}_{i,k,n} \}_{i=1}^{M_{k,n}} = \{ B \in \mathcal{G}(t_{k-1}) \;|\; B \subset \bar{B}_n \},
\end{split}
\end{equation*}
in such a way that $\tilde{B}_{i,k,n} \subset B_{i,k,n}$.  We define the annuli $A_{i,k,n} = B_{i,k,n} \backslash \tilde{B}_{i,k,n}$ and write $d_{i,k,n} = \deg(u,\partial B_{i,k,n})$ for the degree of $u$ in the annulus $A_{i,k,n}$.  For fixed $k=1,\dotsc,K$ we say the annuli $\{A_{i,k,n} \}$ are $k^{th}$ generation annuli.  Without loss of generality we may assume that the indices are ordered so that $\abs{d_{i,k,n}}$ is a decreasing sequence with respect to $i$ for $k$ and $n$ fixed.  Write $D_n = d_{\bar{B}_n}$.  We define the conformal growth factor in the $k^{th}$ generation, denoted $\tau_k$, by
\begin{equation*}
\tau_k = \log{\frac{r(\mathcal{G}(t_k^-))}{r(\mathcal{G}(t_{k-1}))}}.
\end{equation*}

Recall that for each $\bar{B}_n$, $n=1,\dotsc,N$, Corollary \ref{degree_analysis_firstgen} provides a transition time $t_{\bar{B}_n}$ (depending on $\eta$).  In the current setting, the more natural notion is that of transition generation, and in fact, the proof of Lemma \ref{degree_analysis} shows that the transition time actually occurs at one of the $t_k$ for $k=0,\dotsc,K-1$.  We then define the transition generation $k_n$ as the unique $k$ such that $t_{\bar{B}_n} \in [t_{k-1}, t_k)$.  If we define generational versions of the negative and positive vorticity masses $N(t)$ and $P(t)$ from \eqref{d_a_2} by
\begin{equation*}
\begin{split}
& N(k,n):= \sum_{\substack{1 \le i \le M_{k,n} \\ d_{i,k,n}< 0}} \abs{d_{i,k,n}}  \\
& P(k,n):= \sum_{\substack{1 \le i \le M_{k,n} \\ d_{i,k,n}\ge0}} d_{i,k,n},
\end{split}
\end{equation*}
then the definition of $k_n$ and Corollary \ref{degree_analysis_firstgen} allow us to conclude
\begin{equation}\label{g_n_9}
D_n \ge 0 \Rightarrow \begin{cases}
            \eta P(k,n) <  N(k,n) & \text{for }  1 \le k \le k_n-1\\
            N(k,n) \le \eta P(k,n) & \text{for } k_n \le k \le  K
                      \end{cases}
\end{equation}
\begin{equation}\label{g_n_10}
D_n < 0 \Rightarrow \begin{cases}
            \eta N(k,n) <  P(k,n) &\text{for } 1 \le k \le k_n-1\\
            P(k,n) \le \eta N(k,n) &\text{for } k_n \le k \le  K.
                      \end{cases}
\end{equation}

Translating the definition of the $\beta_B$ from \eqref{f_b_15} into the new notation, we see that
\begin{equation}\label{g_n_10_0}
 \beta_{i,k,n} = \begin{cases}
                 1 & \text{for } 1\le k < k_n,\; 1\le i \le M_{k,n} \\
                 \abs{D_n}^{1/2}\left(\sum\limits_{i=1}^{M_{k,n}}d_{i,k,n}^2 \right)^{-1/2}  &
               \text{for }           k_n \le k \le K,\; 1\le i \le M_{k,n}.
              \end{cases}
\end{equation}
This means that $G$ can be written
\begin{equation}
  G(x) = \sum_{n=1}^{N} \sum_{k=1}^K \sum_{i=1}^{M_{k,n}} \text{\large{$\chi$}}_{A_{i,k,n}} (x)
         \frac{d_{i,k,n} \beta_{i,k,n}}{\abs{x-c_{i,k,n}}} \tau_{i,k,n}(x),
\end{equation}
where $\tau_{i,k,n}$ is the unit tangent vector field in $A_{i,k,n}$.   In order to somewhat ease the notational burden, we define the following sets of indices.  The early and later generations are given respectively by
\begin{equation*}
 \begin{split}
 &S_e = \{ (n,k) \;|\; 1 \le n \le N, 1 \le k \le k_n -1  \} \\
 &S_l = \{ (n,k) \;|\; 1 \le n \le N, k_n \le k \le K \},
 \end{split}
\end{equation*}
and we similarly define the sets of early and later annuli by
\begin{equation*}
 \begin{split}
 &T_e = \{ (n,k,i) \;|\;  (n,k) \in S_e, 1\le i \le M_{k,n} \} \\
 &T_l = \{ (n,k,i) \;|\;   (n,k) \in S_l, 1\le i \le M_{k,n}  \}.
 \end{split}
\end{equation*}\\\\
Step 2.

In this step we will prove an intermediate bound on $t^2
\lambda_G(t)$.  We begin by breaking the distribution function for
$G$ up into two components determined by the value of $k_n$.
Indeed,
\begin{equation}\label{g_n_2}
\begin{split}
\lambda_G(t) &  = \sum_{n,k,i} \abs{A_{i,k,n}\cap \{\abs{G}>t\}} \\
       & = \sum_{T_e}\abs{A_{i,k,n}\cap\{\abs{G}>t\}} + \sum_{T_l} \abs{A_{i,k,n}\cap\{\abs{G}>t\}}\\
       & := A_1 +A_2.
\end{split}
\end{equation}
Applying Lemma \ref{annuli_exp_estimate} to $A_1$, we see that
\begin{equation}\label{g_n_3}
t^2 A_1 \le \pi \sum_{T_e} d_{i,k,n}^2 (1-e^{-2\tau_k}).
\end{equation}

To analyze the $A_2$ term we must take advantage of all of the notation created in the first step.  Particular attention must be paid to the generations after $k_n$ that come about as the result of mergings in which balls of nonzero degree are merged only with balls of zero degree.  These generations, which we call zero-merging generations, throw off a counting argument that we will use to bound the number of later generations (after $k_n$) in terms of the degrees of the balls in the $k_n^{th}$ generation.  Generations that are not zero-merging generations we call effective-merging generations.  The degrees of the annuli are not changed in a zero-merging generation, and the annuli of such a generation can be rearranged to fit concentrically outside the annuli of the previous generation.  Our strategy for dealing with zero-merging generations, then, is to collect successive zero-merging generations, group them with the preceding effective-merging generation, and utilize Lemma \ref{zero_merging_grouping} to regard the group as a single collection of annuli.

To this end, for each $n$ we define the sets
\begin{equation*}
\begin{split}
Z_n = \{  k \in \{k_n,\dotsc,K \} \;|\;& \text{each ball in } \mathcal{G}(t_k) \text{ contains at most one ball in}\\
& \mathcal{G}(t_k^-) \text{ of nonzero degree} \},
\end{split}
\end{equation*}
and
\begin{equation*}
I_n = \{k_n,\dotsc,K \} \backslash Z_n.
\end{equation*}
The generations in $Z_n$ are the zero-merging generations, and those in $I_n$ are the effective-merging generations.

Since $\abs{d_{i,k,n}}$ is a decreasing sequence with respect to $i$ for $k,n$ fixed, there must exist an integer $P_{k,n} \in \{1, \dotsc, M_{k,n} \}$ so that $d_{i,k,n} \neq 0$ for $i=1,\dotsc,P_{k,n}$ and $d_{i,k,n}=0$ for $i = P_{k,n} +1, \dotsc,M_{k,n}$.  Since the annuli of a zero-merging generation have the same degrees as the previous generation, we have that $P_{k,n} = P_{k-1,n}$.  We may assume, without loss of generality, that the ball ordering is such that $B_{i,k-1,n} \subset B_{i,k,n}$ and $d_{i,k,n} = d_{i,k-1,n}$ for $k\in Z_n$ and $i=1,\dotsc,P_{k,n}$.  To identify sequences of zero-merging generations that happen one after the other we write $Z_n = Z_n^1\cup \dotsb \cup Z_n^{m_n}$, where the $Z_n^j$ are maximal subsets of sequential integers, i.e. the integer connected components of $Z_n$.  All of the generations in $Z_n^j$ will be grouped with the generation preceding $Z_n^j$ and analyzed as a single entity with Lemma \ref{zero_merging_grouping}.  This preceding effective generation occurs at generation $l_n^j:=\min(Z_N^j)-1$.  We group it together with the generations in $Z_n^j$ by forming the collections $\tilde{Z}_n^j = Z_n^j \cup \{ l_n^j \}$.  Write the modified collection $\tilde{Z}_n = \tilde{Z}_n^1 \cup \dotsb \cup \tilde{Z}_n^{m_n}$, and $\tilde{I}_n = I_n \backslash \tilde{Z}_n$.  Note that $P_{k,n}$ is constant for $k\in \tilde{Z}_n^{j}$; we call this common value $P_n^j$.

We now split $A_2$ again:
\begin{equation}\label{g_n_4}
\begin{split}
A_2 &= \sum_{T_l} \abs{A_{i,k,n}\cap\{\abs{G}>t\}} \\
& = \sum_{n=1}^N\sum_{k\in \tilde{I}_n}\sum_{i=1}^{P_{k,n}} \abs{A_{i,k,n}\cap\{\abs{G}>t\}} + \sum_{n=1}^N \sum_{k\in      \tilde{Z}_n} \sum_{i=1}^{P_{k,n}} \abs{A_{i,k,n}\cap\{\abs{G}>t\}} \\
&:= B_1 + B_2.
\end{split}
\end{equation}
Applying Lemma \ref{annuli_exp_estimate} to $B_1$, we get
\begin{equation}\label{g_n_5}
t^2 B_1 \le \pi \sum_{n=1}^N \sum_{k\in \tilde{I}_n}\sum_{i=1}^{P_{k,n}}  (d_{i,k,n} \beta_{i,k,n})^2 (1-e^{-2\tau_k}) \le \pi \sum_{n=1}^N \sum_{k\in \tilde{I}_n}\sum_{i=1}^{P_{k,n}}  (d_{i,k,n} \beta_{i,k,n})^2.
\end{equation}
Upon inserting the values of $\beta_{i,k,n}$ from \eqref{g_n_10_0}, we find that
\begin{equation}\label{g_n_5_0}
 t^2 B_1 \le \pi \sum_{n=1}^N \sum_{k\in \tilde{I}_n} \abs{D_n} = \pi \sum_{n=1}^N \#(\tilde{I}_n) \abs{D_n},
\end{equation}
where $\#(\tilde{I}_n)$ denotes the cardinality of $\tilde{I}_n$.

To handle the $B_2$ term we note that
\begin{equation}
\begin{split}
 & \{ (n,k,i) \;|\; 1\le n\le N, k \in \tilde{Z}_n, 1\le i \le P_{k,n} \} \\ &=
 \bigcup_{\substack{1\le n \le N \\ 1 \le j \le m_n}} \{(n,k,i) \;|\; 1\le i \le P^j_n, k\in \tilde{Z}_n^j  \},
\end{split}
\end{equation}
and hence
\begin{equation}\label{g_n_6}
B_2 = \sum_{n=1}^N \sum_{j=1}^{m_n} \sum_{i=1}^{P_n^j} \sum_{k\in \tilde{Z}_n^j} \abs{A_{i,k,n}\cap\{\abs{G}>t\}}.
\end{equation}

When a zero-merging happens to a ball $B$ of nonzero degree, it is merged with a number of balls of zero degree.  The resulting ball has the same degree as $B$, and its radius is strictly larger than the radius of $B$.  Thus, we see that the radii hypothesis of Lemma \ref{zero_merging_grouping} is satisfied by $\{A_{i,k,n} \}$ for $k\in\tilde{Z}_n^j$, $i=1,\dotsc,P_{k,n}$.  Moreover, for $k\in\tilde{Z}_n^j$, we have that $d_{i,k,n} = d_{i,l_n^j,n}$ and $\beta_{i,k,n} = \beta_{i,l_n^j,n}$.  All hypotheses of Lemma \ref{zero_merging_grouping} are thus satisfied; applying it, for each $j,n$ we may bound
\begin{equation}\label{g_n_7}
t^2 \sum_{k\in \tilde{Z}_n^j} \abs{A_{i,k,n}\cap\{\abs{G}>t\}} \le  \pi (d_{i,l_n^j,n} \beta_{i,l_n^j,n})^2.
\end{equation}
Plugging in the values of $\beta_{i,k,n}$ from \eqref{g_n_10_0} then shows that
\begin{equation}\label{g_n_7_0}
 t^2 B_2 \le \sum_{n=1}^N \sum_{j=1}^{m_n} \pi \abs{D_n} = \sum_{n=1}^N \pi m_n \abs{D_n}.
\end{equation}

Recall that $I_n = \tilde{I}_n \cup \{l_n^1,\dotsc,l_n^{m_n}  \}$.  Hence $\#(I_n) = \#(\tilde{I}_n) + m_n$.  We then combine \eqref{g_n_4}, \eqref{g_n_5_0}, and \eqref{g_n_7_0} to get the estimate
\begin{equation}\label{g_n_8}
t^2 A_2 \le \pi \sum_{n=1}^N \#(I_n) \abs{D_n}.
\end{equation}
Together, \eqref{g_n_2}, \eqref{g_n_3}, and \eqref{g_n_8} prove that
\begin{equation}\label{g_n_1}
t^2 \lambda_G(t) \le  \pi \sum_{T_e} d_{i,k,n}^2 \left(
1-e^{-2\tau_k} \right) + \pi \sum_{n=1}^N \#(I_n) \abs{D_n},
\end{equation}
where $\#(I_n)$ is the cardinality of $I_n$.\\\\
Step 3.

In this step we will utilize the $\eta$ inequalities \eqref{g_n_9} and \eqref{g_n_10} to show that the energy excess, $F_{\varepsilon}(u,A) - \pi D \left( \log{\frac{r}{\varepsilon D}} -C\right),$ controls the first term on the right side of \eqref{g_n_1}.  To begin we modify an argument from the beginning of the proof of Theorem \ref{energy_bound}.   Define $V$ to be the union of the balls in $\mathcal{G}(s+\sigma)$.  Then, copying \eqref{e_b_3}, we can bound
\begin{equation}\label{g_n_11}
\begin{split}
 &F_{\varepsilon}^r(u,A,V) = \frac{1}{2} \int_{V} \rho^2 \abs{\nabla_A v}^2 + \frac{1}{2\varepsilon^2} (1-\rho^2)^2 + \abs{\nabla \rho}^2 + r^2(\curl{A})^2 \\
& \ge F_{\varepsilon}(\rho,V) + \int_0^{\frac{1}{2}} 2t \left( \frac{1}{2} \int_{V\backslash \omega_t} \abs{\nabla_A v}^2\right)dt + \frac{r^2}{8}\int_V (\curl{A})^2   \\
& + \int_{\frac{1}{2}}^{1-\delta} 2t \left( \frac{1}{2} \int_{V\backslash \omega_t} \abs{\nabla_A v}^2 + \frac{r^2}{2}\int_V (\curl{A})^2  \right)dt.
\end{split}
\end{equation}
For $t \in [0,1/2]$ the inclusions
\begin{equation}
 V \backslash \omega_t \supseteq V \backslash \omega_{1/2} \supseteq V \backslash \omega_{1/2}^{3/2}
\end{equation}
hold, and hence
\begin{equation}\label{g_n_11_0}
\begin{split}
  \int_0^{\frac{1}{2}} 2t \left( \frac{1}{2} \int_{V\backslash \omega_t} \abs{\nabla_A v}^2\right)dt
  &\ge \int_0^{\frac{1}{2}} 2t \left( \frac{1}{2} \int_{V\backslash \omega_{1/2}^{3/2}} \abs{\nabla_A v}^2 \right)dt \\
  & = \frac{1}{8} \int_{V\backslash \omega_{1/2}^{3/2}} \abs{\nabla_A v}^2.
\end{split}
\end{equation}
We now use \eqref{e_b_11} and \eqref{e_b_14} from Theorem \ref{energy_bound} to bound
\begin{equation}\label{g_n_12}
\begin{split}
&\int_{1/2}^{1-\delta} 2t \left( \frac{1}{2} \int_{V \backslash \omega_t} \abs{\nabla_A v}^2  + \frac{r^2}{2} \int_{V}(\curl{A})^2 \right)dt + \frac{3}{4} F_{\varepsilon}(\rho,V) \\
&\ge \pi D \left(\frac{3}{4} \log{\frac{r}{\varepsilon D}}-C  \right).
\end{split}
\end{equation}
Here we have used $D = \sum_{n=1}^N D_n$.  Assembling the bounds \eqref{g_n_11}, \eqref{g_n_11_0}, and \eqref{g_n_12} produces the bound
\begin{equation}\label{g_n_13}
\begin{split}
&F_{\varepsilon}^r(u,A,V) - \pi D \left(\log{\frac{r}{\varepsilon D}}-C \right) \\
& \ge \frac{1}{4}\left( F_{\varepsilon}(\rho,V) + \frac{1}{2} \int_{V \backslash \omega_{1/2}^{3/2}} \abs{\nabla_A v}^2 + \frac{r^2}{2} \int_V (\curl{A})^2  - \pi D \log{\frac{r}{\varepsilon D}}  \right).
\end{split}
\end{equation}

The argument in \eqref{e_b_8_0} shows that
\begin{equation}\label{g_n_14}
F_{\varepsilon}(\rho,V) - \pi D \left( \log{\frac{r}{\varepsilon D}}-C \right) \ge
\pi D \left( \log{\frac{r}{r_{\Omega_{\varepsilon}}(\omega_{1/2}^{3/2})}} -C \right).
\end{equation}
In order to use the logarithm terms they must be translated into the new notation.  Recalling \eqref{f_b_21} and changing the constant $C$ (larger but still universal), we see that
\begin{equation}\label{g_n_15}
\begin{split}
&\log{\frac{r}{r_{\Omega_{\varepsilon}}(\omega_{1/2}^{3/2})}} -C  = \log{\frac{r}{r(\mathcal{B}_0)}} + \log{\frac{3r(\mathcal{B}_0)}{16 r_{\Omega_{\varepsilon}}(\omega_{1/2}^{3/2})}} \\
&\le \log{\frac{r}{r(\mathcal{B}_0)}} + \log{\frac{r(\mathcal{C}(\sigma))}{r(\mathcal{C}_0)}}
 = \sum_{k=1}^{K} \tau_k.
\end{split}
\end{equation}
Combining \eqref{g_n_13} - \eqref{g_n_15} and again changing the constant, we arrive at
\begin{equation}\label{g_n_16}
\begin{split}
&F_{\varepsilon}^r(u,A,V) - \pi D \left(\log{\frac{r}{\varepsilon D}}-C \right) \\
& \ge \frac{1}{4}\left( \frac{1}{2} \int_{V \backslash \omega_{1/2}^{3/2}} \abs{\nabla_A v}^2 + \frac{r^2}{2}\int_V (\curl{A})^2 - \pi D \sum_{k=1}^K \tau_k   \right).
\end{split}
\end{equation}

We now translate the term on the right side of inequality \eqref{g_n_16} into the new notation and break it into two parts according to whether the generation is before or after generation $k_n$.  Indeed,
\begin{equation}\label{g_n_17}
\begin{split}
&\frac{1}{2} \int_{V \backslash \omega_{1/2}^{3/2}} \abs{\nabla_A
v}^2 + \frac{r^2}{2}\int_V (\curl{A})^2 -\pi D \sum_{k=1}^K \tau_k
\ge  \frac{1}{2} \sum_{T_e} \int_{A_{i,k,n}} \abs{\nabla_A v}^2 \ - \pi \sum_{S_e} \abs{D_n} \tau_k   \\
&+   \frac{1}{2} \sum_{T_l} \int_{A_{i,k,n}} \abs{\nabla_A v}^2 + r^2 (\curl{A})^2 - \pi \sum_{S_l}\abs{D_n} \tau_k
+ \sum_{n=1}^N \sum_{B \in \bar{B}_n \cap \mathcal{G}(t_{\bar{B}_n})} \frac{r^2}{2}\int_{B} (\curl{A})^2.
\end{split}
\end{equation}
For each $\bar{B}_n \in \mathcal{G}(s+\sigma)$ we consider $\bar{B}_n$ to have been grown from $\bar{B}_n \cap \mathcal{G}(t_{\bar{B}_n})$ and apply Corollary \ref{low_growth_old}; summing over $n$ gives
\begin{equation}
 \frac{1}{2} \sum_{T_l} \int_{A_{i,k,n}} \abs{\nabla_A v}^2 + r^2 (\curl{A})^2 \ge \pi D\left(\log{\frac{r}{r(\mathcal{G}(t_{\bar{B}_n}))}} - \log{2} \right).
\end{equation}
Note that if $t_{\bar{B}_n} \ge \sigma$, then
\begin{equation*}
\sum_{k=k_n}^K \tau_k = \log{\frac{r}{r(\mathcal{G}(t_{\bar{B}_n}))}},
\end{equation*}
whereas if $t_{\bar{B}_n}< \sigma$, then
\begin{equation*}
 \sum_{k=k_n}^K \tau_k = \log{\frac{r}{r(\mathcal{B}_0)}}  + \log{\frac{r(\mathcal{G}(\sigma))}{r(\mathcal{G}(t_{\bar{B}_n}))}} = \log{\frac{r}{r(\mathcal{G}(t_{\bar{B}_n}))}} + \log{\frac{3}{8}}
\end{equation*}
since $r(\mathcal{G}(\sigma)) = r(\mathcal{C}(\sigma)) = 3 r(\mathcal{B}_0)/8$ (see item 4 of Proposition \ref{init_balls}).
Then
\begin{equation}\label{g_n_18}
 \frac{1}{2} \sum_{T_l} \int_{A_{i,k,n}} \abs{\nabla_A v}^2 + r^2 (\curl{A})^2 - \pi \sum_{S_l}\abs{D_n} \tau_k  \ge -\pi C D,
\end{equation}
where $C$ is universal.

It remains to control the term corresponding to the early generations:
\begin{equation*}
Q:= \frac{1}{2} \sum_{T_e} \int_{A_{i,k,n}} \abs{\nabla_A v}^2 \ -
\pi \sum_{S_e} \abs{D_n} \tau_k    + \sum_{n=1}^N \sum_{B \in
\bar{B}_n \cap \mathcal{G}(t_{\bar{B}_n})} \frac{r^2}{2}\int_{B}
(\curl{A})^2.
\end{equation*}
We apply Lemma \ref{alt_low_growth_old} to each $B\in \bar{B}_n \cap \mathcal{G}(t_{\bar{B}_n})$ and sum to get
\begin{equation}\label{g_n_22}
 Q \ge \pi \sum_{S_e} \tau_k \left(\frac{2}{3} \sum_{i=1}^{M_{k,n}}  d_{i,k,n}^2 -\abs{D_n} \right).
\end{equation}

In order to control the difference in \eqref{g_n_22} we must now turn to the $\eta$ inequalities for generations before $k_n$.
If $D_n \ge 0$, $1\le k < k_n$, the inequality \eqref{g_n_9} allows us to estimate
\begin{equation}\label{g_n_23}
\begin{split}
  \sum_{i=1}^{M_{k,n}} d_{i,k,n}^2 & \ge \sum_{i=1}^{M_{k,n}} \abs{d_{i,k,n}}
              = \sum_{\substack{1\le i \le M_{k,n}\\d_{i,k,n}\ge0}} d_{i,k,n}
              + \sum_{\substack{1\le i \le M_{k,n}\\d_{i,k,n}<0}} \abs{d_{i,k,n}} \\
            & > (1+\eta) \sum_{\substack{1\le i \le M_{k,n}\\d_{i,k,n}\ge0}} d_{i,k,n}\\
        & \ge (1+\eta)D_n = (1+\eta)\abs{D_n}.
\end{split}
\end{equation}
If $D_n <0,$ we similarly get
\begin{equation*}
\sum_{i=1}^{M_{k,n}} d_{i,k,n}^2 > (1+\eta)\abs{D_n},
\end{equation*}
and so in either case we arrive at the estimate
\begin{equation}\label{g_n_24}
  -\abs{D_n} \ge -\frac{1}{1+\eta} \sum_{i=1}^{M_{k,n}} d_{i,k,n}^2.
\end{equation}
Putting \eqref{g_n_24} into \eqref{g_n_22} then shows that
\begin{equation}\label{g_n_25}
\begin{split}
Q &\ge \pi \frac{2\eta -1}{3(1+\eta)}\sum_{T_e}  \tau_k  d_{i,k,n}^2 \\
&\ge \pi \frac{2\eta -1}{6(1+\eta)}\sum_{T_e}  d_{i,k,n}^2 (1-e^{-2\tau_k}),
\end{split}
\end{equation}
where in the last inequality we have used the fact that
\begin{equation*}
 x \ge \frac{1}{2}(1-e^{-2x}) \text{ for } x\ge 0.
\end{equation*}
Finally, we use \eqref{g_n_16} -- \eqref{g_n_18} and \eqref{g_n_25} to conclude
\begin{equation}\label{g_n_26}
F_{\varepsilon}(u,A,V) - \pi D \left(\log{\frac{r}{\varepsilon D}}-C \right) \ge \pi \frac{2\eta -1}{24(1+\eta)}\sum_{T_e}  d_{i,k,n}^2 (1-e^{-2\tau_k}).
\end{equation}\\\\
Step 4.

In this step we use the $\eta$ inequalities to provide an upper bound for the second term on the right side of \eqref{g_n_1} by bounding $\#(I_n)$ in terms of $\abs{D_n}$ and $\eta$.  Fix $n$ and suppose that $k_n \le k \le K$.  For now take $D_n\ge 0$.  The inequality \eqref{g_n_9} allows us to bound
\begin{equation*}
\sum_{\substack{1\le i \le M_{k,n}\\d_{i,k,n}\ge0}} d_{i,k,n} = D_n + \sum_{\substack{1\le i \le M_{k,n}\\d_{i,k,n}< 0}} \abs{d_{i,k,n}} \le D_n + \eta \sum_{\substack{1\le i \le M_{k,n}\\d_{i,k,n}\ge0}} d_{i,k,n},
\end{equation*}
and so we can conclude that
\begin{equation}\label{g_n_27}
\sum_{\substack{1\le i \le M_{k,n}\\d_{i,k,n}\ge0}} d_{i,k,n} \le \frac{\abs{D_n}}{1-\eta}.
\end{equation}
We can use this estimate to bound $\#(I_n)$.  Each generation in $I_n$ is an effective-merging generation.  As such, the mergings of that generation include at least one ball of nonzero degree merging with another ball of nonzero degree, resulting in a decrease in the number of balls of nonzero degree.  So, the number of effective generations, $\#(I_n)$, is bounded by the number of nonzero degree balls in the $k_n$ generation.  This quantity can then be bounded in terms of $D_n$ and $\eta$.  Indeed,
\begin{equation}\label{g_n_28}
\begin{split}
  \#(I_n) & \le \# \text{ of nonzero degree balls in generation }k_n \\
          & \le \sum_{i=1}^{M_{k_n,n}}\abs{d_{i,k_n,n}}
            = \sum_{\substack{1\le i \le M_{k_n,n}\\d_{i,k_n,n}\ge0}}\abs{d_{i,k_n,n}}
            + \sum_{\substack{1\le i \le M_{k_n,n}\\d_{i,k_n,n}<0}}\abs{d_{i,k_n,n}}\\
          & \le (1+\eta)\sum_{\substack{1\le i \le M_{k_n,n}\\d_{i,k_n,n}\ge0}}d_{i,k_n,n} \\
      & \le \frac{1+\eta}{1-\eta}\abs{D_n}.
\end{split}
\end{equation}
If $D_n < 0$ then \eqref{g_n_10} and a similar argument show that \eqref{g_n_28} still holds.  Hence
\begin{equation}\label{g_n_29}
\pi \sum_{n=1}^N \#(I_n) \abs{D_n} \le \pi \frac{1+\eta}{1-\eta} \sum_{n=1}^N \abs{D_n}^2.
\end{equation}\\\\
Step 5.

We now conclude the proof by combining \eqref{g_n_1}, \eqref{g_n_26}, and  \eqref{g_n_29} to get the inequality
\begin{equation}
t^2 \lambda_G(t) \le \pi \frac{1+\eta}{1-\eta} \sum_{n=1}^N
\abs{D_n}^2 + \frac{24(1+\eta)}{2\eta -1}\left(
F_{\varepsilon}^r(u,A,V) - \pi D \left(\log{\frac{r}{\varepsilon
D}}-C \right)  \right).
\end{equation}
Using Lemma \ref{l2_inf_normability} and switching back to our original notation then proves \eqref{g_n_30}.
\end{proof}

\section{Jerrard's construction}
In the above results we have modified and improved
 the vortex ball
construction of Sandier, introduced in \cite{sa}, and presented in
an updated form in \cite{ss_book}.  The purpose of this section is
to show that the methods of this paper can be applied equally well
to the other version of the vortex ball construction, developed by
Jerrard in \cite{j}.  The two constructions are not at all
dissimilar, so it is no surprise that the above methods still
work.  For completeness, though, we highlight the differences in
the two constructions and outline the modifications necessary to
make the above ideas work with  Jerrard's construction.  In the
interest of brevity we discuss only the case without magnetic
field.

There are three main differences between the ball construction
employed above and that of \cite{j}.  The Jerrard construction
grows finite collections of disjoint balls from an initial small
collection to a final large collection, employing mergings when
grown balls become tangent.  However, a collection of disjoint
balls $\{ B_i \}$ is not grown uniformly, as we grow them above,
but instead according to the parameter
\begin{equation*}
 s = \min_{i} \frac{r_i}{\abs{d_i}},
\end{equation*}
where $d_i = \deg(u,\partial B_i)$ and $r_i$ is the radius of $B_i$.  There is no guarantee that this parameter is uniform throughout the collection (hence the minimum in the definition of $s$), and as a result, only balls for which the minimum $s$ is achieved are grown.  Note that as a ball is grown without merging, its degree does not vary, so increasing $s$ amounts to increasing the radius of the ball.  Moreover, for the subcollection of balls in $\{B_i\}$ that achieve $s$, if we write $s^{new}$ for the increased parameter and $r_i^{new}$ for the increased radii, we see that
\begin{equation*}
 \frac{s^{new}}{s} = \frac{r_i^{new}}{d_i}\frac{d_i}{r_i} = \frac{r_i^{new}}{r_i},
\end{equation*}
and so all of the annuli formed by deleting the old balls from the new ones have the same conformal type.  The use of this parameter causes trouble above since $r(\mathcal{B}(t)) \neq e^t r(\mathcal{B}_0)$.

The second major difference in the two methods is in how they pass from lower bounds on circles, which in both methods are most conveniently calculated by estimating $\frac{1}{2}\int_{\partial B(a,r)} \abs{\nabla v}^2$
from below, to lower bounds of $\frac{1}{2}\int\abs{\nabla u}^2$ on annuli and balls.  Above we employ the co-area formula in Lemma \ref{co_area_subset} and in \eqref{e_b_3} of Theorem \ref{energy_bound} to accomplish this.  The Jerrard method writes $u = \rho v$, with $\rho = \abs{u}$, and expands the energy as
\begin{equation*}
 \frac{1}{2} \int_{\partial B(a,r)} \abs{\nabla u }^2 + \frac{1}{2\varepsilon^2}(1-\abs{u}^2)^2 = \frac{1}{2} \int_{\partial B(a,r)} \abs{\nabla \rho}^2 + \frac{1}{2\varepsilon^2}(1-\rho^2)^2 + \frac{1}{2} \int_{\partial B(a,r)} \rho^2 \abs{\nabla v}^2.
\end{equation*}
Lemmas 2.4 and 2.5 of \cite{j} then show that
\begin{equation*}
 \frac{1}{2} \int_{\partial B(a,r)} \rho^2 \abs{\nabla v}^2 \ge \pi \frac{m^2 d^2}{r},
\end{equation*}
 and
\begin{equation*}
 \frac{1}{2} \int_{\partial B(a,r)} \abs{\nabla \rho}^2 + \frac{1}{2\varepsilon^2}(1-\rho^2)^2 \ge \frac{1}{c\varepsilon} (1-m)^2,
\end{equation*}
where $c$ is a universal constant and  $m = \min\{1,\inf\limits_{\partial B(a,r)} \rho\}$.  These two bounds are combined with the energy expansion to find
\begin{equation*}
 \frac{1}{2} \int_{\partial B(a,r)} \abs{\nabla u }^2 + \frac{1}{2\varepsilon^2}(1-\abs{u}^2)^2 \ge \inf_{m \in[0,1]} \left( \pi \frac{m^2 d^2}{r}+ \frac{1}{c\varepsilon} (1-m)^2 \right) =: \lambda_\varepsilon(r,d).
\end{equation*}
One readily verifies that $\lambda_\varepsilon(r,d) \ge \lambda_\varepsilon(r/\abs{d},1)$ and that
\begin{equation}\label{jerrard_0}
\lambda_\varepsilon(r,1) = \frac{\pi}{r + c\varepsilon \pi}.
\end{equation}
The function $\Lambda_\varepsilon(s) = \int_0^s \lambda_\varepsilon(r,1) dr = \pi \log(1+ \frac{s}{c\varepsilon \pi})$ is then introduced, and lower bounds on annuli are calculated by integrating on circles:
\begin{equation*}
\begin{split}
 \frac{1}{2} \int_{B(a,r_1) \backslash B(a,r_0)} \abs{\nabla u }^2 + \frac{1}{2\varepsilon^2}(1-\abs{u}^2)^2 &\ge
 \int_{r_0}^{r_1} \lambda_\varepsilon(r,d) dr \ge \abs{d} \int_{r_0/\abs{d}}^{r_1/\abs{d}} \lambda_\varepsilon(r,1)dr\\ &=\abs{d}(\Lambda_\varepsilon(r_1/\abs{d}) - \Lambda_\varepsilon(r_0/\abs{d})).
\end{split}
\end{equation*}
Note that this bound justifies the use of $s = r/d$ as the growth parameter.

The third major difference is in the nature of the lower bounds.  The method above produces lower bounds on the total collection of balls but can not say much about the energy content of any given ball in the collection.  Because of its use of the $\Lambda_\varepsilon$ function, which only depends on the parameter $s$, the Jerrard construction can localize the lower bounds to each ball in the collection.  In particular, Proposition 4.1 of \cite{j}, the analogue of our Theorem \ref{energy_bound}, shows that there exists a $\sigma_0$ such that for any $0 \le \sigma \le \sigma_0$
there exists a collection of disjoint balls $\{B_i\}$ with radii $r_i$ and degrees $d_i$ such that
\begin{equation*}
 \frac{1}{2} \int_{B_i \cap \Omega} \abs{\nabla u }^2 + \frac{1}{2\varepsilon^2}(1-\abs{u}^2)^2 \ge \frac{r_i}{s} \Lambda_\varepsilon(s),
\end{equation*}
where $s = \min\limits_{i} (r_i/\abs{d_i}) \in [\sigma/2,\sigma]$.  In particular this implies that
\begin{equation*}
 \frac{1}{2} \int_{B_i \cap \Omega} \abs{\nabla u }^2 + \frac{1}{2\varepsilon^2}(1-\abs{u}^2)^2 \ge \pi \abs{d_i} \log\left(1+ \frac{\sigma}{2c\pi \varepsilon} \right).
\end{equation*}
The proof of this result follows from a line of reasoning similar to what led to Theorem \ref{energy_bound}.  An initial collection of balls $\{B_i\}$ with radii $r_i \ge \varepsilon$ is found (Proposition 3.3 of \cite{j}) that covers $\{ \abs{u} \le 1/2\}$ and on which
\begin{equation}\label{jerrard_1}
 \frac{1}{2} \int_{B_i \cap \Omega} \abs{\nabla u }^2 + \frac{1}{2\varepsilon^2}(1-\abs{u}^2)^2 \ge c_0 \frac{r_i}{\varepsilon} \ge \frac{r_i}{s} \Lambda_\varepsilon(s),
\end{equation}
where $c_0$ is a universal constant.  These balls are then grown into the final balls according to the ball growth lemma, but used with the parameter $s$ as the growth parameter.  It is then shown that growth and merging preserves the form of the lower bound \eqref{jerrard_1}, i.e. that if the bound holds with one value of $s$, it also holds with the value of $s$ obtained after growing the balls. \medskip

In order to utilize our completion of the square trick to extract the new term we must only present a modification of Lemma \ref{low_boundary} designed to work with the minimization of $m$ trick.  The rest of the argument follows from simple modifications of the arguments in \cite{j} that we will only sketch.

\begin{lem}\label{jerrard_mod}
 Let $B = B(a,r)$ and suppose that $u: \partial B \rightarrow \mathbb{C}$ is $C^1$ and that $\abs{u}>c\ge0$ on $\partial B$.  Write $u = \rho v$ with $\rho = \abs{u}$, and define the function
 \begin{equation}
  G = \frac{d m^2 \beta}{\rho^2 r}\tau,
 \end{equation}
where $d = \deg(u,\partial B)$, $m = \min\{1,\inf\limits_{\partial B(a,r)} \rho\}$, $\tau$ is the oriented unit tangent vector field to $\partial B$, and $\beta \in [0,1]$ is a constant.  Then
\begin{equation}\label{j_mod_0}
 \frac{1}{2} \int_{\partial B} \rho^2 \abs{\nabla v}^2 \ge \frac{1}{2} \int_{\partial B} \rho^2 \abs{\nabla v- G}^2 + \pi \frac{d^2 m^2\beta}{r}.
\end{equation}
\end{lem}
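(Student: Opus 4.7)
The plan is to mimic the completion-of-the-square trick of Lemma \ref{low_boundary}, in the setting without magnetic field but with $\rho$ variable on the circle. Writing $v = e^{i\varphi}$ locally on $\partial B$ and putting $g = \frac{d m^2 \beta}{\rho^2 r}$ so that $G = g\tau$, the tangential derivatives satisfy $\abs{\nabla v}^2 = (\partial_\tau \varphi)^2$ and $\abs{\nabla v - G}^2 = (\partial_\tau \varphi - g)^2$ on $\partial B$ (interpreting the latter expression via the covariant formalism used throughout the paper, exactly as in Lemma \ref{polar}).

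The core computation is then just to expand
\[
\int_{\partial B}\rho^2 (\partial_\tau \varphi - g)^2 \,ds = \int_{\partial B}\rho^2 (\partial_\tau \varphi)^2 \,ds - 2\int_{\partial B}\rho^2 g\,\partial_\tau \varphi \,ds + \int_{\partial B} \rho^2 g^2 \,ds.
\]
The key observation, and the entire reason for placing the factor $\rho^{-2}$ in the definition of $G$, is that the product $\rho^2 g = \frac{d m^2 \beta}{r}$ is a constant on $\partial B$. The degree identity $\int_{\partial B} \partial_\tau \varphi \,ds = 2\pi d$ then makes the middle term collapse to the exact value $\frac{4\pi d^2 m^2 \beta}{r}$, which is where the positive gain comes from.

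The only delicate point, and the main obstacle, is controlling the residual $\int_{\partial B}\rho^2 g^2 \,ds = \frac{d^2 m^4 \beta^2}{r^2}\int_{\partial B} \rho^{-2}\,ds$. Here the definition $m = \min\{1,\inf_{\partial B}\rho\}\le \rho$ is crucial: the pointwise bound $\rho^{-2} \le m^{-2}$ yields $\int_{\partial B} \rho^2 g^2 \,ds \le \frac{2\pi d^2 m^2 \beta^2}{r}$, with just the right $m^2$ factor to combine cleanly with the gain from the cross term. Assembling the three pieces and dividing by $2$ gives
\[
\frac{1}{2}\int_{\partial B} \rho^2 \abs{\nabla v}^2 \ge \frac{1}{2}\int_{\partial B} \rho^2 \abs{\nabla v - G}^2 + \frac{\pi d^2 m^2 \beta (2-\beta)}{r},
\]
and since $\beta(2-\beta) \ge \beta$ for $\beta \in [0,1]$, this dominates the target inequality \eqref{j_mod_0}. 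Note that the hypothesis $\abs{u}>c\ge 0$ is used only to ensure that the phase $\varphi$ can be lifted and that $\rho$ stays away from $0$ so that $g$ is well-defined; all the quantitative content is absorbed by $m$.
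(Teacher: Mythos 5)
Your proof is correct and is essentially the same as the paper's: expand the square, use that $\rho^2 g$ is constant so the cross term reduces to the degree, bound the residual $\int_{\partial B}\rho^{-2}$ via $\rho\ge m$, and finish with $\beta(2-\beta)\ge\beta$ for $\beta\in[0,1]$. The paper presents the same computation more tersely (attributing the expansion to ``arguing as in Lemma \ref{low_boundary}''), but there is no substantive difference in method.
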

\begin{proof}
 Arguing as in Lemma \ref{low_boundary}, we find that
 \begin{equation}
  \frac{1}{2} \int_{\partial B} \rho^2 \abs{\nabla v}^2 = \frac{1}{2} \int_{\partial B} \rho^2 \abs{\nabla v- G}^2 + 2\pi d \frac{dm^2 \beta}{r} - \frac{d^2 m^4 \beta^2}{2r^2}\int_{\partial B} \frac{1}{\rho^2}.
 \end{equation}
Then the definition of $m$ implies that
\begin{equation}
 2\pi d \frac{dm^2 \beta}{r} - \frac{d^2 m^4 \beta^2}{2r^2}\int_{\partial B} \frac{1}{\rho^2} \ge \pi \frac{d^2 m^2}{r}(2\beta - \beta^2) \ge \pi \frac{d^2 m^2 \beta}{r},
\end{equation}
where the last inequality follows from the fact that $0 \le \beta \le 1$.  This proves the result.

\end{proof}

This result may be used in conjunction with Lemma 2.5 of \cite{j}, borrowing half of that energy to absorb into the novel term, to arrive at the lower bound
\begin{equation}
 \frac{1}{2} \int_{\partial B} \abs{\nabla u }^2 + \frac{1}{2\varepsilon^2}(1-\abs{u}^2)^2 \ge
 \frac{1}{4} \int_{\partial B} \abs{\nabla u - iuG}^2 +  \inf_{m \in[0,1]} \left( \pi \frac{m^2 d^2\beta}{r}+ \frac{1}{c\varepsilon} (1-m)^2 \right) .
\end{equation}
In order to gain the ability to localize the estimates in each ball, we must have that $\lambda_\varepsilon(r,d)$ is independent of $\beta$ and that the homogeneity inequality $\lambda_\varepsilon(r,d) \ge \lambda_\varepsilon(r/\abs{d},1)$ holds.  The first of these requires us to set $\beta =1$ in the above, which precludes the special choice of $\beta$ needed to make Proposition \ref{g_norm} work.  The second requires us to throw away the $d^2$ terms in favor of $\abs{d}$.  So, there is a tradeoff: the price we pay for localizing the estimates is a loss of control of the $L^{2,\infty}$ norm of the auxiliary function $G$.  This choice leads to the lower bound on circles
\begin{equation}
 \frac{1}{2} \int_{\partial B} \abs{\nabla u }^2 + \frac{1}{2\varepsilon^2}(1-\abs{u}^2)^2 \ge
 \frac{1}{4} \int_{\partial B} \abs{\nabla u - iuG}^2 + \lambda_\varepsilon(r/\abs{d},1),
\end{equation}
where $\lambda_\varepsilon$ is as defined in \eqref{jerrard_0}, but with the universal constant doubled, and $G= \frac{d m^2}{\rho^2 r}\tau$.  The bound on circles leads to bounds on annuli by integrating; indeed,
\begin{equation}
 \begin{split}
  \frac{1}{2} \int_{B(a,r_1) \backslash B(a,r_0)} \abs{\nabla u }^2 + \frac{1}{2\varepsilon^2}(1-\abs{u}^2)^2 &\ge
  \frac{1}{4} \int_{B(a,r_1) \backslash B(a,r_0)} \abs{\nabla u - iuG}^2
 \\ & +\abs{d}(\Lambda_\varepsilon(r_1/\abs{d}) - \Lambda_\varepsilon(r_0/\abs{d})),
\end{split}
\end{equation}
where now we take $G(x)= \frac{d m^2}{\rho(x)^2 \abs{x-a}}\tau(x)$.

Now, to achieve a bound of the form \eqref{jerrard_1} but with the $L^2$ difference with $iuG$ included, we use Lemma \ref{jerrard_mod} in the Jerrard construction.  As above, we define the function $G$ to vanish in the initial collection of balls obtained in Proposition 3.3 of \cite{j}.  Then we trivially modify \eqref{jerrard_1} to read (since $G=0$ there)
\begin{equation}
\begin{split}
  \frac{1}{2} \int_{B_i \cap \Omega} \abs{\nabla u }^2 + \frac{1}{2\varepsilon^2}(1-\abs{u}^2)^2 & \ge  \frac{c_0 r_i}{2 \varepsilon}  + \frac{1}{4} \int_{B_i \cap \Omega} \abs{\nabla u}^2 \\
  & \ge \frac{r_i}{s} \Lambda_\varepsilon(s)+ \frac{1}{4} \int_{B_i \cap \Omega} \abs{\nabla u - iuG}^2.
\end{split}
\end{equation}
We then take $G$ to vanish in all of the non-annular regions of the balls constructed in Proposition 4.1 of \cite{j}.  The estimates in these balls, like the original Sandier estimates, discard the energy of the non-annular regions.  We retain it and rewrite it as a $\int \abs{\nabla u - iuG}^2$ term, which is possible since $G=0$ there.  Then, adding in the extra $G$ term in the annular regions, we arrive at the modification.

\begin{prop}
There exists a $\sigma_0$ such that for any $0 \le \sigma \le \sigma_0$
there exists a collection of disjoint balls $\{B_i\}$ with radii $r_i$ and degrees $d_i$ such that
\begin{equation*}
 \frac{1}{2} \int_{B_i \cap \Omega} \abs{\nabla u }^2 + \frac{1}{2\varepsilon^2}(1-\abs{u}^2)^2 \ge \frac{1}{4} \int_{B_i \cap \Omega} \abs{\nabla u - iuG}^2 + \frac{r_i}{s} \Lambda_\varepsilon(s),
\end{equation*}
where $s = \min\limits_{i} (r_i/\abs{d_i}) \in [\sigma/2,\sigma]$.  In particular this implies that
\begin{equation*}
 \frac{1}{2} \int_{B_i \cap \Omega} \abs{\nabla u }^2 + \frac{1}{2\varepsilon^2}(1-\abs{u}^2)^2 \ge \frac{1}{4} \int_{B_i \cap \Omega} \abs{\nabla u - iuG}^2+\pi \abs{d_i} \log\left(1+ \frac{\sigma}{2c\pi \varepsilon} \right).
\end{equation*}
\end{prop}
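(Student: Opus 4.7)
The plan is to mirror Jerrard's proof of Proposition~4.1 in \cite{j} step by step, replacing every circle lower bound by the improved version in Lemma~\ref{jerrard_mod} so as to retain the extra $L^{2}$ term involving $G$. The function $G\colon\Omega\to\mathbb{R}^{2}$ is constructed incrementally alongside the ball construction: in the initial covering balls and in every non-annular region produced by a merging we set $G\equiv 0$, while in each annulus swept out by the growth process we take $G(x)=\frac{d\,m^{2}}{\rho^{2}\,|x-a|}\tau(x)$, with $a$ the center of the growing ball, $d$ its (constant) degree during that growth interval, $\tau$ the oriented unit tangent, $\rho=|u|$, and $m=m(|x-a|)=\min\{1,\inf_{\partial B(a,|x-a|)}\rho\}$.

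First, apply Proposition~3.3 of \cite{j} to obtain an initial disjoint collection $\{B_{i}^{0}\}$ with radii $r_{i}^{0}\ge\varepsilon$ covering $\{|u|\le 1/2\}$ and satisfying $F_{\varepsilon}(u,B_{i}^{0}\cap\Omega)\ge c_{0}r_{i}^{0}/\varepsilon$. Since $G\equiv 0$ on these balls, $|\nabla u-iuG|=|\nabla u|$ there, and one may split off $\tfrac{1}{4}\int|\nabla u|^{2}$ from the Dirichlet half of $F_{\varepsilon}$; absorbing the universal constants into $\Lambda_{\varepsilon}$ and adjusting $\sigma_{0}$ so that $\tfrac{c_{0}}{2}r_{i}^{0}/\varepsilon\ge(r_{i}^{0}/s)\Lambda_{\varepsilon}(s)$ for $s\in[\sigma/2,\sigma]$ yields the claimed initial form
\begin{equation*}
F_{\varepsilon}(u,B_{i}^{0}\cap\Omega)\ \ge\ \tfrac{1}{4}\int_{B_{i}^{0}\cap\Omega}|\nabla u-iuG|^{2}+\frac{r_{i}^{0}}{s}\Lambda_{\varepsilon}(s).
\end{equation*}

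Now run Jerrard's growth process with parameter $s=\min_{i}(r_{i}/|d_{i}|)$, phase by phase. Inside each growth phase the degrees are constant, and for every circle $\partial B(a,r)$ traversed, Lemma~\ref{jerrard_mod} with $\beta=1$ combined with half of Lemma~2.5 of \cite{j} (keeping the other half for the modulus part) gives
\begin{equation*}
\tfrac12\!\int_{\partial B(a,r)}\!|\nabla u|^{2}+\tfrac{1}{2\varepsilon^{2}}(1-|u|^{2})^{2}\ \ge\ \tfrac14\!\int_{\partial B(a,r)}\!|\nabla u-iuG|^{2}+\lambda_{\varepsilon}(r/|d|,1),
\end{equation*}
where we have used $\lambda_{\varepsilon}(r,d)\ge\lambda_{\varepsilon}(r/|d|,1)$ to discard the $d^{2}$ factor (at the cost of doubling the universal constant inside $\lambda_{\varepsilon}$). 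Integrating in $r$ across each annulus and summing yields the annular bound
\begin{equation*}
\tfrac12\!\int_{A\cap\Omega}\!|\nabla u|^{2}+\tfrac{1}{2\varepsilon^{2}}(1-|u|^{2})^{2}\ \ge\ \tfrac14\!\int_{A\cap\Omega}\!|\nabla u-iuG|^{2}+|d|\bigl(\Lambda_{\varepsilon}(r_{1}/|d|)-\Lambda_{\varepsilon}(r_{0}/|d|)\bigr).
\end{equation*}
At a merging event, the pre-merged balls are absorbed into an enclosing ball; on the difference set we set $G\equiv 0$, so that the Dirichlet energy on that set—discarded in Jerrard's original bookkeeping—can instead be split as $\tfrac14\int|\nabla u|^{2}=\tfrac14\int|\nabla u-iuG|^{2}$ plus a nonnegative remainder. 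Jerrard's monotonicity argument showing that the form $F_{\varepsilon}\ge(r_{i}/s)\Lambda_{\varepsilon}(s)$ is preserved under a merging then carries through with the extra $\tfrac14\int|\nabla u-iuG|^{2}$ contributions simply accumulating, yielding the per-ball bound of the proposition; the second displayed estimate follows from $\Lambda_{\varepsilon}(s)\ge\pi\log(1+s/(2c\pi\varepsilon))$ and $r_{i}/s\ge|d_{i}|$.

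The main obstacle is purely bookkeeping: the function $G$ must be coherently defined across initial balls, annuli of each growth phase, and the non-annular cores created by every merging, and one must verify that on the non-annular regions $G\equiv 0$ (so splitting off a quarter of the Dirichlet energy as $\tfrac14\int|\nabla u-iuG|^{2}$ is legitimate) while on each annulus $G$ agrees with the formula above (so Lemma~\ref{jerrard_mod} applies verbatim on every traversed circle). This is analogous to the construction of $G$ that led to Proposition~\ref{final_balls}, but is simpler here because the parameterization by $s=r/|d|$ keeps the degree constant along every growth interval and the estimate is localized to a single ball $B_{i}$, so no subtle cross-ball interactions arise.
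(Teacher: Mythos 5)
Your proposal mirrors the paper's argument step for step: you define $G$ to vanish on the initial covering balls and on the non-annular cores created by mergings, apply Lemma~\ref{jerrard_mod} with $\beta=1$ together with a borrowed half of Lemma~2.5 of \cite{j} on each traversed circle, integrate to get the annular bound, and then observe that the otherwise-discarded Dirichlet energy on the non-annular regions can be retained and rewritten as $\tfrac14\int|\nabla u - iuG|^2$ since $G=0$ there, so Jerrard's monotonicity bookkeeping carries through unchanged. This is precisely the route the paper takes (Section~7), so the approaches are essentially identical.
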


\newpage



\begin{thebibliography}{}
 \bibitem{bbh} F. Bethuel, H. Brezis, F. H\'{e}lein, \textit{Ginzburg-Landau
  Vortices},  Birkha\"user, Boston, 1994.


 \bibitem{grafakos} L. Grafakos, \textit{Classical and Modern Fourier
 Analysis},
 Pearson, Upper Saddle River, 2004.


   \bibitem{j}
R. L. Jerrard,
  Lower Bounds for Generalized Ginzburg-Landau Functionals,
   \emph{SIAM J.  Math. Anal.} {\bf 30}  (1999), no. 4, 721-746.

\bibitem{jspirn} R. L. Jerrard, D. Spirn,  Improved Jacobian
Estimates for the Ginzburg-Landau Functional,  \textit{Indiana Univ. Math. J.} \textbf{56} (2007), no. 1, 135--186.


\bibitem{lr} F. H. Lin, T. Rivi\`ere,
  A Quantization Property for Static Ginzburg-Landau Vortices,
  {\em Comm. Pure Appl. Math.} {\bf 54} (2001), no. 2, 206--228.

\bibitem{sa}
E. Sandier,
  Lower Bounds for the Energy of Unit Vector Fields and Applications,
  {\em J. Funct. Anal.} {\bf 152} (1998), no. 2, 379--403;
  Erratum, Ibid, {\bf 171} (2000), no. 1, 233.

 \bibitem{ss_glmin} E. Sandier, S. Serfaty,
Ginzburg-Landau Minimizers Near the First
  Critical Field Have Bounded Vorticity,  {\it
  Calc. Var. Partial Differential Equations} \textbf{17} (2003), no. 1, 17-28.



 \bibitem{ss_book} E. Sandier, S. Serfaty,
 \textit{Vortices in the Magnetic Ginzburg-Landau Model}, Birkha\"user,
 Boston,
  2007.




 \bibitem{stein_intro} E. Stein, G. Weiss,
  \textit{Introduction to Fourier Analysis on Euclidean Spaces},
   Princeton University Press, Princeton, 1971.

  \bibitem{p2} I. Tice, Lorentz Space Estimates and Jacobian
  Convergence for the Ginzburg-Landau Energy with Applied Magnetic Field,
    preprint, 2007, [arXiv:math.AP/0705.1114].   


\end{thebibliography}
\end{document}